\let\cite\citep
\theoremstyle{definition}
\DeclareMathOperator{\spec}{\mathrm{spec}}
\newtheorem{assumption}{Assumption}
\newtheorem{lemma}{Lemma}
\newtheorem{corollary}{Corollary}
\newtheorem{definition}{Definition}
\newtheorem{proposition}{Proposition}
\newtheorem{remark}{Remark}
\title{
The Impact of Social Value Orientation on \\ Nash Equilibria of Two Player Quadratic Games 
}
\author{Dan Calderone, Meeko Oishi}
\begin{document}
\maketitle

\begin{abstract}
We consider two player quadratic games in a cooperative framework known as {\em social value orientation}, motivated by the need to account for complex interactions between humans and autonomous agents in dynamical systems.  Social value orientation is a framework from psychology, that posits that each player incorporates the other player's cost into their own objective function, based on an individually pre-determined degree of cooperation.  The degree of cooperation determines the weighting that a player puts on their own cost relative to the other player's cost.  We characterize the Nash equilibria of two player quadratic games under social value orientation by creating expansions that elucidate the relative difference between this new equilibria (which we term the SVO-Nash equilibria) and more typical equilibria, such as the competitive Nash equilibria, individually optimal solutions, and the fully cooperative solution.    
Specifically, each expansion parametrizes the space of cooperative Nash equilibria as a family of one-dimensional curves where each curve is computed by solving an eigenvalue problem.  We show that both bounded and unbounded equilibria may exist.  For equilibria that are bounded, we can identify bounds as the intersection of various ellipses; for equilibria that are unbounded, we characterize conditions under which unboundedness will occur, and also compute the asymptotes that the unbounded solutions follow.  We demonstrate these results in trajectory coordination scenario modeled as a linear time varying quadratic game.
\end{abstract}

\section{Introduction}

Cooperation between humans and autonomous agents is of growing importance in a wide variety of dynamical systems, and is premised on the notion that humans place social expectations on all agents they interact with, including autonomous agents \cite{klein2024modeling,brown2024trash}. Consider, for example, the cooperation that is required in driving, such as in merging, changing lanes, coordination at stop signs, and other routine encounters.  Autonomous vehicles that cannot accommodate the need for cooperation with other vehicles can create confusion and even become safety hazards.  
However, within a cooperative framework, it may be important for agents to plan for contingencies.  Consider defensive driving, the mainstay of driver education, which recommends that drivers plan for worst-case scenarios to avoid unsafe interactions with other nearby drivers, while simultaneously abiding by rules and courtesies that represent an inherent notion of cooperation to navigate the roadways.  In essence, drivers plan for the worst case {\em within a cooperative framework}.  

We seek in this paper to formalize the notion of competition within a cooperative framework.  We are motivated by fundamental challenges in human-autonomy interaction that require sensitivity of the autonomous system to human expectations for cooperation, while simultaneously offering protection against the inherent unpredictability of human action and intent.  
We model cooperation through social value orientation 
\cite{liebrand1988ring, mcclintock1989social}, a framework from psychology that quantifies the degree to which an individual values another's benefit.  We use the premise of social value orientation to create coupled cost functions in a two-player game, and then evaluate the Nash equilibrium as a function of the degree of cooperation each player maintains towards the other player.  This results in a novel game theoretic framework, in which players seek a competitive equilibrium using costs that have an explicitly stated degree of cooperation.

Social value orientation was introduced to the robotics and controls community \cite{schwarting2019social}  as a means to predict human behavior in driving scenarios.  The authors inferred the degree of cooperation based on trajectories of human-driven vehicles in highway merging and in unprotected left turns, and then optimized the autonomous vehicle's trajectory via Nash and Stackelberg frameworks based on predictions of the trajectory of the human-driven vehicle.  It has also been used recently to design coordination policies between autonomouse vehicles (AVs). 
 Toghi, et al. \cite{toghi2021cooperative} develop a reinforcement learning framework for cooperative planning among AVs where agents receive rewards based on a linear combination of their own performance and their opponents performances.  This weighted combination that mimics social value orientation can be tuned to improve collective performance.

A variety of other techniques have been investigated in the literature to facilitate human-robot cooperation. Some work has focused on making human and robot actions predictable to each other.  Driggs-Campbell, et al. 
\cite{driggs2016communicating} identify nonverbal cooperation cues in human driving behavior and design autonomous control schemes that are more predictable to humans.  In 
\cite{driggs2017integrating},  Driggs-Campbell et al. 
empirical determine reachable sets of human drivers in order to accurately predict human driving behavior such as lane changing and use these reachable sets in an optimization based framework to generate trajectories for autonomous vehicles that mimic human behavior.

Some work has treated human intentions as a latent state that robots need to predict.  
Bajcsy, et al. 
\cite{bajcsy2017learning}
develop schemes for robots to learn human intentions in a physical optimal control setting by modeling the human state as hidden variables in a dynamical system that a robot can then estimate online and optimize accordingly. 
Tian et al. \cite{tian2022safety}
develop game theoretic models for humans and treat human rationality as an unobserved latent state that the robot has to infer. By inferring this state the robot can reduce conservatism while still maintaining safety in traffic scenarios.  
Sadigh et al.
\cite{sadigh2016planning,sadigh2018planning}
model the impact of an AV on a human driver as an underactuated dynamic system. The AV learns to estimate the human's objective via inverse reinforcement learning and then plans their optimal trajectory taking into account the human model (in a Stackelberg game framework) improving efficiency for the autonomous vehicle.  In practice the AV can end up learning to manipulate the human driver, cutting them off or blocking them in some cases or even leaving space to signal the human to go.

Other approaches have focused on designing automation actions that are legible to humans.  
Dragan et al. 
\cite{dragan2013legibility,dragan2014integrating,dragan2015effects} 
use optimal control techniques to design robot behaviors that specifically predictable and "legible" to a human in order to facilitate cooperation. 
Other techniques adjust centralized costs for autonomous agents to improve legibility by humans. \cite{kruse2012legible} 
and show that legibility of autonomous behavior can arise from rewarding efficiency in reinforcement learning in joint human robot tasks. 
\cite{busch2017learning} 
Some of these same approaches are useful not only for cooperation, but also for manipulation and deceipt Dragan \cite{dragan2015deceptive}, Sadigh
\cite{sadigh2016planning,sadigh2018planning}.






In contrast with these efforts, we seek to provide a formal analysis of how social value orientation shifts Nash-equilibria in a two-player games \cite{lambert2018quadratic}.  
Nash equilibium models 
\cite{nash1950non}\cite{bacsar1998dynamic} are ubiquitous in modeling interactions between agents in a variety of scenarios.  In optimal control contexts, Nash equilibria have been used to model competitive interactions between autonomouse agents such as autonomous racing 
\cite{liniger2019noncooperative}
\cite{zhu2024sequential} as well as collaborative coordination between 
humans and robots in manipulation and sensorimotor tasks \cite{chackochan2017modelling}. 
In particular, in competitive optimal control problems linear-quadratic games both in the open-loop and closed-loop settings have become a staple of modeling competitive interactions \cite{zou2020framework}. 
Many natural learning schemes also converge to Nash equilibria. \cite{frihauf2011nash} and 
Nash equilibrium models are often used in learning contexts for coordination between autonomous agents. \cite{li2016framework} 
in human robot interactions
\cite{music2020haptic}
\cite{an2023cooperative},
and even to understand human coordination behavior
\cite{lindig2021nash}.  

In our work, we show how the Nash equilibrium changes with different social value orientations of the agents. We show that unexpected behaviors can arise in even simple games, including unbounded equilibria.  This has significant implications for the design of cooperative autonomy.  The main contribution of this paper is 1) presentation of a theoretical framework for competition within an articulated level of cooperation for LQ games, 2) characterization of the equilibria of this game, and 3) analysis of these equilibria in relationship to equilibria from other related games.

The paper is structured as follows. In Section \ref{sec:svo}, we introduce the social value orientation in the context of two player games. In Section 
\ref{sec:nashbasics}, we review basic two player quadratic games and computation of Nash equilibria and other equilibrium points.  
We then introduce our main results in Section \ref{sec:svonash} in the form of several expansions that parametrize how the Nash equilibrium shifts under different social value orientations.  In Sections \ref{sec:contraction} and \ref{sec:blowups}, we then analyze the set of possible SVO-Nash equilibria in the cooperative case. 
 Specifically, we focus on two cases: 1) where the SVO-equilibria stay bounded and 2) where the SVO-equilibria can go unbounded.  In Section \ref{sec:contraction}, we give conditions for the first case in terms of ellipsoidal bounds on the equilibria.  We also detail some limited conditions on the costs where we can ensure that these bounded conditions are met (in Section \ref{sec:considerations}).  In Section \ref{sec:blowups}, we give conditions when the SVO-equilibria can go unbounded and give formulas for the asymptotic behavior.  In Section \ref{sec:openloopLQ}, we use our formulations to analyze the behavior of two-agents in a simple open-loop linear quadratic trajectory planning problem with collision avoidance. 
In Section \ref{sec:conclusion}, we give some further discussion, directions for future work, and concluding thoughts.  Section \ref{sec:appendices} contains Appendices with most of the proofs and extensions. 
Illustrative figures are included throughout with a focus on low-dimensional cases to build intuition.

\section{Social Value Orientation}

\label{sec:svo}

The social value orientation (SVO) of an agent in a two player game is a measure of how much an agents considers their opponents cost along with their own when they optimize.  It is usually captured as an angle $\theta_i \in [-\pi,\pi]$ that define a new cost for each player by taking a weighted combination of the player's own cost with their opponent's cost.  In a two player game with costs $J_i:\mathbb{R}^{d_i} \rightarrow \mathbb{R}$ for $i \in \{1,2\}$, if player $i$ has a social value orientation of $\theta_i$, then they optimize a new cost 
\begin{align}
\mathbf{J}_i(u;\theta_i) & = \cos(\theta_i) J_i(u) + 
\sin(\theta_i)
J_{-i}(u) 
\label{eq:mainSVOcosts}
\end{align}
We will use $\theta=(\theta_1,\theta_2)$ to denote the SVO values for the two players.  Note here that for player $i$, $\cos(\theta_i)$ gives the weight on their own cost and $\sin(\theta_i)$ gives the weight on the opponents cost.
The values of the angles $(\theta_1,\theta_2)$ determine different types of social behavior as illustrated in Fig. \ref{fig:svoregions}.   

In general, we can assume the SVO angle for each player is in the range $\theta_i \in [-\pi/2,\pi/2]$ since it would be abnormal for a player to want to increase their own cost (ie. $\theta_i \geq \pi/2$ or $\theta_i \leq -\pi/2$). In this paper, we will focus on the case where agents are trying to help each other to some extent, ie. $\theta_i \in [0,\pi/2]$, and we will look at how the Nash equilibrium shifts under different SVO values for these different levels of cooperation.

\begin{figure}
    \centering
\includegraphics[width=0.7\textwidth]{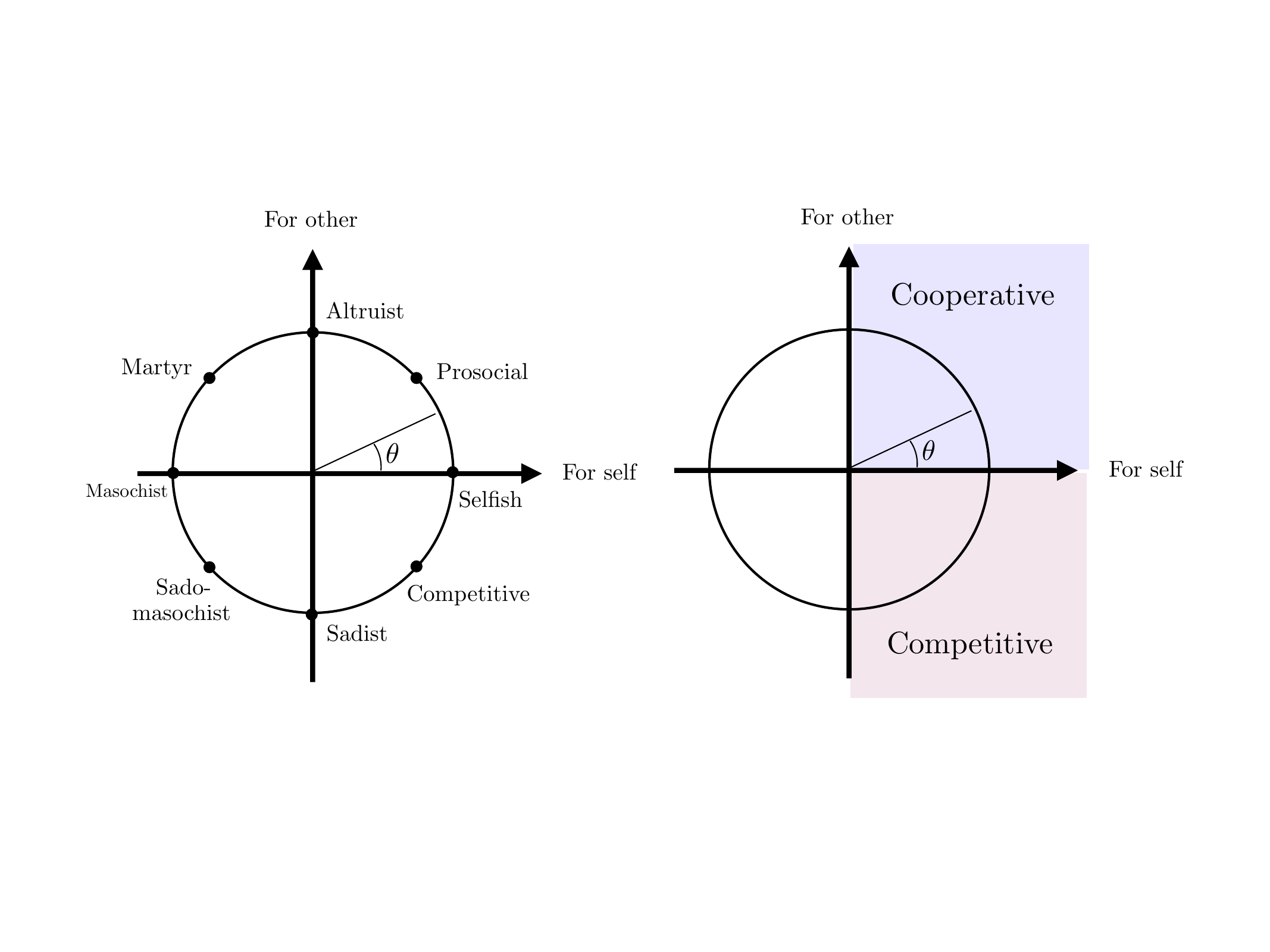}
    \caption{Social value orientation (SVO) region illustration with various types of personas labeled for context. SVO values in the upper-right orthant $\theta_i \in [0,\pi/2]$ define cooperative behaviors from selfish, $\theta_i = \pi/2$ to truly altruistic, $\theta_i = \pi/2$.  SVO values in the lower-right orthant $\theta_i \in [-\pi/2,0]$ define competitive behaviors from selfish, $\theta_i = 0$ to purely competitive, $\theta_i = -\pi/2$. }
    \label{fig:svoregions}
\end{figure}

\section{Nash Equilibria of Quadratic Games}

\label{sec:nashbasics}

Consider a two-player quadratic game where the action vectors for each player are $u_i \in \mathbb{R}^{d_i}$ for $i \in \{1,2\}$ with joint action vector denoted $u= (u_1,u_2) \in \mathbb{R}^d$ where $d=d_1+d_2$.  Each player's cost can be written as 
\begin{align}
J_i(u) = \tfrac{1}{2}u^\top M_i u + c_i^\top u 
\label{eq:origcosts}
\end{align}
with cost matrices $M_1,M_2 \in \mathbb{R}^{d \times d}$ 
\begin{align*}
M_1 & = \begin{bmatrix}
A_1 & B_1^\top \\
B_1 & D_1 
\end{bmatrix},
\quad 
M_2  = \begin{bmatrix}
D_2 & B_2 \\
B_2^\top & A_2
\end{bmatrix}, 
\end{align*}
and 
$c_1^\top = \big[ \ a_1^\top \ \ b_1^\top \big] \in \mathbb{R}^d $ and 
$c_2^\top = \big[ \ b_2^\top \ \ a_2^\top \big] \in \mathbb{R}^d $.  
The cost parameters are divided into the following subblocks. 
\begin{itemize}
    \item $A_i \in \mathbb{R}^{d_i \times d_i}$: Square matrices that give the quadratic dep. of each player's cost on their own actions
    \item $D_i \in \mathbb{R}^{d_{-i} \times d_{-i}}$: Square matrices that give the quadratic dep. of each player's cost on their opponent's action.
    \item $B_i \in \mathbb{R}^{d_{-i} \times d_i}$: 
    Possibly non-square matrices that give the bilinear dep. of each player's cost on both actions. 
    \item $a_i \in \mathbb{R}^{d_i}$: Linear dep. of the costs on ea. player's action
    \item $b_i \in \mathbb{R}^{d_{-i}}$:  Linear dependence on the opponent's action. 
\end{itemize}
Note the dimensions of each submatrix and the different organizations of $M_1$,$M_2$ and $c_1,c_2$.  This class of quadratic games includes open-loop linear-quadratic (LQ) games for linear time varying (LTV) dynamical systems.  We give further details on these in Section \ref{sec:openloopLQ}.  

We now list three possible sets of assumptions on $M_1$ and $M_2$ and their subblocks that impose different restrictions on the class of games.  Note that the game formulation is valid any of these cases.
\begin{assumption}
\label{assump:3cases}
Assumptions on $M_1,M_2$. (from least to most restrictive)
\begin{itemize}
    \item \textbf{Assump-1a:} $A_1,A_2 \succ 0$. This is the minimal assumption for the initial Nash equilibrium problem to be well-posed.  Many quadratic games (including the LQ games discussed later) will only satisfy this assumption and not the other two with $D_1,D_2$ indefinite. 
    \item \textbf{Assump-1b:} $A_1,A_2,D_1,D_2 \succ 0 $. This is the minimal assumption for the SVO-equilibrium problem (introduced in Sec. \ref{sec:svonash}) to be well-posed for all values of $(\theta_1,\theta_2) \in (0,\pi/2) \times (0,\pi/2)$.  
    \item \textbf{Assump-1c:} $M_1,M_2 \succ 0$.  This assumption is quite strong and requires the other two.
\end{itemize}
\end{assumption}
For the bulk of the paper, we will
assume the least restrictive assumption ($\textbf{Assump-1a}$) and will use the other two for reference and to make some specific arguments.

\subsection{Nash Equilibria: Basics}

The Nash equilibria of these quadratic games, $\mathbf{u}_{\text{N}} \in \mathbb{R}^d$, is computed by solving the first order optimality conditions for each player simultaneously, $\Big.\tfrac{\partial J_1}{\partial u_1} = 0$ and $\Big.\tfrac{\partial J_2}{\partial u_2} = 0$.
\begin{align*}
\begin{bmatrix}
u_1 \\ u_2
\end{bmatrix}^\top 
\begin{bmatrix}
A_1 \\ B_1
\end{bmatrix} + a_1^\top  = 0, \qquad 
\begin{bmatrix}
u_1 \\ u_2
\end{bmatrix}^\top 
\begin{bmatrix}
B_2 \\ A_2
\end{bmatrix} + a_2^\top  = 0
\end{align*}
Combined, this gives the system of equations 
\begin{align}
\begin{bmatrix}
\tfrac{\partial J_1}{\partial u_1} & \tfrac{\partial J_2}{\partial u_2} 
\end{bmatrix}
= 
u^\top \mathbf{M} + \mathbf{a}^\top = 0
\label{eq:equilibcond}
\end{align}
where 
\begin{align*}
\mathbf{M} = \begin{bmatrix}
A_1 & B_2 \\ B_1 & A_2 
\end{bmatrix}, \quad 
\mathbf{a}^\top = 
\begin{bmatrix}
a_1^\top & a_2^\top 
\end{bmatrix}
\end{align*}
with solution $\mathbf{u}_\text{N}^\top  = - \mathbf{a}^\top \mathbf{M}^{-1}$
under mild invertibility assumptions which will generally be satisfied since $A_1,A_2 \succ 0$ (except for particular $B_1,B_2$).  
Note that the partial derivatives are each with respect to only the player's own actions. Note also that several of the cost matrices/vector, specifically $D_1,D_2$ and $b_1,b_2$ do not show up in the calculation.  Along with the Nash equilibria, we can also compute several other important optimal points including the optimal for each player individually, $\mathbf{u}_1,\mathbf{u}_2 \in \mathbb{R}^d$,
\begin{align*}
\mathbf{u}_1^\top = - c_1^\top M_1^{-1}, \quad 
\mathbf{u}_2^\top = - c_2^\top M_2^{-1},
\end{align*}
the socially optimal or ``prosocial" solution, $\mathbf{u}_{\text{S}} \in \mathbb{R}^d$
\begin{align*}
\mathbf{u}_\text{S}^\top = - (c_1 + c_2)^\top \Big(M_1 + M_2\Big)^{-1}
=- (\mathbf{a} + \mathbf{b})^\top \Big(\mathbf{M} + \mathbf{N}\Big)^{-1}
\end{align*}
and a solution, we can call the \emph{altruistic Nash}, $\mathbf{u}_\text{A} \in \mathbb{R}^d$ where each player optimizes with respect to their opponents cost and we solve the system of equations.

\begin{align}
\begin{bmatrix}
\tfrac{\partial J_2}{\partial u_1} & \tfrac{\partial J_1}{\partial u_2} 
\end{bmatrix}
u^\top \mathbf{N} + \mathbf{b}^\top
= 0
\label{eq:equilibcond2}
\end{align}
where
\begin{align*}
\mathbf{N} = \begin{bmatrix}
D_2 & B_1^\top \\ B_2^\top & D_1
\end{bmatrix}, \quad 
\mathbf{b}^\top  = 
\begin{bmatrix}
b_1^\top & b_2^\top 
\end{bmatrix}
\end{align*}
with solution $\mathbf{u}_\text{N}^\top  = - \mathbf{b}^\top \mathbf{N}^{-1}$.  We will make the following assumption about the different matrices
\begin{assumption}
$M_1,M_2, \mathbf{M}, \mathbf{N}$ are invertible. 
\end{assumption}

We illustrate a basic quadratic game in 2D (where $d=2$ and $d_1=d_2=1$) in Fig. \ref{fig:nashillustration} by drawing the level sets of the costs and the various optimality curves as well as each of the optimal points listed above. 

\begin{remark}Note that each optimality curve gives the minimum of one of the costs when we restrict motion to either the $u_1$ or $u_2$ direction (and the other variable is fixed). Geometrically, the optimality curves pass through points where the tangents to the level sets point in either the $u_1$ (horizontal) or $u_2$ (vertical) directions.  Each equilibrium point, except the prosocial optimum, is at the intersection of two optimality curves. One can show that the prosocial optimum lies at the point where the level sets are parallel to each other. 
We will use this graphic for exposition through out the paper and to give a flavor for the results.  We note at the outset, however, that it is a particularly well-behaved example.  For example, the costs have ellipsoidal level sets. 
 This is the case only under $\textbf{Assump-1c}$, $M_1 \succ 0$ and $M_2 \succ 0$.   In general under \textbf{Assump-1a} or \textbf{Assump-2a}, the level sets may be saddle-shaped.
\end{remark}

\begin{figure}
    \centering    \includegraphics[width=0.6\textwidth]{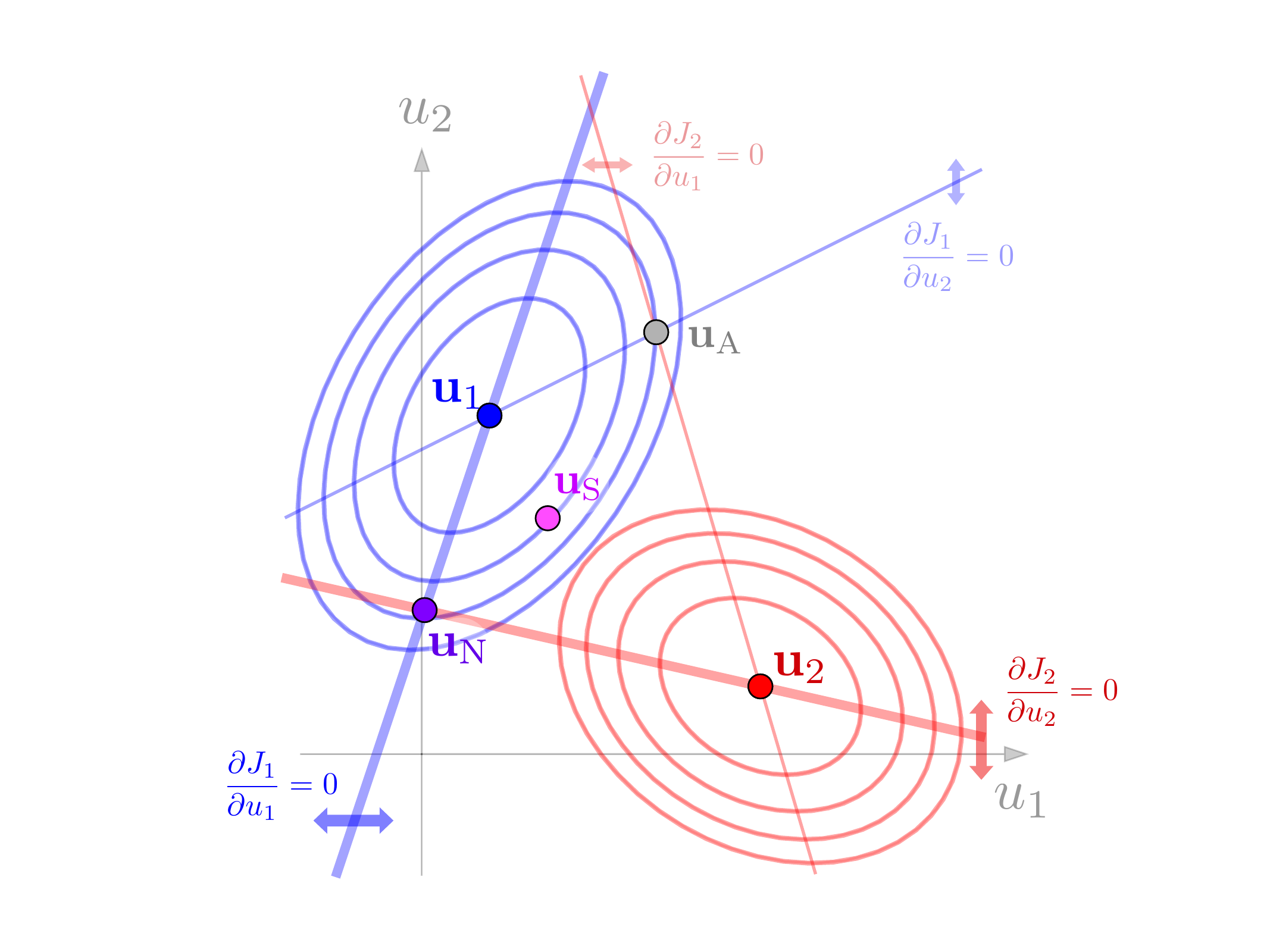}
    \caption{Illustration of equilibrium points for a 2-player scalar action quadratic game. Level sets and optimality curves for players 1 and 2 are shown in blue and red respectively.  Each optimality curve minimizes a cost function w.r.t motion in particular direction.  Note that the Nash eq. $\mathbf{u}_\text{N}$, the altruistic Nash $\mathbf{u}_\text{A}$, and the optima for players 1 $\mathbf{u}_1$ and 2 $\mathbf{u}_2$ are located at the intersections of various optimality curves. The socially optimal solution $\mathbf{u}_\text{S}$ is located at the point where the level sets of the costs are parallel. }
    \label{fig:nashillustration}
\end{figure}

\begin{figure}
    \centering    \includegraphics[width=0.49\textwidth]
    {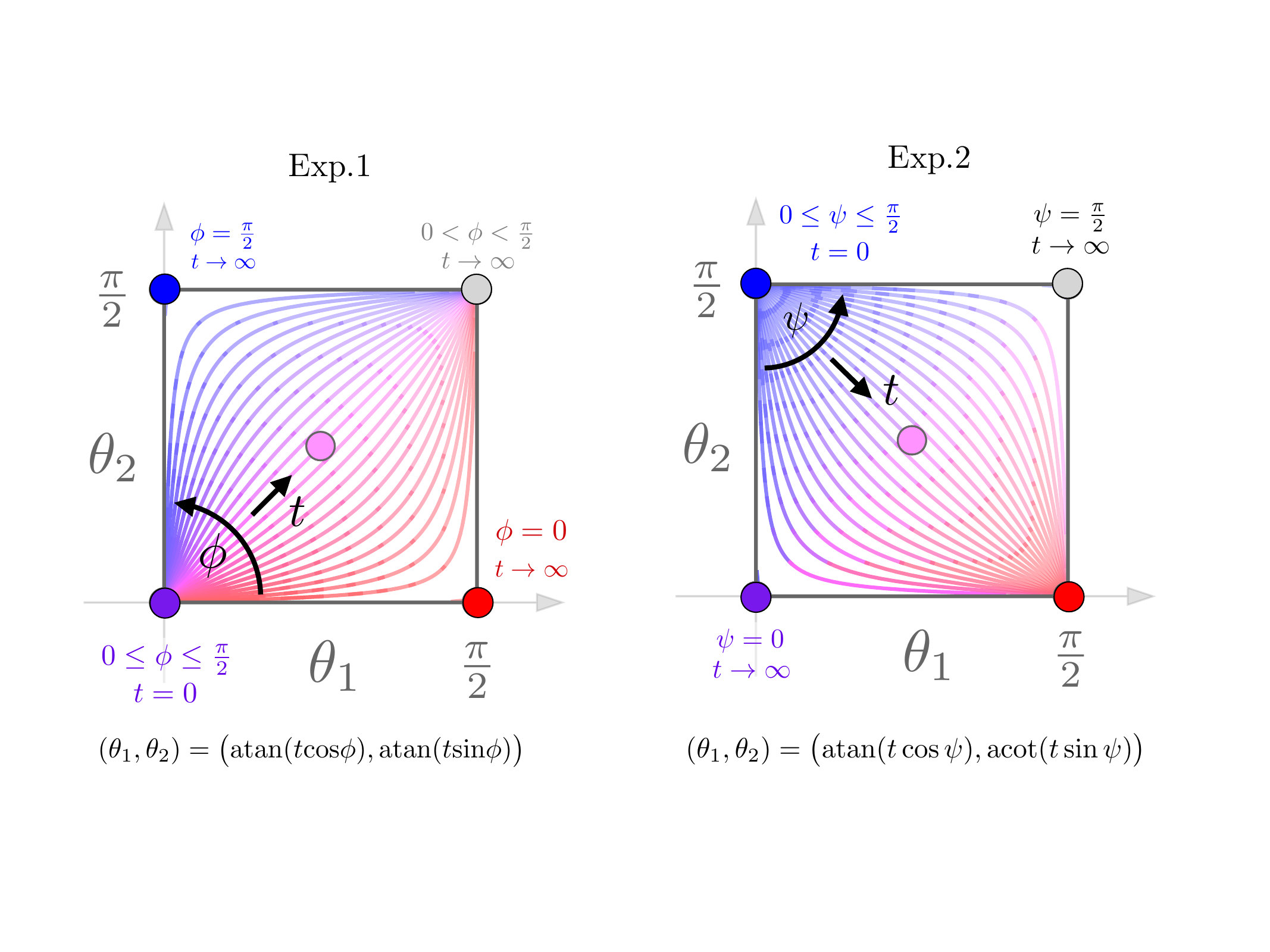}
    \includegraphics[width=0.49\textwidth]
    {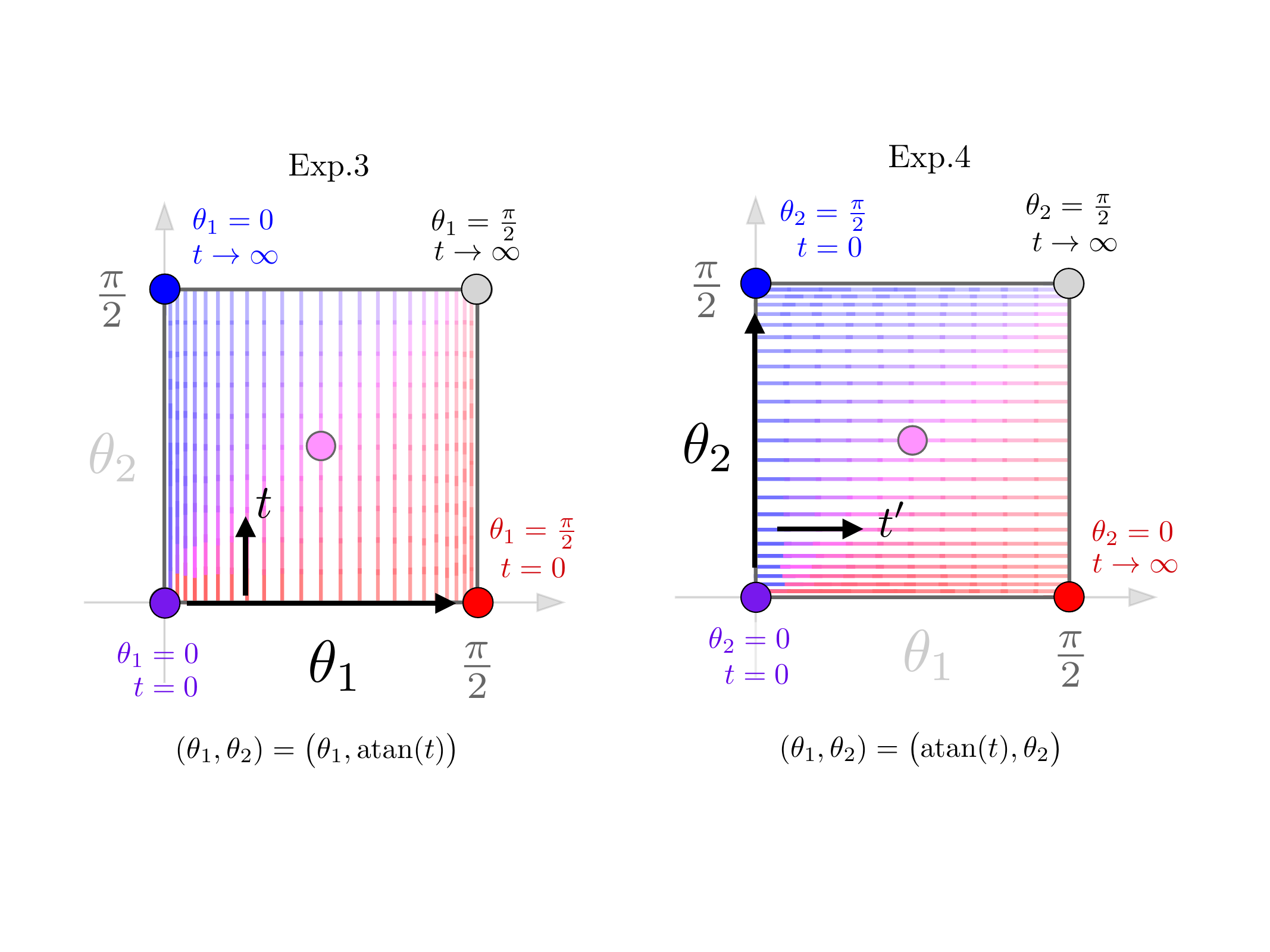}
    \caption{Alternative parametrizations of the $\theta \in [0,\pi/2] \times [0,\pi/2]$ space in terms of coordinates $(\phi,t)$ and $(\psi, t )$ for Exp.1 and Exp 2. respectively.  Note the limit points for different values of $\phi$ and $\psi$ as $t \to \infty$ and $ t  \to \infty$.}
    \label{fig:thetaparams}
\end{figure}





\section{SVO Nash Equilibria}

\label{sec:svonash}

For different values of the SVO, we can compute how the Nash equilibria will shift by resolving the optimality conditions with the new costs, that is by solving the system of equations
\begin{align*}
    \frac{\partial \mathbf{J}_1}{\partial u_1} = 0, \quad 
    \frac{\partial \mathbf{J}_2}{\partial u_2} = 0.
\end{align*}
These \emph{SVO-Nash equilibria} which we will denote $\mathbf{u}_\theta \in \mathbb{R}^d$ can be computed as 
\begin{align}
\mathbf{u}_{\theta}^\top 
& = 
-
\begin{bmatrix}
\text{c}\theta_1 a_1 + \text{s}\theta_1  b_2 \\ \text{c}\theta_2  a_2+ \text{s}\theta_2  b_1
\end{bmatrix}^\top  
\times
\begin{bmatrix}
\text{c}\theta_1 A_1 + 
\text{s}\theta_1  D_2 
& 
\text{c}\theta_2 B_2 + \text{s}\theta_2 B_1^\top  \\ \text{c}\theta_1 B_1+\text{s}\theta_1  B_2^\top & \text{c}\theta_2 A_2 + \text{s} \theta_2 D_1 
\end{bmatrix}^{-1} 
\label{eq:basicequilibinv}
\end{align}
where we have used $c\theta_i$ and $s \theta_i$ as shorthand for $\cos \theta_i$ and $\sin \theta_i$. We will refer to $\mathbf{u}_\theta$ as the \emph{SVO-Nash-equilibrium} or \emph{SVO-equilibrium} for a SVO-value or cooperation level of $\theta$.

\subsection*{Well-Posed SVO-Costs}

Note that for the game to be well-pose under the SVO transformation, the quadratic dependence of a player's cost on their own action must remain positive definite under different values of the SVO.  This amounts to the conditions 
\begin{align}
\cos\theta_i A_i + \sin \theta_i D_{-i} \succ 0, \quad \text{for} \quad i \in \{1,2\}
\label{eq:wellposed}
\end{align}
Note that $A_i \succ 0$ if the original Nash equilibrium is well-posed (\textbf{Assump-1a}).  If $D_{-i} \succ 0$ (\textbf{Assump-1b}), then the SVO-equilibrium problem is also well-posed for any $\theta_i \in (0,\pi/2)$.  When $D_{-i}$ is indefinite  there will be some cutoff $\bar{\theta}_i$ above which player $i$'s optimization is no longer well-posed, ie. when the left-hand side (LHS) of \eqref{eq:wellposed} becomes indefinite.  For the rest of the paper, we will assume $\theta_i \leq \bar{\theta}_i$.  These bounds on $\theta_i$ will have minimal effect on the proof techniques.  We will not explicitly state this assumption in the theorems though we will indicate this cutoff value when it is relevant in various figures.

To elucidate how the equilibrium shifts, we can expand the SVO-equilibrium computation in 
Eq. \ref{eq:basicequilibinv}
in several different ways.  In total, we will give four different expansions which we will denote the ``Nash-expansion" (\textbf{E1}), 
the ``Player Opt-expansion" (\textbf{E2}), the ``$\theta_1$-expansion" (\textbf{E3}), and the ``$\theta_2$-expansion (\textbf{E4}). 
The bulk of the paper will focus on the first two expansions ($\textbf{E1}$ and $\textbf{E2}$) since they prove more useful analytically.  Details of the remaining two expansions
($\textbf{E3}$ and $\textbf{E4}$) are given in the Appendix \ref{app:exp34} and \ref{app:proofs34}. 

\subsection{Coordinate Transformations}
In order to give these expansions, we introduce four coordinate transforms on the $(\theta_1,\theta_2) \in (0,\pi/2) \times (0,\pi/2)$ space in the following proposition.  
\begin{proposition}
\label{prop:coords}
The following coordinate transformations are bijections between $(\theta_1,\theta_2) \in (0,\pi/2) \times (0,\pi/2)$ and the domain given. 
\begin{align}
\textbf{E1:} \quad 
(\theta_1, \theta_2) & = \Theta_\phi(\phi,t)
= 
\Big(
\text{atan} \ t \cos \phi, 
\text{atan} \ t \sin \phi 
\Big) \label{eq:coords1}
\end{align}
for $(\phi, t ) \in 
(0,\pi/2) 
\times (0,\infty)$.
\begin{align}
\textbf{E2:} \quad 
(\theta_1, \theta_2) & = \Theta_\psi(\psi,t)
= 
\Big(
\text{atan} \ t \cos \psi, 
\text{acot} \ t \sin \psi
\Big) \label{eq:coords2}
\end{align}
for $(\psi, t ) \in 
(0,\pi/2) 
\times (0,\infty)$. 
\begin{align}
\textbf{E3:} \quad 
(\theta_1, \theta_2) & = \Theta_1(\theta_1,t)
= 
\big(\theta_1,\text{atan} \ t
\big) \label{eq:coords3}
\end{align}
for $(\theta_1, t ) \in 
(0,\pi/2) 
\times (0,\infty)$. 
\begin{align}
\textbf{E4:} \quad 
(\theta_1, \theta_2) & = \Theta_\psi(\psi,t)
= 
\big(\text{atan} \ t,\theta_2
\big) \label{eq:coords4}
\end{align}
for $(\theta_2,t) \in 
(0,\pi/2) 
\times (0,\infty)$. 
\end{proposition}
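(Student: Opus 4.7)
The plan is to recognize that each of the four maps factors through well-known elementary bijections, then assemble an explicit inverse to verify bijectivity. Expansions \textbf{E3} and \textbf{E4} are essentially trivial: $\text{atan}$ restricted to $(0,\infty)$ is a diffeomorphism onto $(0,\pi/2)$, so holding $\theta_1$ or $\theta_2$ fixed and mapping $t \mapsto \text{atan}\,t$ yields the required bijection with the second factor, giving the identity on the first factor. So the real content is in \textbf{E1} and \textbf{E2}.

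For \textbf{E1}, I would first apply $\tan$ coordinatewise to turn the claim into a statement about the auxiliary map $(\phi, t) \mapsto (t\cos\phi,\, t\sin\phi)$ from $(0,\pi/2) \times (0,\infty)$ into $(0,\infty)^2$. This is simply the polar-to-Cartesian transformation restricted to the open first quadrant, which is a classical bijection (its inverse sends $(x,y)$ to $(\text{atan}(y/x),\, \sqrt{x^2+y^2})$). Since $\tan : (0,\pi/2) \to (0,\infty)$ is itself a bijection, composing gives the explicit inverse
\begin{align*}
(\theta_1, \theta_2) \ \longmapsto \ \Big(\,\text{atan}\tfrac{\tan\theta_2}{\tan\theta_1}\,,\ \sqrt{\tan^2\theta_1 + \tan^2\theta_2}\,\Big),
\end{align*}
and checking this is the inverse of $\Theta_\phi$ is a direct substitution.

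For \textbf{E2}, the same strategy works with one modification: I would apply $\tan$ to the first coordinate and $\cot$ to the second. Since $\cot : (0,\pi/2) \to (0,\infty)$ is also a bijection, the problem again reduces to inverting the polar map $(\psi, t) \mapsto (t\cos\psi,\, t\sin\psi)$ on the open first quadrant. The inverse is then
\begin{align*}
(\theta_1, \theta_2) \ \longmapsto \ \Big(\,\text{atan}\tfrac{\cot\theta_2}{\tan\theta_1}\,,\ \sqrt{\tan^2\theta_1 + \cot^2\theta_2}\,\Big).
\end{align*}

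I do not anticipate a genuine obstacle here; the proposition is a change-of-variables bookkeeping statement. The only place that requires a moment of care is verifying that the image lands in the correct open region (in particular, that $t > 0$ and $\phi, \psi \in (0,\pi/2)$ strictly) — this follows because $\theta_1, \theta_2 \in (0,\pi/2)$ force both $\tan\theta_i$ (and $\cot\theta_i$ for \textbf{E2}) to be strictly positive and finite, so the corresponding point lies in the open first quadrant away from the axes and the origin, and polar coordinates are a diffeomorphism on that set.
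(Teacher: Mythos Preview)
Your proposal is correct and follows essentially the same approach as the paper: exhibit the inverse explicitly and note that the relevant elementary functions ($\tan$, $\cot$, $\text{atan}$) are bijections on the stated open intervals. Your formula $t = \sqrt{\tan^2\theta_1 + \tan^2\theta_2}$ is equivalent to the paper's $t = \tan\theta_1/\cos\phi$ once $\phi = \text{atan}(\tan\theta_2/\tan\theta_1)$ is substituted; your framing via the polar-to-Cartesian map on the open first quadrant is a nice piece of intuition that the paper leaves implicit.
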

\begin{proof}
(See Appendix. \ref{app:coords12})
\end{proof}

These coordinate transformations are illustrated in Fig. \ref{fig:thetaparams}.
As illustrated in the figure, each coordinate transformation allows us to define a family of curves through the SVO value space where each curve is defined by $\phi$, $\psi$, $\theta_1$, and $\theta_2$ respectively and then the variable $t$ moves along the curve.  Each of the curves can be mapped into the action space $u \in \mathbb{R}^n$ by solving a specific eigenvalue problem.  
We detail these for expansions \textbf{E1} and \textbf{E2} in the next two sections.

\subsection{Nash-expansion (\textbf{E1})}



We now rewrite the SVO-Nash equilibria $\mathbf{u}_\theta$ for $\theta = (\theta_1,\theta_2)$ in terms of the new \textbf{E1} coordinates $(\phi,t) = \Theta_\phi(\theta)$.  Each $\phi$ defines a curve 
$
\Gamma_\phi(t): t \in \mathbb{R}_+ \mapsto u \in \mathbb{R}^d
$
and the SVO-equilibria $\mathbf{u}_\theta$ can be represented as a point along that curve. 


\begin{figure}
    \centering
\includegraphics[width=0.65\textwidth]{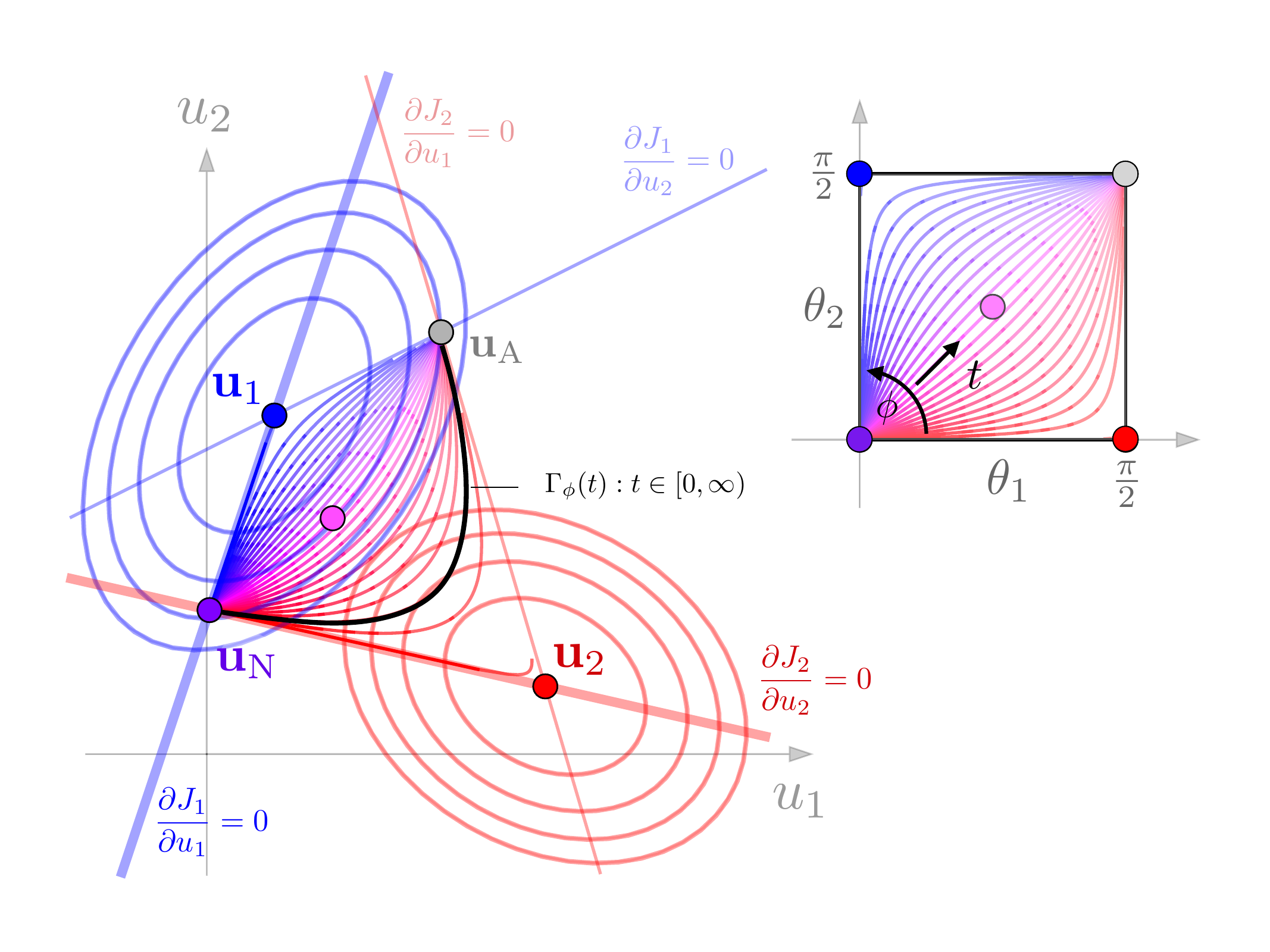}
    \caption{
    (\textbf{E1}) Nash-Expansion: The curves $\Gamma_\phi(t):t \in [0,\infty)$ for $\phi \in (0,\pi/2)$ are shown in color sweeping from $\mathbf{u}_\text{N}$ to $\mathbf{u}_\text{A}$. (Parameter values are given in App. \ref{app:values})}
    \label{fig:nashexp}
\end{figure}
\begin{proposition}[\textbf{E1:} Nash Expansion]
\label{prop:nashexp}
The SVO equilibria $\mathbf{u}_\theta$ 
for $(\theta_1,\theta_2) \in (0,\pi/2) \times (0,\pi/2)$ is given by 
\begin{align}
\mathbf{u}_\theta = 
\Gamma_\phi(t) = 
{\mathbf{u}_{\text{N}}}
+ 
G_\phi(t) 
\Big[{\mathbf{u}_{\text{A}}} -{\mathbf{u}_{\text{N}}}
\Big]
\label{eq:exp1main}
\end{align}
where
$(\phi,t) = \Theta_\phi^{-1}(\theta_1,\theta_2)$ and 
\begin{align}
G_\phi(t)
& = 
\Big[\tfrac{1}{t}\mathbf{M}
H_\phi^{\text{-}1}\mathbf{N}^{\text{-}1} + I \Big]^{-\top} 
=
\sum_i 
      W_{\phi i} 
\Big[
\tfrac{t}{\lambda_{\phi i}+t}
\Big]
V_{\phi i}^\top  
\label{eq:exp1sum} 
\end{align}
with 
$
H_\phi = \textbf{blkdg}
\big(\cos \phi I_{d_1 \times d_1}, \sin \phi I_{d_2 \times d_2}\big)$ where $\mathbf{M}H_\phi^{-1}\mathbf{N}^{-1}$ has eigenvalues $\Lambda_\phi = \{\lambda_{\phi i}\}_{i=1}^d$, right-eigenvectors 
$\{V_{\phi i} \}_{i=1}^d$ and left-eigenvectors 
$\{W_{\phi i}^\top \}_{i=1}^d$.
\end{proposition}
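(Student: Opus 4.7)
The plan is to manipulate the closed-form expression in Eq.~\eqref{eq:basicequilibinv} by factoring out the SVO-dependent scalings, substituting the $(\textbf{E1})$ coordinates, and finally recognizing the resulting matrix as a standard eigenexpansion.

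\textbf{Step 1 (Factor $\mathbf{M}_\theta$).} Introduce the diagonal scalings $C_\theta = \textbf{blkdg}(\cos\theta_1 I_{d_1},\cos\theta_2 I_{d_2})$ and $S_\theta = \textbf{blkdg}(\sin\theta_1 I_{d_1},\sin\theta_2 I_{d_2})$. A column-by-column inspection of the matrix in Eq.~\eqref{eq:basicequilibinv} shows it equals $\mathbf{M} C_\theta + \mathbf{N} S_\theta$, and the associated linear vector is $C_\theta\mathbf{a} + S_\theta\mathbf{b}$ with $\mathbf{a},\mathbf{b}$ as in \eqref{eq:equilibcond}--\eqref{eq:equilibcond2}. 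Thus $\mathbf{u}_\theta^\top = -(C_\theta\mathbf{a}+S_\theta\mathbf{b})^\top(\mathbf{M} C_\theta + \mathbf{N} S_\theta)^{-1}$.

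\textbf{Step 2 (Insert $(\textbf{E1})$).} Under $\tan\theta_1=t\cos\phi$, $\tan\theta_2=t\sin\phi$, we have $S_\theta = t\,H_\phi C_\theta$. Because $H_\phi$ and $C_\theta$ are both block-diagonal with matching block sizes, they commute, so
\begin{equation*}
\mathbf{M} C_\theta+\mathbf{N} S_\theta = (\mathbf{M}+t\mathbf{N} H_\phi)\,C_\theta,\qquad C_\theta\mathbf{a}+S_\theta\mathbf{b} = C_\theta(\mathbf{a}+t H_\phi\mathbf{b}).
\end{equation*}
The $C_\theta$ factors cancel in the product $\mathbf{u}_\theta^\top$, giving the clean form $\mathbf{u}_\theta^\top = -(\mathbf{a}+tH_\phi\mathbf{b})^\top (\mathbf{M}+t\mathbf{N} H_\phi)^{-1}$. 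Both boundary limits are immediate: $t\to 0$ yields $\mathbf{u}_\text{N}^\top = -\mathbf{a}^\top\mathbf{M}^{-1}$ and $t\to\infty$ yields $\mathbf{u}_\text{A}^\top = -\mathbf{b}^\top\mathbf{N}^{-1}$.

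\textbf{Step 3 (Re-center at $\mathbf{u}_\text{N}$).} Writing $\mathbf{M}_t := \mathbf{M}+t\mathbf{N} H_\phi$ and applying the identity $\mathbf{M}^{-1}-\mathbf{M}_t^{-1} = \mathbf{M}^{-1}(t\mathbf{N} H_\phi)\mathbf{M}_t^{-1}$,
\begin{equation*}
(\mathbf{u}_\theta-\mathbf{u}_\text{N})^\top = t\bigl(\mathbf{a}^\top\mathbf{M}^{-1}\mathbf{N}-\mathbf{b}^\top\bigr)H_\phi \mathbf{M}_t^{-1}.
\end{equation*}
The key collapse uses $\mathbf{a}^\top\mathbf{M}^{-1} = -\mathbf{u}_\text{N}^\top$ and $\mathbf{b}^\top = -\mathbf{u}_\text{A}^\top\mathbf{N}$, giving the identity $\mathbf{a}^\top\mathbf{M}^{-1}\mathbf{N}-\mathbf{b}^\top = (\mathbf{u}_\text{A}-\mathbf{u}_\text{N})^\top\mathbf{N}$. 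Hence
\begin{equation*}
(\mathbf{u}_\theta-\mathbf{u}_\text{N})^\top = (\mathbf{u}_\text{A}-\mathbf{u}_\text{N})^\top\,\bigl(t\mathbf{N} H_\phi\mathbf{M}_t^{-1}\bigr).
\end{equation*}
Inverting the rightmost factor yields $(t\mathbf{N} H_\phi\mathbf{M}_t^{-1})^{-1} = (1/t)\mathbf{M} H_\phi^{-1}\mathbf{N}^{-1}+I$, so this factor is precisely $G_\phi(t)^\top$. Transposing recovers Eq.~\eqref{eq:exp1main}.

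\textbf{Step 4 (Eigenexpansion).} Set $Z := \mathbf{M} H_\phi^{-1}\mathbf{N}^{-1}$ and write $Z = V_\phi\Lambda_\phi W_\phi^\top$ in biorthogonal form with $W_\phi^\top V_\phi=I$. Then $[(1/t)Z+I]^{-1} = V_\phi\,\mathrm{diag}\bigl(t/(\lambda_{\phi i}+t)\bigr)W_\phi^\top$, whose transpose is exactly the sum in Eq.~\eqref{eq:exp1sum}.

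The only subtle point is bookkeeping: one must match the ``opponent-linear-term'' vector in the SVO cost with the $\mathbf{b}$ vector appearing in the altruistic-Nash equation \eqref{eq:equilibcond2} so that Step~1 is an equality rather than a permutation; once the conventions are aligned, the remaining algebra in Steps 2--4 is routine.
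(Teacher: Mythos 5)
Your proof is correct and follows essentially the same route as the paper's: reduce Eq.~\eqref{eq:basicequilibinv} to the form $-(\mathbf{a}^\top + t\,\mathbf{b}^\top H_\phi)(\mathbf{M}+t\mathbf{N}H_\phi)^{-1}$ and then re-center at $\mathbf{u}_\text{N}$ via a Woodbury-type identity, which is exactly what the paper's Lemma~\ref{lem:expansions12} does with $H = tH_\phi$ (your resolvent identity in Step~3 is the same algebra in slightly different packaging). If anything, your version is more complete, since you make explicit the $C_\theta$-cancellation and the $t\to 0$, $t\to\infty$ endpoint checks that the paper leaves implicit, and you correctly flag the block-ordering of $\mathbf{b}$ needed for consistency with Eq.~\eqref{eq:equilibcond2}.
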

\begin{proof}
(See Appendix \ref{app:proofs})
\end{proof}
The curves $\Gamma_\phi(t)$ for the scalar action case are illustrated in Fig. \ref{fig:nashexp}.

In the above proposition, we have used the following characterization of the spectrum of $G_{\phi}(t)$ as 
\begin{align}
\text{spec}\Big(G_\phi(t)\Big)
= \Big\{ \Big(1+ \tfrac{\lambda_{\phi i}}{t} \Big)^{-1} \ \Big| \ \lambda_{\phi i} \in \text{spec}\big(\Lambda_{\phi}\big)\Big\}
\label{eq:exp1spec}
\end{align}

Intuitively, the Nash expansion \textbf{E1} writes an SVO Nash as the original Nash equilibrium $\mathbf{u}_\text{N}$ plus a correction term that is determined by transforming the difference between the altruistic Nash and the original Nash $\mathbf{u}_\text{A}-\mathbf{u}_\text{N}$ multiplied by the matrix $G_{\phi}(t)$ which varies with the social value orientation for each player.
\begin{figure}
    \centering
\includegraphics[width=0.65\textwidth]
{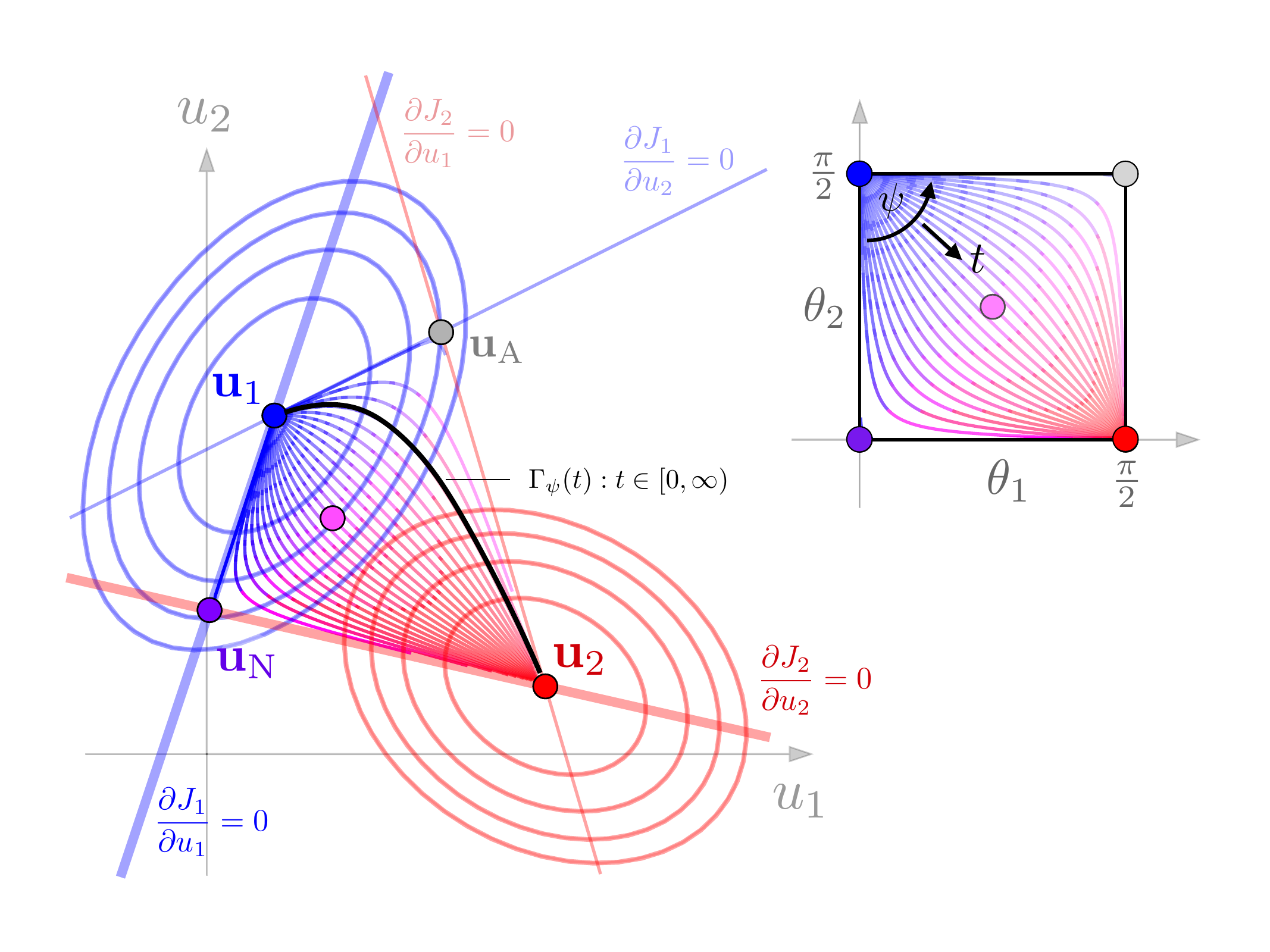}
    \caption{(\textbf{E2}) Player-Optimal-Expansion: The curves $\Gamma_\psi(t):t \in [0,\infty)$ for $\psi \in (0,\pi/2)$ are shown in color sweeping from $\mathbf{u}_1$ to $\mathbf{u}_2$.
    (Parameter values are given in App. \ref{app:values})
    }
    \label{fig:playerexp}
\end{figure}

\subsection{Player-opt expansion (\textbf{E2})}

We now state the Player-opt expansion as well.  In this case, we rewrite the SVO-Nash equilibria $\mathbf{u}_\theta$ for $\theta = (\theta_1,\theta_2)$ in terms of the new \textbf{E2} coordinates $(\psi,t) = \Theta_\psi(\theta)$.  Again, here $\psi$ defines a curve 
$
\Gamma_\psi(t): t \in \mathbb{R}_+ \mapsto u \in \mathbb{R}^d
$
and the SVO-equilibria $\mathbf{u}_\theta$ can be represented as a point along that curve. 

\begin{proposition}[\textbf{E2:} Player-Opt Expansion]
\label{prop:playerexp}
The SVO equilibria $\mathbf{u}_\theta$ 
for $(\theta_1,\theta_2) \in (0,\pi/2) \times (0,\pi/2)$ is given by 
\begin{align}
\mathbf{u}_\theta = 
\Gamma_\psi(t) = 
{\mathbf{u}_{1}}
+ 
G_\psi(t) 
\Big[{\mathbf{u}_{2}} -{\mathbf{u}_{1}}
\Big]
\label{eq:exp2main}
\end{align}
where $
(\psi,t) = \Theta_\psi^{-1}(\theta_1,\theta_2) 
$
\begin{align}
G_\psi(t)
& = 
\Big[\tfrac{1}{t}M_1
H_\psi^{\text{-}1}M_2^{\text{-}1} + I \Big]^{\text{-}\top} 
= 
\sum_i
W_{\psi i} 
\Big[
\tfrac{t}{\lambda_{\psi i}+t}\Big] 
V_{\psi i}^\top  
\label{eq:exp2sum} 
\end{align}
with 
$
H_\psi = \textbf{blkdg}
\big(\cos \psi I_{d_1 \times d_1}, \sin \psi I_{d_2 \times d_2}\big)$ where $M_1 H_\psi^{-1}M_2^{-1}$ has eigenvalues $\Lambda_\psi = \{\lambda_{\psi i}\}_{i=1}^d$, right-eigenvectors, 
$\{V_{\psi i} \}_{i=1}^d$ and left-eigenvectors 
$\{W_{\psi i}^\top \}_{i=1}^d$.
\end{proposition}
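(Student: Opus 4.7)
The plan is to mirror the strategy used for the Nash-expansion \textbf{E1}, but with the roles of $(\mathbf{M},\mathbf{N},\mathbf{a},\mathbf{b})$ replaced by $(M_1,M_2,c_1,c_2)$ so that the endpoints of the curves become $\mathbf{u}_1$ and $\mathbf{u}_2$ instead of $\mathbf{u}_\text{N}$ and $\mathbf{u}_\text{A}$. The key algebraic observation is that the SVO first-order conditions can be rewritten, under the \textbf{E2} coordinates, as a single linear system involving $M_1$ and $M_2$ multiplied by the block-diagonal scaling $H_\psi$.

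First I would write the SVO stationarity conditions in gradient form: player $i$'s condition says $\cos\theta_i \nabla_{u_i} J_i + \sin\theta_i \nabla_{u_i} J_{-i} = 0$, where $\nabla_{u_1} J_k$ is the first $d_1$-block of $M_k u + c_k$ and $\nabla_{u_2} J_k$ is the last $d_2$-block. Substituting $\theta_1 = \text{atan}(t\cos\psi)$ and $\theta_2 = \text{acot}(t\sin\psi)$ and dividing player 1's equation by $\cos\theta_1$ and player 2's by $\sin\theta_2$ turns them into
\begin{equation*}
[M_1 u + c_1]_{1:d_1} + t\cos\psi\,[M_2 u + c_2]_{1:d_1} = 0,\quad [M_1 u + c_1]_{d_1+1:d} + t\sin\psi\,[M_2 u + c_2]_{d_1+1:d} = 0.
\end{equation*}
Stacking these two blocks produces the compact system $(M_1 + tH_\psi M_2)\mathbf{u}_\theta = -c_1 - tH_\psi c_2$, where $H_\psi = \textbf{blkdg}(\cos\psi\, I_{d_1},\sin\psi\, I_{d_2})$.

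Next I would use the defining relations $c_1 = -M_1 \mathbf{u}_1$ and $c_2 = -M_2 \mathbf{u}_2$ (from the player-optimum formulas, valid since $M_1,M_2$ are invertible by Assumption 2 and symmetric) and change variables via $\mathbf{u}_\theta = \mathbf{u}_1 + w$. After cancellation this yields $(M_1 + tH_\psi M_2)w = tH_\psi M_2(\mathbf{u}_2 - \mathbf{u}_1)$. Factoring $tH_\psi M_2$ out on the left of the coefficient matrix gives
\begin{equation*}
w = \bigl(\tfrac{1}{t}M_2^{-1}H_\psi^{-1}M_1 + I\bigr)^{-1}(\mathbf{u}_2 - \mathbf{u}_1),
\end{equation*}
and since $M_1, M_2$ are symmetric and $H_\psi$ is diagonal, transposing the matrix inside the inverse gives exactly $\bigl(\tfrac{1}{t}M_1 H_\psi^{-1} M_2^{-1} + I\bigr)^\top$, establishing the closed form $G_\psi(t) = [\tfrac{1}{t}M_1 H_\psi^{-1} M_2^{-1} + I]^{-\top}$ in \eqref{eq:exp2sum}.

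Finally, for the spectral representation I would invoke that $M_2^{-1}H_\psi^{-1}M_1$ is the transpose of $M_1 H_\psi^{-1} M_2^{-1}$, so the two matrices share the spectrum $\Lambda_\psi$; moreover the right eigenvectors of the transpose are the left eigenvectors $\{W_{\psi i}\}$ of $M_1 H_\psi^{-1} M_2^{-1}$, and under the standard biorthogonal normalization $V_{\psi i}^\top W_{\psi j}=\delta_{ij}$ one has $W^{-1} = V^\top$. Plugging the eigendecomposition into $(\tfrac{1}{t}M_2^{-1}H_\psi^{-1}M_1 + I)^{-1}$ produces the rank-one sum $\sum_i W_{\psi i}\bigl[\tfrac{t}{\lambda_{\psi i}+t}\bigr]V_{\psi i}^\top$, completing the proof. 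The main obstacle I anticipate is bookkeeping of the transposes and block orderings so that the $(M_1,M_2)$ factorization with $H_\psi$ lines up with the definitions of $\mathbf{M}_\theta$ and $\mathbf{a}_\theta$ in \eqref{eq:basicequilibinv}; once that is clean, the manipulations reduce to the same push-through identity used in \textbf{E1}.
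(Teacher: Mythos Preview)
Your proposal is correct and essentially mirrors the paper's proof: the paper abstracts the manipulation into a general lemma for $u^\top = -(a^\top + b^\top H)(A+BH)^{-1}$ (proved via a Woodbury-type expansion) and then specializes with $A=M_1$, $B=M_2$, $H=tH_\psi$, obtaining the same closed form and spectral sum. The only cosmetic difference is that you work in column-vector form and invoke the symmetry of $M_1,M_2,H_\psi$ to recognize the $-\top$, whereas the paper's row-vector lemma yields it directly without appealing to symmetry.
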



\begin{proof}
(See Appendix \ref{app:proofs})
\end{proof}
The curves $\Gamma_\psi(t)$ for the scalar action case are illustrated in Fig. \ref{fig:playerexp}.

In the above proposition, we have used the following characterization of the spectrum of $G_{\psi}(t)$ as 
\begin{align}
\text{spec}\Big(G_\psi(t)\Big)
= \Big\{ \Big(1+ \tfrac{\lambda_{\psi i}}{t} \Big)^{-1} \ \Big| \ \lambda_{\psi i} \in \text{spec}\big(\Lambda_{\psi}\big)\Big\}
\label{eq:exp2spec}
\end{align}

Intuitively, the Player-opt expansion \textbf{E2} writes an SVO Nash as the 
optimal for one player in this case player-1, $\mathbf{u}_\text{1}$, plus a correction term that is determined by transforming the fixed $\mathbf{u}_2-\mathbf{u}_1$ by the matrix $G_{\psi}( t )$.  Note that we could have easily swapped the roles of player 1 and player 2 (and the appropriate parameter matrices) in the above expansion. 

\begin{remark}
In the above two expansions, we have assumed that $\mathbf{M}H_\phi^{-1}\mathbf{N}^{-1}$ and $M_1 H_\psi^{-1} M_2^{-1}$ are diagonalizable. This is done for simplicity of presentation.  The results would be similar in the general Jordan case. 
\end{remark}

\begin{remark}
Before discussing the structure of the the set of SVO equilibria and the curves $\Gamma_\phi(t)$ and $\Gamma_{\psi}( t )$ further, we give a visualization in a 3D case where $d_1 = 2, d_2 = 1$ and $d=3$ shown in Fig. \ref{fig:3dexample}.  This example provides several pieces of intuition immediately 
First, while still a two dimensional surface in the higher dimensional space, the space of SVO equilibria is only flat in 2D; in general higher dimensional spaces it can be curved (and even disconnected as we will see in later examples).   
 Second, the set of SVO equilibria does not necessarily lie in the convex hull of ${\mathbf{u}_\text{N}},{\mathbf{u}_\text{A}},{\mathbf{u}_1},{\mathbf{u}_2}$ in higher dimensions as it appears to in the 2D examples in Figs. \ref{fig:nashexp} and \ref{fig:playerexp}.  The full picture of the geometry of this set can be much more surprising, a fact that we explore in the next section. 
 \end{remark}

\begin{figure}
    \centering
\includegraphics[width=0.65\textwidth]
 {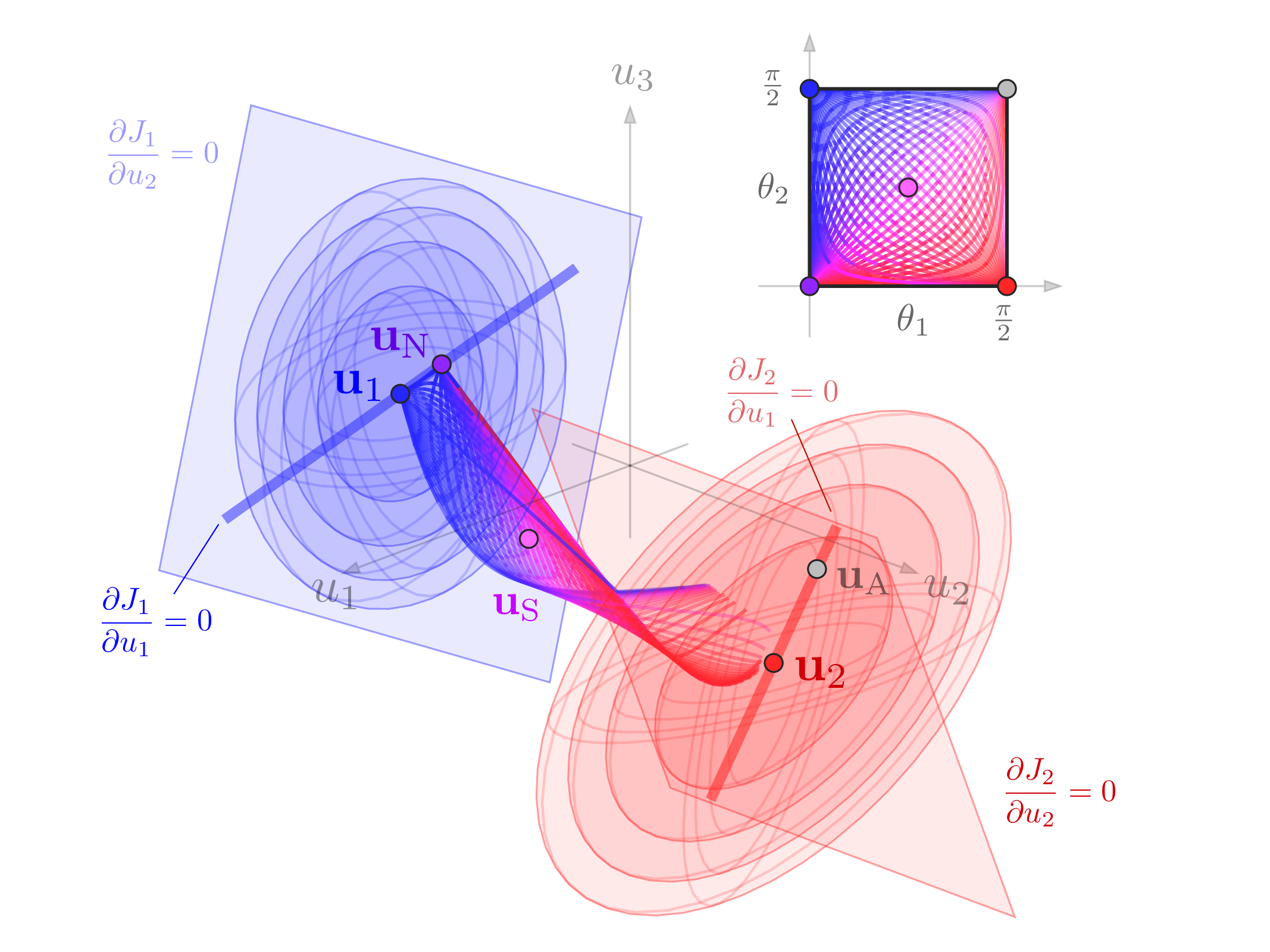}
    \caption{Illustration of the set of SVO-equilibria for 3D example, with $u_1 \in \mathbb{R}^2$ and $u_2 \in \mathbb{R}$.  Note that from this example we can see clearly that the space of SVO equilibria is not flat and also not in the convex hull of the equilibrium points.
    (Parameter values are given in App. \ref{app:values}.)
    }\label{fig:3dexample}
\end{figure}

\section{Set of SVO Equilibria}

In this section, we make comments about the geometry of the set of SVO equilibria. The geometry of each of these sets can be divided into two cases depending on the spectra of $G_\phi(t)$ (and $G_\psi(t)$).
\begin{itemize}
    \item Case 1: $G_{\phi}(t),G_{\psi}(t)$
    are contractions
    \item Case 2: 
$G_{\phi}(t),G_{\psi}(t)$
blow-up for at finite points $t \in [0,\infty)$.
\end{itemize}

\subsection{
Case 1: Bounded SVO Equilibria
}

\label{sec:contraction}

We start with the bounded case.  
For the following arguments define matrices of eigenvectors 
\begin{align*}
V_\phi = \Big[ V_{\phi 1} \ \cdots \ V_{\phi d}  \Big], \quad 
V_\psi = \Big[ V_{\psi 1} \ \cdots \ V_{\psi d}  \Big]
\end{align*}
When $\text{spec}(\Lambda_\phi) > 0$, we can show that the SVO equilibria along the curve $\Gamma_\phi(t)$ or $\Gamma_\psi(t)$ lie within a union of two ellipsoids.  
To make this argument, we will define positive-definite norms from the eigenvectors $V_\phi$ and $V_\psi$ and show that the maps $G_\phi(t)$ and $G_\psi(t)$ are contractions with respect to either of these norms for all $t \in [0,\infty)$.  
The constructions are parallel so we will only show the initial construction here for $V_\phi$ and $G_\phi(t)$.  

\subsubsection{$P_\phi$-Norm Contractions}
 Define the positive definite matrix $P_\phi = V_\phi V_\phi^\top$ 
 %
 Consider the following inner product and induced ``$P_\phi$-norm"
\begin{align}
\langle v,u \rangle_{P_\phi} :=
v^\top P_\phi u, \qquad 
\vert\vert u \vert\vert_{P_\phi} := 
\big(u^\top P_\phi u\big)^{\tfrac{1}{2}}
\end{align}
Define the open $P_\phi$-norm ball, $\mathcal{B}_\phi(c,r)$around center $c$ with radius $r>0$ 
\begin{align*}
\mathcal{B}_\phi(c,r) = \Big\{
u \in \mathbb{R}^{d} \ \big| \ 
\vert\vert u -c\vert\vert_{P_\phi} < r
\Big\} 
\end{align*}
and let $\mathcal{\bar B}_\phi(c,r) = \Big\{
u \in \mathbb{R}^{d} \ \big| \ 
\vert\vert u -c\vert\vert_{P_\phi} = r
\Big\} $ denote the boundary.  
Note that the boundaries of $\mathcal{B}_\phi(c,\cdot)$ are ellipsoidal level sets of the quadratic form $$(u-c)^\top P_\phi (u-c) = (u-c)^\top V_\phi V_\phi^\top (u-c)$$. We can now give the following definition of a contraction with respect to the $P_\phi$ norm. 
\begin{definition}
\label{def:contract}
A matrix $G$ is a contraction with respect to the $P_\phi$-norm if for any vector $u$ with $\vert\vert u \vert\vert_{P_\phi} = r$, then $\vert\vert Gu \vert\vert_{P_\phi} < r$.  Equivalently, if $u \in \mathcal{\bar B}_\phi(c,r)$, then $Gu \in \mathcal{B}_\phi(c,r)$. 
\end{definition}
We can now state the following lemma. \begin{lemma}
\label{lem:contract}
$G_\phi(t)$ is a contraction w.r.t. the $P_\phi$-norm 
for all $t \in [0,\infty)$ if and only if 
$\text{spec}(\Lambda_\phi) > 0$.
\end{lemma}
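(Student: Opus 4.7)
The plan is to diagonalize $G_\phi(t) = V_\phi D_\phi(t) V_\phi^{-1}$ with $D_\phi(t) = \mathrm{diag}\bigl(t/(\lambda_{\phi i}+t)\bigr)_{i=1}^d$, translate the contraction property into a per-eigenvalue scalar condition, and analyze that condition over $t \in (0,\infty)$.

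First, I would recast $\|G_\phi(t) u\|_{P_\phi} < \|u\|_{P_\phi}$ (for all nonzero $u$) as the Lyapunov-type matrix inequality $G_\phi(t)^\top P_\phi G_\phi(t) \prec P_\phi$, substitute the spectral forms of $G_\phi(t)$ and $P_\phi = V_\phi V_\phi^\top$, and change to eigenvector coordinates $u = V_\phi \alpha$. Since the action of $G_\phi(t)$ in these coordinates is simply multiplication by the diagonal $D_\phi(t)$, the $P_\phi$-norm and its image under $G_\phi(t)$ become quadratic forms in $\alpha$ mediated by $D_\phi(t)$, and the Lyapunov comparison decouples across eigen-indices into the per-mode scalar conditions $|d_{\phi i}(t)| < 1$. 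Necessity of these scalar conditions is also immediate by testing the contraction on a single right-eigenvector $V_{\phi i}$: since $G_\phi(t) V_{\phi i} = d_{\phi i}(t) V_{\phi i}$, the $P_\phi$-norm scales exactly by $|d_{\phi i}(t)|$.

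Second, I would carry out the scalar analysis. Squaring $|t/(\lambda_{\phi i}+t)| < 1$ and rearranging yields $2t\,\mathrm{Re}(\lambda_{\phi i}) + |\lambda_{\phi i}|^2 > 0$, which must hold for every $t \in (0,\infty)$. For real $\lambda_{\phi i} > 0$ this is manifestly true; conversely, if $\lambda_{\phi i} = 0$ then $d_{\phi i}(t) \equiv 1$ (no contraction), and if $\lambda_{\phi i} < 0$ then the denominator $\lambda_{\phi i} + t$ crosses zero as $t$ sweeps $(0,\infty)$, driving $|d_{\phi i}(t)|$ past $1$ and even to infinity. Taking all $i$ together yields the claimed equivalence with $\mathrm{spec}(\Lambda_\phi) > 0$.

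The delicate step is the decoupling in the first paragraph: since $V_\phi$ is generically non-orthogonal, the Lyapunov inequality does not obviously reduce to $d$ independent scalar inequalities, and one must verify that $P_\phi = V_\phi V_\phi^\top$ is chosen precisely so that the Gram-matrix cross terms from $V_\phi^\top V_\phi$ cancel symmetrically in the comparison. I expect the cleanest route to be a Schur-product argument on the residual $P_\phi - G_\phi(t)^\top P_\phi G_\phi(t)$, exploiting that when each $d_{\phi i}(t) \in [0,1)$ the coefficient matrix with entries $(1 - d_{\phi i}(t) d_{\phi j}(t))$ admits a PSD decomposition compatible with the Gram structure inherited from $V_\phi^\top V_\phi$; this is where I would spend the bulk of the technical effort.
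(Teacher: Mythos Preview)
Your overall plan is the paper's plan, but the diagonalization you start from is wrong, and that error is exactly what generates the ``delicate step'' you worry about. Recall that $G_\phi(t) = \big[\tfrac{1}{t}\mathbf{M}H_\phi^{-1}\mathbf{N}^{-1}+I\big]^{-\top}$ carries a transpose: if $\mathbf{M}H_\phi^{-1}\mathbf{N}^{-1}=V_\phi\Lambda_\phi V_\phi^{-1}$, then
\[
G_\phi(t)=V_\phi^{-\top}D_{\phi,t}\,V_\phi^\top,\qquad D_{\phi,t}=\mathrm{diag}\big(t/(\lambda_{\phi i}+t)\big),
\]
so the right-eigenvectors of $G_\phi(t)$ are the columns of $V_\phi^{-\top}$ (i.e.\ the $W_{\phi i}$), not the $V_{\phi i}$. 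Your claim $G_\phi(t)=V_\phi D_\phi(t)V_\phi^{-1}$ and the test vector $V_{\phi i}$ are therefore incorrect.

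With the correct form the decoupling is a one-line cancellation, which is exactly how the paper argues:
\[
G_\phi(t)^\top P_\phi\,G_\phi(t)
= V_\phi D_{\phi,t}V_\phi^{-1}\cdot V_\phi V_\phi^\top\cdot V_\phi^{-\top}D_{\phi,t}V_\phi^\top
= V_\phi D_{\phi,t}^2 V_\phi^\top,
\]
so $P_\phi - G_\phi(t)^\top P_\phi G_\phi(t)=V_\phi(I-D_{\phi,t}^2)V_\phi^\top$, and contraction is equivalent to $|d_{\phi i}(t)|<1$ for all $i$. Equivalently, the right coordinate change is $\alpha=V_\phi^\top u$ (not $u=V_\phi\alpha$): then $\|u\|_{P_\phi}=\|\alpha\|_2$ and $G_\phi(t)$ acts by $\alpha\mapsto D_{\phi,t}\alpha$, so everything decouples immediately. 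The Schur-product route you sketch is unnecessary; the Gram cross-terms you anticipate simply do not appear once the transpose is placed correctly. Your scalar analysis in the second paragraph is fine and matches the paper's spectral characterization; for necessity, test on $W_{\phi i}$ rather than $V_{\phi i}$.
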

\begin{proof}
Based on the characterizations of the spectra of $G_\phi(t)$ in Eqns.~\eqref{eq:exp1spec}
for $t \in [0,\infty)$ we have that 
\begin{align*}
\big|\text{spec}\big(G_\phi(t)\big)\big| < 1 \ \ \iff \ \ \text{spec}\big(\Lambda_\phi \big) > 0
\end{align*}
Note that $G_\phi(t) = V_\phi^{-\top} D_{\phi,t} V_\phi^\top$ where $D_{\phi,t}$ is a diagonal matrix with elements defined by the eigenvalues given in Eq.~\eqref{eq:exp1spec}.  Consider $u$ with $\vert\vert u\vert\vert_{P_\phi} = r$ and note that
\begin{align*}
\vert\vert 
G u\vert\vert_{P_\phi}^2
& = 
u^\top V_\phi D_{\phi,t}^2 V_\phi^\top u <
u^\top V_\phi V_\phi^\top u = r^2
\end{align*}
\end{proof}
From this contraction property, we can get a course but somewhat surprising bound on each curve defined by $G_{\phi}(t)$ and $G_\psi(t)$
\begin{proposition}
\label{prop:intersectellipses}
Given $\text{spec}(\Lambda_\phi) >0$, 
\begin{subequations}
\begin{align}
\Gamma_{\phi}(t) & \subset \mathcal{B}_\phi \big({\mathbf{u}_\text{N}},r\big) \cap
\mathcal{B}_\phi \big({\mathbf{u}_\text{A}},r\big) 
\label{eq:balls1}
\end{align}
for all $t \in [0,\infty)$
with $r =
\vert\vert 
{\mathbf{u}_\text{A}} - {\mathbf{u}_\text{N}} 
\vert\vert_{P_\phi}$.  Similarly, if $\text{spec}(\Lambda_\psi) > 0$, then 
\begin{align}
\Gamma_{\psi}( t ) 
 & \subset \mathcal{B}_{\psi}\big({\mathbf{u}_1},r'\big) \cap
\mathcal{B}_{\psi}\big({\mathbf{u}_2},r'\big) 
\label{eq:balls2}
\end{align}
for all $ t  \in [0,\infty)$ with $r' =
\vert\vert 
{\mathbf{u}_1} - {\mathbf{u}_2}
\vert\vert_{P_\phi}$
\end{subequations}
\end{proposition}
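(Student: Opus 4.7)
The plan is to reduce both containments to the $P_\phi$-contraction property established in Lemma~\ref{lem:contract} by rewriting the curve $\Gamma_\phi(t)$ as a contraction applied to the fixed vector $\mathbf{u}_\text{A} - \mathbf{u}_\text{N}$, centered at each of the two focal equilibria in turn.

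First, for the ball centered at $\mathbf{u}_\text{N}$, I would use Eq.~\eqref{eq:exp1main} to write $\Gamma_\phi(t) - \mathbf{u}_\text{N} = G_\phi(t)[\mathbf{u}_\text{A} - \mathbf{u}_\text{N}]$, take the $P_\phi$-norm of both sides, and invoke Lemma~\ref{lem:contract} directly to conclude $\vert\vert \Gamma_\phi(t) - \mathbf{u}_\text{N} \vert\vert_{P_\phi} < \vert\vert \mathbf{u}_\text{A} - \mathbf{u}_\text{N} \vert\vert_{P_\phi} = r$, which places $\Gamma_\phi(t) \in \mathcal{B}_\phi(\mathbf{u}_\text{N}, r)$.

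For the ball centered at $\mathbf{u}_\text{A}$, the idea is to rewrite $\Gamma_\phi(t) - \mathbf{u}_\text{A} = -(I - G_\phi(t))[\mathbf{u}_\text{A} - \mathbf{u}_\text{N}]$ and then show that $I - G_\phi(t)$ is also a $P_\phi$-contraction. Starting from the factorization $G_\phi(t) = V_\phi^{-\top} D_{\phi,t} V_\phi^\top$ with $D_{\phi,t}$ diagonal and entries $t/(\lambda_{\phi i} + t)$, the matrix $I - G_\phi(t)$ has the same left/right eigenvector structure but with diagonal entries $\lambda_{\phi i}/(\lambda_{\phi i} + t)$. Under $\text{spec}(\Lambda_\phi) > 0$ and $t > 0$ these entries lie strictly in $(0,1)$, so replaying the $P_\phi$-norm computation at the end of the proof of Lemma~\ref{lem:contract} with $D_{\phi,t}^2$ replaced by $(I - D_{\phi,t})^2$ yields the strict bound $\vert\vert \Gamma_\phi(t) - \mathbf{u}_\text{A}\vert\vert_{P_\phi} < r$, placing $\Gamma_\phi(t)$ in the second ball as well.

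The argument for $\Gamma_\psi(t)$ is entirely parallel after swapping $(\phi, \mathbf{u}_\text{N}, \mathbf{u}_\text{A})$ for $(\psi, \mathbf{u}_1, \mathbf{u}_2)$ and using the $P_\psi$-norm induced by $V_\psi$ together with the spectrum formula in Eq.~\eqref{eq:exp2spec}; the analog of Lemma~\ref{lem:contract} for $G_\psi(t)$ follows from the same diagonalization-based computation. The only non-routine point---and the one that deserves care---is verifying that $I - G_\phi(t)$ (and $I - G_\psi(t)$) inherits the contraction property from $G_\phi(t)$, but once the eigenvalue bookkeeping is carried out this falls out immediately from the spectral representation used in the lemma, so I do not expect any substantive obstacle beyond this observation.
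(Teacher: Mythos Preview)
Your argument is correct and in fact coincides with the paper's at the level of the underlying matrix identity, though the packaging differs. For the second ball, the paper re-derives an alternate expansion $\mathbf{u}_\theta = \mathbf{u}_\text{A} + G'_\phi(t)[\mathbf{u}_\text{N} - \mathbf{u}_\text{A}]$ with $G'_\phi(t) = \big(t\mathbf{N}H_\phi\mathbf{M}^{-1} + I\big)^{-\top}$ by swapping the roles of $(\mathbf{M},\mathbf{a})$ and $(\mathbf{N},\mathbf{b})$ in the original derivation, and then argues that $G'_\phi(t)$ shares eigenvectors with $G_\phi(t)$ and has reciprocal spectrum, hence is also a $P_\phi$-contraction. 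Your route---writing $\Gamma_\phi(t) - \mathbf{u}_\text{A} = -(I - G_\phi(t))[\mathbf{u}_\text{A} - \mathbf{u}_\text{N}]$ and reading off the eigenvalues $\lambda_{\phi i}/(\lambda_{\phi i}+t) \in (0,1)$ directly---is more economical and avoids re-running the expansion machinery; indeed a short computation shows $G'_\phi(t) = I - G_\phi(t)$, so the two arguments are literally about the same matrix. The paper's framing has the minor conceptual advantage of making the $\mathbf{u}_\text{N} \leftrightarrow \mathbf{u}_\text{A}$ symmetry explicit, whereas yours gets to the bound in one line once Lemma~\ref{lem:contract} is in hand.
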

\begin{proof}
We only discuss \eqref{eq:balls1} since the proof of \eqref{eq:balls2} is parallel. The inclusion of $\Gamma_\phi(t)$ in $\mathcal{B}_\phi(\mathbf{u}_\text{N},r)$ is immediate from noting the form of Eqn. \eqref{eq:exp1main} and applying Lemma \ref{lem:contract}.  
Inclusion in $\mathcal{B}_\phi(\mathbf{u}_\text{A},r)$ comes from noting that 
reversing the roles of $\mathbf{u}_\text{N}$ and  $\mathbf{u}_\text{A}$ and also $\mathbf{M}$ and $\mathbf{N}$ and 
following the same argument as the Nash expansion, we can write 
\begin{align}
\mathbf{u}_\theta 
& = 
{\mathbf{u}_{\text{A}}}
+ 
\underbrace{
\Big(t\mathbf{N}
H_\phi \mathbf{M}^{-1} + I \Big)^{-\top}}_{G'_\phi(t)}  
\Big[{\mathbf{u}_{\text{N}}} -{\mathbf{u}_{\text{A}}}
\Big] \label{eq:exp1main_alt} 
\end{align}
We note that $t\mathbf{N}H_\phi \mathbf{M}^{-1} = \Big(\tfrac{1}{t}\mathbf{M}H_\phi^{-1}
\mathbf{N}^{-1}\Big)^{-1}$ and draw several implications.  First the spectrum of 
$t\mathbf{N}H_\phi \mathbf{M}^{-1}$
is given by $\text{spec}(\Lambda_\phi)^{-1}$ and $\spec (\Lambda_\phi) > 0$ implies that $\text{spec}(\Lambda_\phi)^{-1} > 0$.  Second $G_\phi(t)$ and $G'_\phi(t)$ have the same eigenvectors.  From Lemma \ref{lem:contract} then, we have that $G'_\phi(t)$ is a $P_\phi$-norm contraction and that $\Gamma_\phi(t)\subset \mathcal{B}_\phi(\mathbf{u}_\text{A},r)$ which completes the argument.

\end{proof}

\begin{figure}
    \centering
    \includegraphics[width=0.8\textwidth]{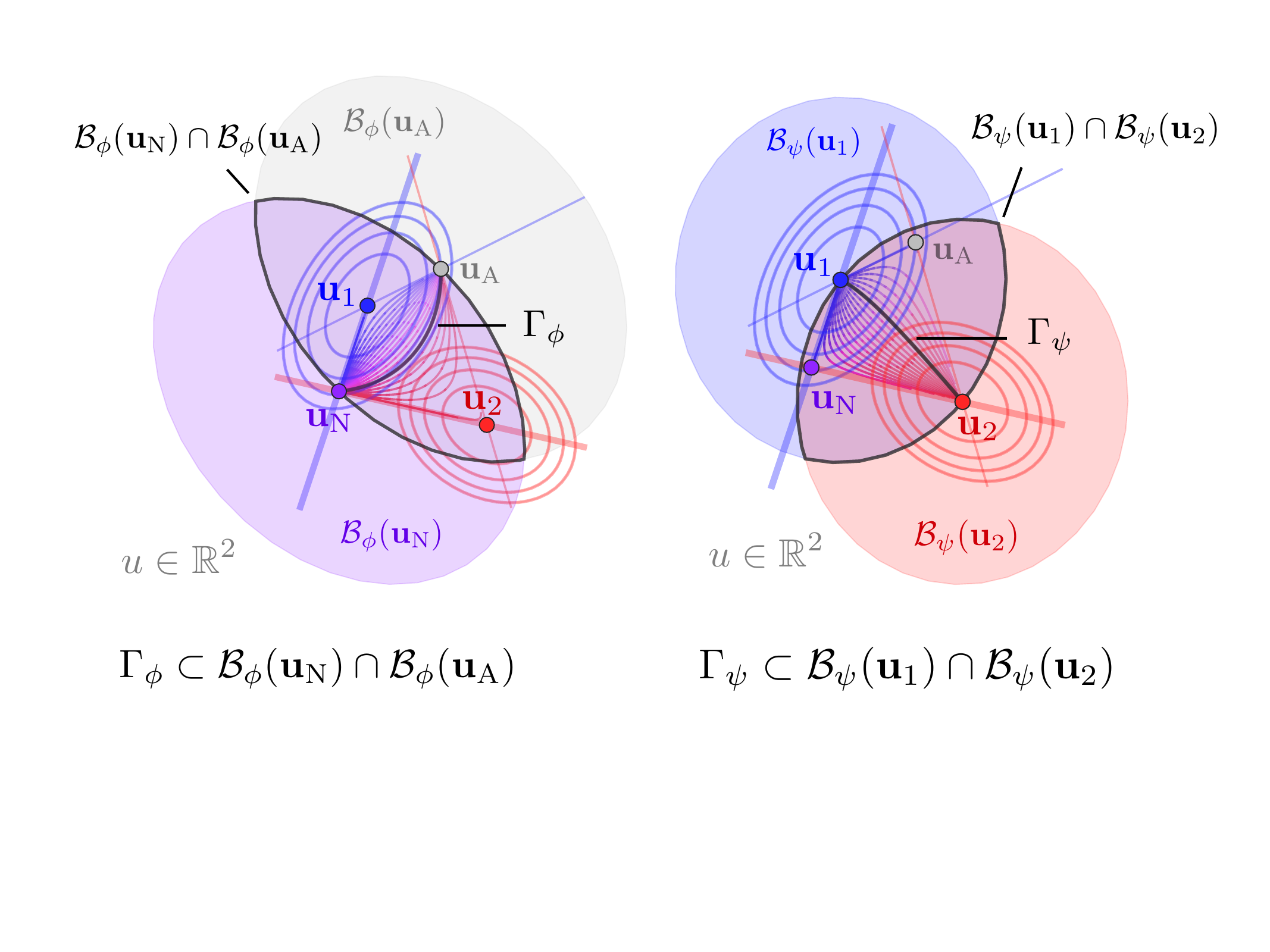}
    \caption{
    (Left) Ellipsoidal bounds on $\Gamma_\phi(t)$ for $t \in [0,\infty)$ when $\text{spec}(\Lambda_\phi) > 0$ from Eq. \eqref{eq:balls1}.  
    (Right) Ellipsoidal bounds on $\Gamma_\psi(t)$ for $t \in [0,\infty)$ when $\text{spec}(\Lambda_\psi) > 0$.
    Note that the dependence on $r,r'$ and $t$ are suppressed for clarity.
    Parameter values are given in App. \ref{app:values}.
    }
    \label{fig:balls}
\end{figure}
This proposition is illustrated for a simple $2\times 2$ example in Fig. \ref{fig:balls}. If we want to go further and provide bounds on a specific SVO equilibrium $\mathbf{u}_\theta$, we can use both bounds provided in 
Prop. \ref{prop:intersectellipses}.  We state this here as a proposition without proof since it is immediate from Prop.  \ref{prop:intersectellipses} given that $\Theta_\phi$ and $\Theta_\psi$ are bijections. 
\begin{proposition}
\label{prop:intersectellipses4}
Given $\text{spec}(\Lambda_\phi) >0$ and $\text{spec}(\Lambda_\psi) > 0$
\begin{align}
    \mathbf{u}_\theta \in \mathcal{B}_\phi \big({\mathbf{u}_\text{N}},r\big) \cap
\mathcal{B}_\phi \big({\mathbf{u}_\text{A}},r\big) 
\cap
\mathcal{B}_{\psi} \big({\mathbf{u}_1},r'\big) \cap
\mathcal{B}_{\psi}
\big({\mathbf{u}_2},r'\big) 
\label{eq:intersect4}
\end{align}
with $r =
\vert\vert 
{\mathbf{u}_\text{A}} - {\mathbf{u}_\text{N}} 
\vert\vert_{P_\phi}$.
and $r' =
\vert\vert 
{\mathbf{u}_1} - {\mathbf{u}_2}
\vert\vert_{P_\psi}$
where $(\phi,\cdot) = \Theta_\phi^{-1}(\theta)$ and $(\psi,\cdot) = {\Theta_{\psi}}^{-1}(\theta)$. 
\end{proposition}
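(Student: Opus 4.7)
The plan is to leverage the bijectivity of $\Theta_\phi$ and $\Theta_\psi$ from Proposition~\ref{prop:coords} to identify the single point $\mathbf{u}_\theta$ simultaneously as a point along a $\Gamma_\phi$ curve and along a $\Gamma_\psi$ curve, and then invoke Proposition~\ref{prop:intersectellipses} once for each parametrization. The intersection of the two pairs of ellipsoidal containments is exactly~\eqref{eq:intersect4}.

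First, fix any $\theta = (\theta_1,\theta_2) \in (0,\pi/2) \times (0,\pi/2)$. By Proposition~\ref{prop:coords}, the maps $\Theta_\phi$ and $\Theta_\psi$ are bijections onto this domain, so there exist unique preimages $(\phi, t_\phi) = \Theta_\phi^{-1}(\theta)$ and $(\psi, t_\psi) = \Theta_\psi^{-1}(\theta)$. By Propositions~\ref{prop:nashexp} and~\ref{prop:playerexp}, the two expansions evaluate to the same SVO-equilibrium:
\begin{align*}
\mathbf{u}_\theta \;=\; \Gamma_\phi(t_\phi) \;=\; \Gamma_\psi(t_\psi).
\end{align*}

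Now apply Proposition~\ref{prop:intersectellipses} in both parametrizations. Under the hypothesis $\spec(\Lambda_\phi) > 0$, inclusion~\eqref{eq:balls1} gives $\Gamma_\phi(t_\phi) \in \mathcal{B}_\phi(\mathbf{u}_\text{N},r) \cap \mathcal{B}_\phi(\mathbf{u}_\text{A},r)$; under the hypothesis $\spec(\Lambda_\psi) > 0$, inclusion~\eqref{eq:balls2} gives $\Gamma_\psi(t_\psi) \in \mathcal{B}_\psi(\mathbf{u}_1,r') \cap \mathcal{B}_\psi(\mathbf{u}_2,r')$. Since $\mathbf{u}_\theta$ equals both points, it must lie in the intersection of all four balls, which is exactly the claim~\eqref{eq:intersect4}.

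There is essentially no real obstacle here: the content is carried entirely by Proposition~\ref{prop:intersectellipses} and by the bijectivity from Proposition~\ref{prop:coords}. The only subtlety worth flagging is that the $\phi$ and $\psi$ appearing in the ellipsoidal balls (and thus in the norms $P_\phi,P_\psi$ and in the radii $r,r'$) are the specific values pinned down by the fixed $\theta$ through the two coordinate inversions; once this bookkeeping is made explicit, the four-way containment follows immediately by intersecting the two independently derived pairs of bounds.
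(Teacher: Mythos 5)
Your argument is exactly the one the paper intends: it states Proposition~\ref{prop:intersectellipses4} without proof precisely because it is immediate from Proposition~\ref{prop:intersectellipses} together with the bijectivity of $\Theta_\phi$ and $\Theta_\psi$, which is the identification $\mathbf{u}_\theta = \Gamma_\phi(t_\phi) = \Gamma_\psi(t_\psi)$ you make explicit. Your write-up is correct and simply spells out the bookkeeping the paper leaves implicit.
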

Prop. \ref{prop:intersectellipses4} is illustrated in Fig. \ref{fig:ellipsesNA12}.  
\begin{figure}
    \centering
    \includegraphics[width=0.45\textwidth]
    {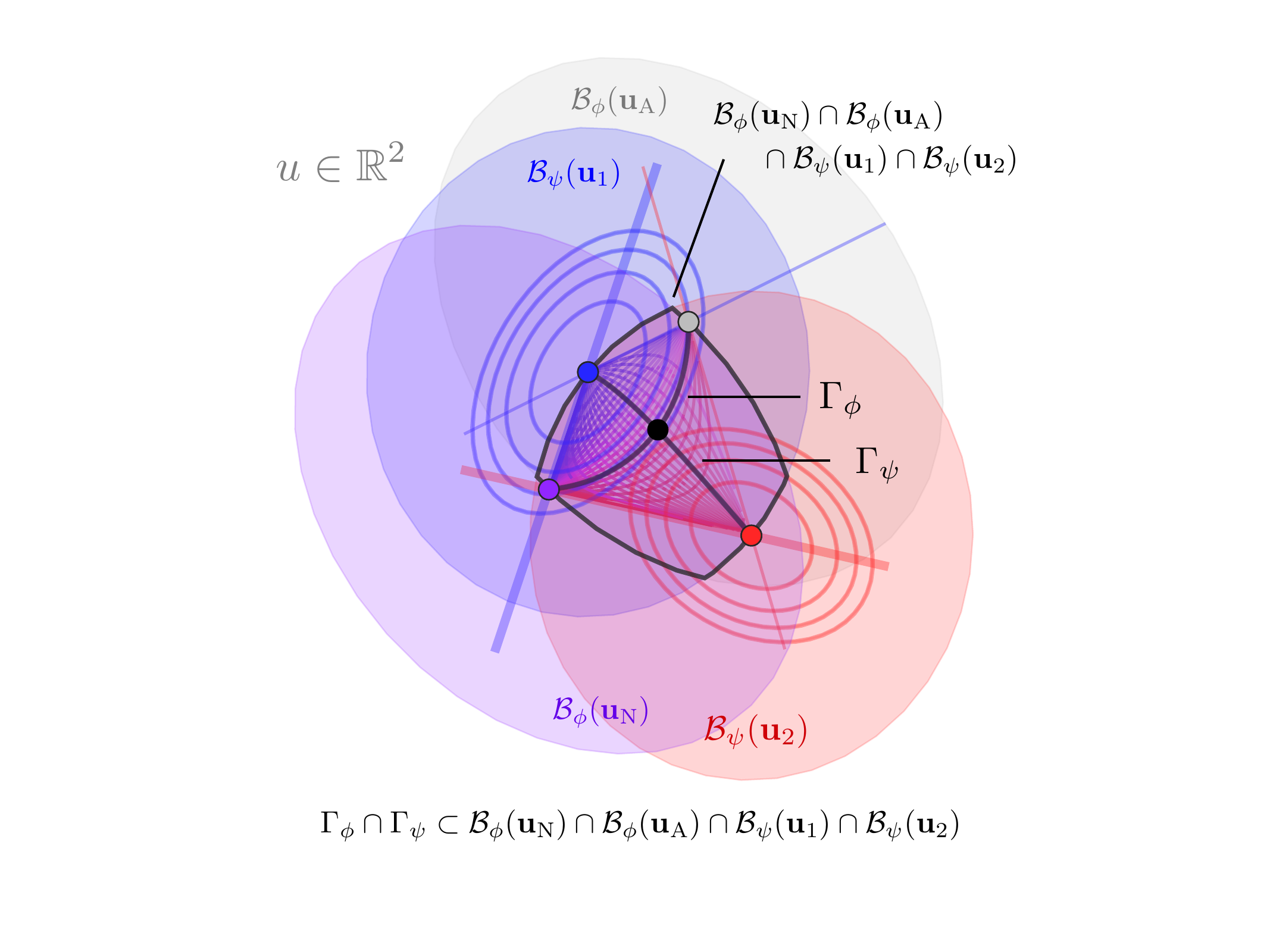}
    \caption{Joint ellipsoidal bounds on $\mathbf{u}_\theta = \Gamma_\phi(t) \cap \Gamma_\psi(t)$ for $(\phi,t) = \Theta_\phi^{-1}(\theta)$ and 
    $(\psi,t) = \Theta_\psi^{-1}(\theta)$ when $\text{spec}(\Lambda_\phi) > 0$ and $\text{spec}(\Lambda_\psi) > 0 $
    from Eq. \eqref{eq:intersect4}. 
    Note that the dependence on $r,r'$ and $t$ are suppressed for clarity.
    Parameter values are given in App. \ref{app:values}.
    }
    \label{fig:ellipsesNA12}    
\end{figure}
Note that for different values of $(\theta_1,\theta_2)$  and thus $(\phi,t)$ and $(\psi, t )$ the shape of the $P_\phi-\text{norm}$ and $P_{\psi}-\text{norm}$ balls (and their intersections change).  We illustrate for several sample SVO values in Fig. \ref{fig:ellipsessamples}.

\begin{figure}
    \centering
    \includegraphics[width=0.6\textwidth]
    {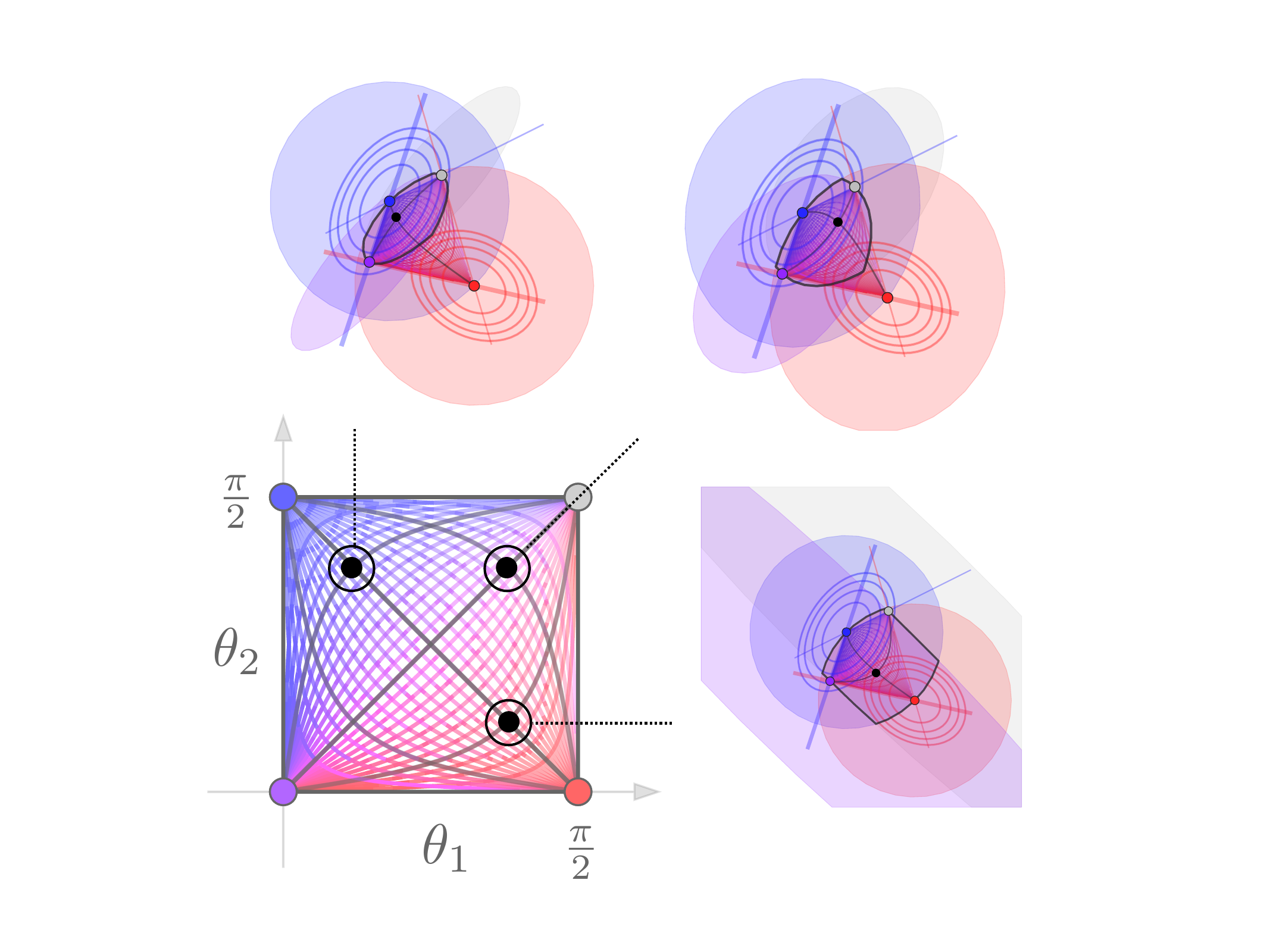}
    \caption{
    For different values of $\theta$, the corresponding values of $\phi$ and $\psi$ vary and thus the shapes of the $P_\phi$ and $P_\psi$-balls vary as well.  The resulting shapes of the ellipsiodal bounds and their intersections is illustrated for several different values of $\theta$. Clockwise from the top-left, $\theta = (\pi/8,3\pi/8)$, 
    $\theta = (3\pi/8,3\pi/8)$, and 
    $\theta = (3\pi/8,3\pi/8)$.  
    (Parameter values are given in App. \ref{app:values}.)}\label{fig:ellipsessamples}    
\end{figure}

\subsubsection{Eigenvalue Considerations}

\label{sec:considerations}

We now give a a few arguments about how the eigenvalues will behave in a limited number of scenarios.  In particular, we focus on Expansion 2 and the eigenstructure of the matrix $M_1 H_\psi^{-1} M_2^{-1}$.  In this expansion the matrices $M_1,M_2$ are the quadratic portion of the costs for each agent and are thus perhaps most easy to relate to the structure of the game.  
Here, we will provide a bounding result for \textbf{Assump-1c} and then comment why it may be difficult to obtain bounding results given only \textbf{Assump-1a} \& \textbf{Assump-1b}. 
Given that costs $M_1,M_2$ satisfy \textbf{Assump-1c}, we can make the following limited statement about the eigenvalues in Expansion 2.  
\begin{proposition}
\label{prop:M1M2posdef}
If $M_1,M_2 \succ 0$ then for $\psi = \pi/4$, 
\begin{align*}
\text{spec}\Big(\tfrac{1}{\sqrt{2}}M_1H_\psi^{-1}M_2^{-1}\Big) 
=
\text{spec}\Big(M_1M_2^{-1}\Big) 
> 0
\end{align*}
\end{proposition}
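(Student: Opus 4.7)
The proof is essentially a direct calculation followed by a standard similarity argument, so the plan is short.

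First I would reduce the left-hand side to $M_1 M_2^{-1}$. At $\psi = \pi/4$ we have $\cos\psi = \sin\psi = 1/\sqrt{2}$, so the block-diagonal matrix $H_\psi = \mathbf{blkdg}\bigl(\cos\psi\, I_{d_1}, \sin\psi\, I_{d_2}\bigr)$ collapses to $\tfrac{1}{\sqrt{2}} I_d$ and therefore $H_\psi^{-1} = \sqrt{2}\, I_d$. Substituting this into the expression gives
\begin{equation*}
\tfrac{1}{\sqrt{2}} M_1 H_\psi^{-1} M_2^{-1} = \tfrac{1}{\sqrt{2}}\,M_1\,(\sqrt{2}\, I_d)\, M_2^{-1} = M_1 M_2^{-1},
\end{equation*}
which establishes the first equality in the claim.

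Next I would show that the spectrum of $M_1 M_2^{-1}$ is real and strictly positive by a standard congruence/similarity step. Since $M_2 \succ 0$ by hypothesis, its unique symmetric positive-definite square root $M_2^{1/2}$ exists and is invertible. Conjugating by $M_2^{1/2}$ gives the similarity
\begin{equation*}
M_2^{-1/2}\,\bigl(M_1 M_2^{-1}\bigr)\,M_2^{1/2} \;=\; M_2^{-1/2} M_1 M_2^{-1/2},
\end{equation*}
so $M_1 M_2^{-1}$ and $M_2^{-1/2} M_1 M_2^{-1/2}$ share the same spectrum. The right-hand side is symmetric, and because $M_1 \succ 0$ and $M_2^{-1/2}$ is nonsingular the congruence $M_2^{-1/2} M_1 M_2^{-1/2}$ is positive definite (alternatively invoke Sylvester's law of inertia). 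Its eigenvalues are therefore real and strictly positive, and the same holds for $M_1 M_2^{-1}$. Combining this with the reduction above yields $\text{spec}\bigl(\tfrac{1}{\sqrt{2}} M_1 H_\psi^{-1} M_2^{-1}\bigr) > 0$.

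There is no real obstacle here; the entire argument hinges on the happy fact that $H_{\pi/4}$ is a scalar multiple of the identity, which removes the block structure and lets the positivity of $M_1 M_2^{-1}$ be read off from the standard product-of-positive-definite-matrices fact. The only mild subtlety worth flagging in the write-up is that $M_1 M_2^{-1}$ itself is generally not symmetric, so one must pass through the similarity transform rather than claim positive definiteness of $M_1 M_2^{-1}$ directly.
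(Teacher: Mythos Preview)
Your proof is correct and follows essentially the same route as the paper: reduce $H_{\pi/4}^{-1}$ to a scalar multiple of the identity, then conjugate $M_1 M_2^{-1}$ by $M_2^{1/2}$ to obtain the symmetric matrix $M_2^{-1/2} M_1 M_2^{-1/2}$, which is positive definite by congruence with $M_1$. Your closing remark that $M_1 M_2^{-1}$ is generally not symmetric (so the similarity step is genuinely needed) is a nice addition not made explicit in the paper.
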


\begin{proof}
Since $\psi = \pi/4$, we have that $\tfrac{1}{\sqrt{2}}M_1 H_\psi^{-1} M_2^{-1} = M_1 M_2^{-1}$. Since $M_1,M_2 \succ 0$, we can take $M_2^{-1/2}$ to be the unique, positive definite symmetric square root of $M_2^{-1}$. We then have the following similarity relation (denoted $\sim$)
\begin{align*}
M_1M_2^{-1} \sim M_2^{-1/2}\big(M_1M_2^{-1}\big)M_2^{1/2}
=
M_2^{-1/2}M_1M_2^{-1/2}
\end{align*}
The RHS is congruent to $M_1$ and thus 
\begin{align*}
\text{spec}\Big(M_2^{-1/2}M_1M_2^{-1/2}\Big) > 0 \iff 
\text{spec}\Big(M_1 \Big) > 0 
\end{align*}
Thus since $M_1 \succ 0$, we have that $\text{spec}(M_1M_2^{-1}) >0$.  
\end{proof}
This proposition implies the following corollary.
\begin{corollary}
If $M_1,M_2 \succ 0$, the prosocial equilibrium $(\theta_1,\theta_2) = (\pi/4,\pi/4)$ is inside the intersection bound given in Eq. 
\ref{eq:balls2} with 
$\psi = \pi/4$. 
\end{corollary}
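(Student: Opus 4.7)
The plan is to chain together two results already in hand: Proposition \ref{prop:M1M2posdef}, which delivers the positive-spectrum condition at $\psi = \pi/4$, and the second half of Proposition \ref{prop:intersectellipses} (equation \eqref{eq:balls2}), which turns that positive-spectrum condition into the desired ellipsoidal inclusion. The only genuine piece of content I need to supply is the observation that the SVO-equilibrium at $\theta = (\pi/4,\pi/4)$ actually lies on the curve $\Gamma_\psi$ for $\psi = \pi/4$, i.e.\ that $\Theta_\psi^{-1}(\pi/4,\pi/4)$ has first coordinate $\pi/4$.

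First I would verify the coordinate matching. From \eqref{eq:coords2}, $\Theta_\psi(\psi,t) = (\text{atan}(t\cos\psi), \text{acot}(t\sin\psi))$. Setting both outputs equal to $\pi/4$ gives the two conditions $t\cos\psi = 1$ and $t\sin\psi = 1$, whose unique solution in $(0,\pi/2)\times(0,\infty)$ is $(\psi,t) = (\pi/4,\sqrt{2})$. Hence $\Gamma_\psi(\sqrt{2})$ with $\psi = \pi/4$ equals the SVO-equilibrium $\mathbf{u}_\theta$ at $\theta = (\pi/4,\pi/4)$. (As a side sanity check, one can also note that at $\theta = (\pi/4,\pi/4)$ both players optimize $\tfrac{1}{\sqrt{2}}(J_1 + J_2)$, so the SVO-Nash first-order conditions reduce to the joint minimization conditions for $J_1 + J_2$, which is exactly the prosocial optimum $\mathbf{u}_\text{S}$.)

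Next I would invoke Proposition \ref{prop:M1M2posdef}: since $M_1,M_2\succ 0$, the spectrum of $M_1 H_{\psi}^{-1} M_2^{-1}$ at $\psi = \pi/4$ equals (up to the positive scalar $\sqrt{2}$) the spectrum of $M_1 M_2^{-1}$, which is positive. Therefore $\text{spec}(\Lambda_\psi) > 0$ at $\psi = \pi/4$, and the hypothesis of \eqref{eq:balls2} in Proposition \ref{prop:intersectellipses} is satisfied. Applying that inclusion with $\psi = \pi/4$ yields
\begin{equation*}
\Gamma_{\pi/4}(t) \subset \mathcal{B}_{\pi/4}(\mathbf{u}_1,r') \cap \mathcal{B}_{\pi/4}(\mathbf{u}_2,r')
\quad \text{for all } t\in[0,\infty),
\end{equation*}
with $r' = \|\mathbf{u}_1 - \mathbf{u}_2\|_{P_{\pi/4}}$. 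Specializing to $t = \sqrt{2}$ and using the identification from the first step gives $\mathbf{u}_\text{S} = \Gamma_{\pi/4}(\sqrt{2}) \in \mathcal{B}_{\pi/4}(\mathbf{u}_1,r') \cap \mathcal{B}_{\pi/4}(\mathbf{u}_2,r')$, which is the claim.

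There is really no hard step here — the corollary is essentially a bookkeeping statement that the prosocial point $(\pi/4,\pi/4)$ falls under the regime already analyzed by Propositions \ref{prop:M1M2posdef} and \ref{prop:intersectellipses}. The only thing that could trip one up is the coordinate identification $\Theta_\psi^{-1}(\pi/4,\pi/4) = (\pi/4,\sqrt{2})$, and that is a one-line trigonometric check.
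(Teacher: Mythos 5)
Your proof is correct and follows exactly the route the paper intends: the paper states the corollary without proof as an immediate consequence of Proposition \ref{prop:M1M2posdef} combined with Eq.~\eqref{eq:balls2} of Proposition \ref{prop:intersectellipses}, and your argument is precisely that chain. The coordinate check $\Theta_\psi^{-1}(\pi/4,\pi/4)=(\pi/4,\sqrt{2})$ and the observation that the $(\pi/4,\pi/4)$ SVO-Nash point coincides with the prosocial optimum are the right pieces of bookkeeping that the paper leaves implicit.
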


\begin{remark}
Prop. 
\ref{prop:M1M2posdef} is quite restrictive in its assumptions and thus limited in its application; however, more general characterization of $\text{spec}(\Lambda_\psi)$ based on $M_1,M_2$ may be difficult.  Even if we just limit ourselves to the curve containing the prosocial solution ($\psi=\pi/4$, $H_\psi$ proportional to the identity), we can find symmetric $M_1,M_2$ such that $M_1M_2^{-1}$ is \emph{any} desired matrix \cite{bosch1986factorization}.  Neither \textbf{Assump-1a} nor \textbf{Assump-1b} guarantee anything about the spectra of $M_1$ or $M_2$ and thus it is not clear that we can state anything further about the spectrum of $M_1M_2^{-1}$.  Moving away from the case where $H_\psi$ is proportional to the identity complicates the matter even further. Any further guarantees on the eigenvalues that could be found could be quite useful given the pathological nature of the unbounded solutions caused by negative eigenvalues as detailed in the next section. 
There is much room for the industrious reader to go beyond these arguments to make more strict guarantees on the eigenvalues depending on the structure of the matrices $M_1,M_2,\mathbf{M}$ and $\mathbf{N}$ in their individual problems. \end{remark}





\subsection{Case 2: Unbounded SVO Equilibria}

\label{sec:blowups}

\subsubsection{Finite Blow-up Points}
In the case where $\text{spec}(\Lambda_\phi)\ngtr 0$ and $\text{spec}(\Lambda_\psi)\ngtr 0$, the SVO-equilibria curves $\Gamma_\phi(t)$ and $\Gamma_\psi(t)$ can blow up for a discrete number of finite values $t \in [0,\infty)$. 
 To quantify where and how these blow-ups occur, we state the following proposition that focuses on asymptotic behaviors of the Nash-expansion (\textbf{E1}), Eq. \eqref{eq:exp1sum}.  
Parallel characterizations can be written for the asymptotic behavior of Expansion \textbf{E2} in Prop. \ref{prop:playerexp} and Expansions \textbf{E3} and \textbf{E4} in Props. \ref{prop:theta1exp} and \ref{prop:theta2exp} in Appendix \ref{app:proofs34}.

 We use the notation $\text{spec}^-(\Lambda_\phi)$ to denote the negative eigenvalues of $\Lambda_\phi$. 
 \begin{proposition}
 \label{prop:asymp}
 Suppose $\text{spec}^-(\Lambda_\phi)$ is non-empty. 
 Let $\lambda_{\phi j} \in \text{spec}^-(\Lambda_\phi)$ have multiplicity denoted by the index set $\mathcal{J}$. The asymptotic behavior of $\Gamma_\phi(t)$ as $t\to |\lambda_{\phi j}|$ can be written as 
 \begin{subequations}
 \label{eq:asymp}
\begin{align}
\lim_{t\to |\lambda_{\phi j}|}
\Gamma_\phi(t) & = 
{{\mathbf{u}_\text{N}}} + \mathbf{u}_{\phi j}^{\text{fin}} + 
\lim_{t\to |\lambda_{\phi j}|}
\left(
1+\tfrac{\lambda_{\phi j}}{|\lambda_{\phi j}|} \right)^{-1}
\mathbf{u}_{\phi j}^{\text{inf}} \label{eq:exp1asymp} \\
\text{with} \ \ 
\mathbf{u}_{\phi j}^{\text{fin}} & = \sum_{i\notin \mathcal{J} } 
       W_{\phi i} 
\Big(1+
\tfrac{\lambda_{\phi i}}{|\lambda_{\phi j}|}\Big)^{-1}
V_{\phi i}^\top 
\Big[{{\mathbf{u}_\text{A}}} -{{\mathbf{u}_\text{N}}}
\Big]
\notag \\
\mathbf{u}_{\phi j}^{\text{inf}} & = 
\sum_{j \in \mathcal{J} } 
    W_{\phi j} 
V_{\phi j}^\top
\Big[{{\mathbf{u}_\text{A}}} -{{\mathbf{u}_\text{N}}}
\Big]
\notag
\end{align}
\end{subequations}
\end{proposition}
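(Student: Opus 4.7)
The plan is to start directly from the spectral decomposition of $G_\phi(t)$ given in Eq.~\eqref{eq:exp1sum} and partition the index sum into the repeated-eigenvalue block $\mathcal{J}$ (where $\lambda_{\phi i}=\lambda_{\phi j}$) and its complement. Applying $G_\phi(t)$ to $\mathbf{u}_\text{A}-\mathbf{u}_\text{N}$ and using that the scalar $t/(\lambda_{\phi i}+t)$ depends only on the eigenvalue, the common factor $t/(\lambda_{\phi j}+t)$ pulls outside the $\mathcal{J}$-sum, giving a clean separation
\[
G_\phi(t)[\mathbf{u}_\text{A}-\mathbf{u}_\text{N}] = \sum_{i\notin \mathcal{J}} \tfrac{t}{\lambda_{\phi i}+t}\, W_{\phi i} V_{\phi i}^\top [\mathbf{u}_\text{A}-\mathbf{u}_\text{N}] \;+\; \tfrac{t}{\lambda_{\phi j}+t}\sum_{i\in\mathcal{J}} W_{\phi i} V_{\phi i}^\top [\mathbf{u}_\text{A}-\mathbf{u}_\text{N}].
\]

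For the first (non-divergent) sum, the denominator $\lambda_{\phi i}+|\lambda_{\phi j}|$ is nonzero by the definition of $\mathcal{J}$, so each coefficient is continuous at $t=|\lambda_{\phi j}|$; direct substitution and rewriting the scalar as $(1+\lambda_{\phi i}/|\lambda_{\phi j}|)^{-1}$ yields exactly $\mathbf{u}_{\phi j}^{\text{fin}}$ after adding back $\mathbf{u}_\text{N}$. For the second sum, the pulled-out scalar equals $(1+\lambda_{\phi j}/t)^{-1}$, which diverges as $t\to |\lambda_{\phi j}|$ because $\lambda_{\phi j}<0$ forces $\lambda_{\phi j}/t\to -1$; the remaining vector factor is precisely the spectral projection $\sum_{i\in\mathcal{J}} W_{\phi i} V_{\phi i}^\top [\mathbf{u}_\text{A}-\mathbf{u}_\text{N}]=\mathbf{u}_{\phi j}^{\text{inf}}$.

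The main subtlety I would flag is the interpretation of the divergent factor in Eq.~\eqref{eq:exp1asymp}: the expression $(1+\lambda_{\phi j}/|\lambda_{\phi j}|)^{-1}$ is a formal $1/0$ symbol standing in for the blow-up scalar, so the statement is best read as the asymptotic equivalence that $\Gamma_\phi(t)$ leaves the finite point $\mathbf{u}_\text{N}+\mathbf{u}_{\phi j}^{\text{fin}}$ along the fixed direction $\mathbf{u}_{\phi j}^{\text{inf}}$ as $t\to |\lambda_{\phi j}|$, with the sign of the blow-up (i.e., whether the asymptote runs to $+\infty$ or $-\infty$ along that ray) determined by the side of approach: $t \uparrow |\lambda_{\phi j}|$ keeps $\lambda_{\phi j}/t$ strictly above $-1$, while $t\downarrow |\lambda_{\phi j}|$ keeps it strictly below. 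I would make this one-sided behavior explicit in the write-up rather than leaving it packaged inside a single limit symbol, since that is the only genuinely nonroutine point in the argument.

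I would close by noting that the same decomposition carries over verbatim to the $G_\psi(t)$ expansion, and to Expansions \textbf{E3} and \textbf{E4} in the appendix, because the argument uses only the common rational form $t/(\lambda+t)$ of the spectral multipliers and the assumed diagonalizability of the relevant matrix. This justifies the parenthetical remark that parallel asymptotic characterizations hold for the other three expansions, with the appropriate substitutions $(\Lambda_\phi, \mathbf{u}_\text{N}, \mathbf{u}_\text{A})\mapsto (\Lambda_\psi, \mathbf{u}_1,\mathbf{u}_2)$ etc.
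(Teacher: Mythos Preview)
Your proposal is correct and is precisely the intended argument: the paper does not supply a separate proof of this proposition, treating it as an immediate consequence of substituting $t\to|\lambda_{\phi j}|$ into the spectral sum \eqref{eq:exp1sum} and splitting off the indices in $\mathcal{J}$. Your added remarks on the formal $1/0$ interpretation of the divergent factor and the one-sided sign of the blow-up are in fact more careful than the paper's own treatment, and would be worth retaining.
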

The limits in Prop. \ref{prop:asymp} give an explicit characterization of how the SVO equilibria blows up as $t \to |\lambda_{\phi j}|$ for $\lambda_{\phi j}<0$ and thus $(\lambda_{\phi j}/|\lambda_{\phi j}| + 1)\to 0$. Note that since $t \in [0,\infty)$ these limit points are only a problem for any eigenvalues 
$\lambda_{\phi j}$ that are real and negative. 
 Since there are a finite number of eigenvalues, the curve $\Gamma_\phi(t)$ blows up at a finite number of discrete values of $t \in [0,\infty)$.  
The asymptotic blow-up direction for a negative real eigenvalue $\lambda_{\phi j}$ with multiplicity defined by an index set $\mathcal{J}$ is given by $\mathbf{u}_{\phi j}^\text{inf}$ which is a linear combination of the corresponding right eigenvectors of $G_\phi(t)$, $W_{\phi j}$ for $j \in \mathcal{J}$.  The specific coefficients of that linear combination are given by $V_{\phi j}^\top \big[\mathbf{u}_\text{A}-\mathbf{u}_\text{N}\big]$ for $j \in \mathcal{J}$ where $V_{\phi j}$ is the corresponding left eigenvector.

\begin{remark}
\label{rem:forlorn}
The implications of the above arguments are somewhat surprising. Specifically, the existence of negative eigenvalues implies that various social value orientations even in the altruistic regime, $(\theta_1, \theta_2) \in [0,\pi/2] \times  [0,\pi/2]$,
can cause the Nash equilibria to go unbounded. Intuitively this means that agents seeking to help each other in their optimization problems may cause unbounded actions.  Further, the structure of the argument indicates that for a given $\phi$, the number of problematic SVO values is finite and it is not clear where they will occur without a careful look at the above analysis, ie. agents may stumble onto them accidentally while experimenting with various cooperation values.  The problematic implications for designing cooperative schemes especially between humans and automata are obvious.  One could hope that these negative eigenvalues may never exist, but we will show with numerous counter-examples that they can exist and quite often will.
\end{remark}

\begin{remark}
By segmenting the eigenvalues of $\Lambda_\phi$ and $\Lambda_{\psi}$ into positive and negative eigenvalues and considering the corresponding eigenvectors, it would be straightforward to combine Props. \ref{prop:intersectellipses} and \ref{prop:intersectellipses4} with Prop. \ref{prop:asymp} to characterize the SVO-equilibria as being inside ellipses in the directions with positive eigenvalues and potentially blowing up in the directions with negative eigenvalues similarly to the way the state space of linear system can be divided into stable and unstable manifolds. For simplicity and easy of presentation, we do not explicitly spell out these constructions. 
\end{remark}



\subsubsection{Unbounded SVO Equilibria: Basic $2\times2$ Examples}
\label{sec:2Dcounter}.  
As mentioned in Remark \ref{rem:forlorn}, even simple two player scalar games can generate this unbounded behavior.  We give a simple example here visualizations to illustrate the blow up dynamic.   To this end we consider costs of the form 
\begin{align*}
    J_1 = \tfrac{1}{2}(u-\bar{u}_1)^\top 
    M_1
    (u-\bar{u}_1), \ \ 
J_2 = \tfrac{1}{2}(u-\bar{u}_2)^\top 
    M_2
    (u-\bar{u}_2)
    \end{align*}
with $\bar{u}_1 = (0.2,1.0)$ and $\bar{u}_2 = (1.0,0.2)$ and 
\begin{align}
M_1 
=
\begin{bmatrix}
1.2 & -1. \\ -1. & 1.
\end{bmatrix}, \ \ 
M_2
=
R(\gamma)
\begin{bmatrix}
1. & 1. \\ 1. & 1.2
\end{bmatrix}
R(\gamma)^\top 
\label{eq:rotexamples}
\end{align}
where $R(\gamma)$ is a $2 \times 2$ counter-clockwise rotation by angle 
for $\gamma \in [0,2\pi]$.  In Figs. \ref{fig:2x2bounded} and \ref{fig:2x2blowups} 
we show the set of SVO equilibria as the angle $\gamma$ varies with $\gamma = [\pi/8,0,-\pi/8]$ in Fig. \ref{fig:2x2bounded} and $\gamma = [7\pi/16,8\pi/16,9\pi/16]$ in Fig. \ref{fig:2x2blowups}.
Note how when the eigenvalues are not positive the SVO equilibria set blows up.  Note also the geometric nature of these counter examples and how the shape of the level sets interact with each other.  


\begin{figure}
    \centering
    \includegraphics[width=0.85\textwidth]{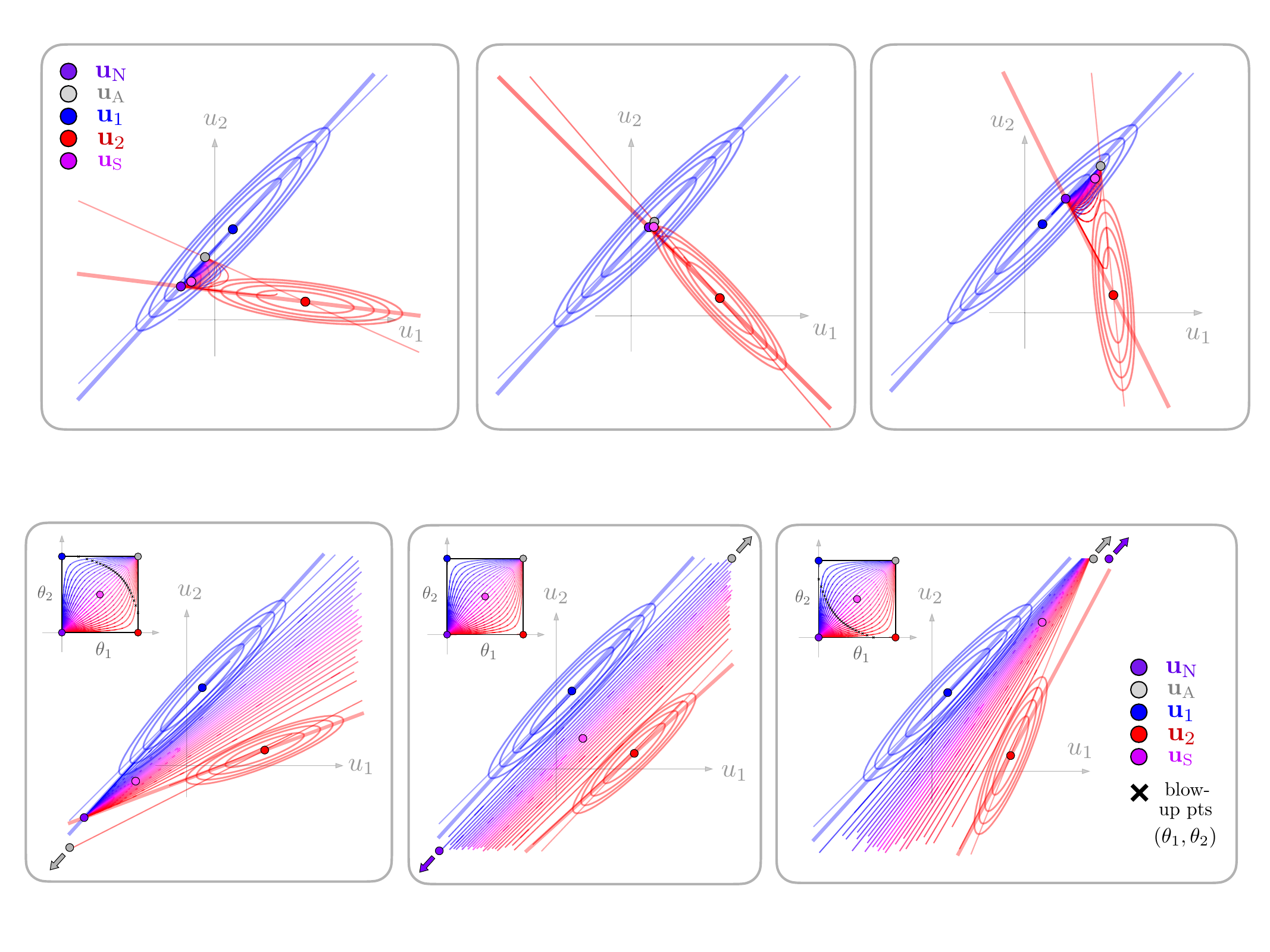}
    \caption{
    Set of SVO equilibria for $M_1$,$M_2$ in Eq. \eqref{eq:rotexamples} with (from left-to-right) $\gamma = \pi/8$, $\gamma = 0$, and $\gamma = -\pi/8$. In these examples both $\text{spec}(\Lambda_\phi) > 0$ and 
    $\text{spec}(\Lambda_\psi) > 0$ and the set of SVO-equilibria is quite localized. }
    \label{fig:2x2bounded}
\end{figure}

\begin{figure}
    \centering
    \includegraphics[width=0.85\textwidth]{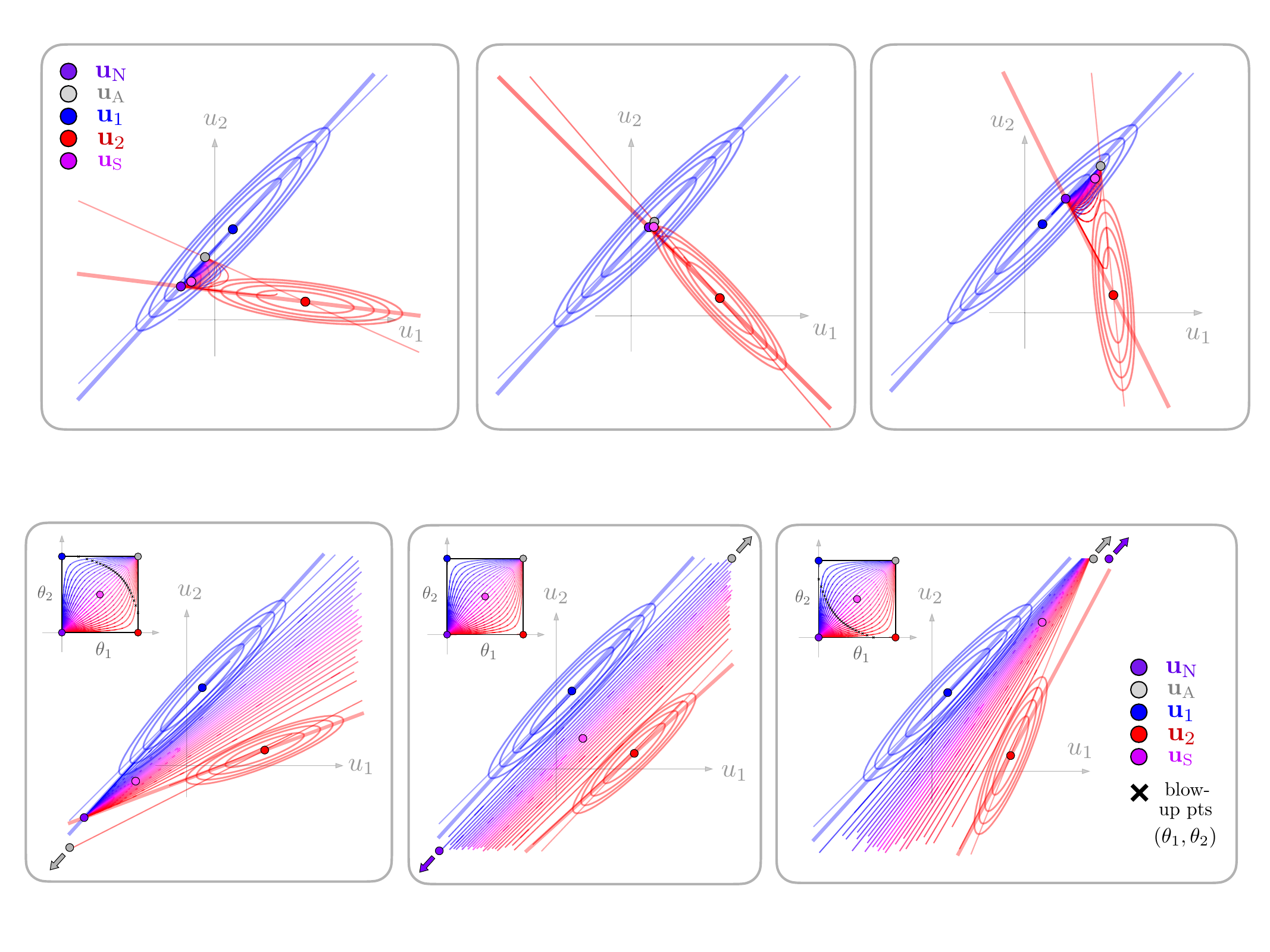}
\caption{Set of SVO equilibria (curves $\Gamma_\phi(t)$ for $M_1$,$M_2$ in Eq. \eqref{eq:rotexamples} with (from left-to-right) $\gamma = 7\pi/16$, $\gamma = 8\pi/16$, and $\gamma = 9\pi/16$. In these examples $\text{spec}(\Lambda_\phi) \ngtr 0$ and 
 and there are finite-blowup points for most values of $\phi$ as illustrated in the inset. Note how in the left and right figures both the Nash $\mathbf{u}_\text{N}$ and altruistic Nash are close to each other even tho they are at opposite ends of each curve $\Gamma_\phi(t)$. 
 Intuitively, the blow-up points are the values of $t$ for which the curves $\Gamma_\phi(t)$ cross infinity and come back around to the other side. }
    \label{fig:2x2blowups}
\end{figure}

    



\section{Applications: Open-loop LQ Games}

\label{sec:openloopLQ}

We now consider SVO-equilibria shifts in a two-player linear quadratic open-loop trajectory planning problem.  We first detail the linear time varying (LTV) open-loop problem and then apply it to a two-agent collision avoidance problem in 2D with single integrator dynamics for each agent.  

\subsection{Open-Loop LTV-Quadratic Games}

Consider the discrete time LTV system at finite time steps $k \in [0,\dots,K]$ with state $x_k \in \mathbb{R}^{n}$ at time $k$ and individual control inputs for two players at time $k$ for each player $u_{ik} \in \mathbb{R}^{m_i}$. Note that the state vector $x_k$ will often have components corresponding to the states of each individual player but we will store them in a joint state vector.  Define (discrete time) state transition matrix $F_k \in \mathbb{R}^{n \times n}$ and control input matrices $G_{ik} \in \mathbb{R}^{n \times m_i}$.
\begin{align*}
x_{k+1} = F_k x_k + G_{1k} u_{1k} + G_{2k} u_{2k}, \quad x_0 \ \text{given}
\end{align*}
for $k= 0,\dots,K-1$
with rolled out state trajectory $x = \big(x_k\big)_{k=1}^K \in \mathbb{R}^{nK}$. 
$u_i  = \big(u_{ik}\big)_{k=0}^{K-1} \in \mathbb{R}^{m_iK}$
where $d_1 = K\cdot m_1$ and $d_2 = K\cdot m_2$. 
Define matrices 
\begin{align*}
\mathbf{H} = 
\begin{bmatrix}
F_0 \\
F_1 F_0 \\
\vdots \\
\prod_{k=0}^K F_{k}
\end{bmatrix}, \quad 
\begin{matrix}
\mathbf{G}_i
= \textbf{blkdg}\Big(G_{i0},\dots,G_{i(K-1)}\Big), 
\end{matrix}
\end{align*}
and trajectory rollout matrices 
\begin{align*}
\mathbf{F} & = 
\begin{bmatrix}
I & 0 & 0 & \cdots & 0 \\
F_1 & I & 0 & \cdots & 0 \\
F_2F_1 & F_1 & I & \cdots & 0 \\
\vdots & \vdots & \vdots & \ddots & \vdots \\
\prod_{k=1}^{K-1} F_{k}
& 
\prod_{k=1}^{K-2} F_{k}
&
\prod_{k=1}^{K-3} F_{k}
& \cdots & I
\end{bmatrix}, 
\end{align*}
Note that the rolled out dynamcis can be compactly written as 
\begin{align*}
x = \mathbf{H}x_0 + \mathbf{F}\Big(\mathbf{G}_1u_1 + \mathbf{G}_2u_2\Big)
\end{align*}
Define running cost state matrices $Q_{ik} \in \mathbb{R}^{n \times n}$ and control cost matrices $R_{ik} \in \mathbb{R}^{m_i \times m_i}$ 
 for each player and combined block diagonal cost matrices 
\begin{align*}
&
\begin{matrix}
\mathbf{Q}_{i} = \textbf{blkdg}\Big[Q_{i1},\dots,Q_{iK} \Big]
\end{matrix},
\ 
\begin{matrix}
\mathbf{R}_{i} = \textbf{blkdg}\Big[R_{i0},\dots,R_{i(K\text{-}1)} \Big] 
\end{matrix}
\end{align*}
Define also the desired state trajectory for player $i$, $\bar{x}^i = (\bar{x}^i_k)_{k=1}^{K}$.  Note that the desired trajectory for each player contains the full state, not just their individual component of the state. 
Define the following costs for each player
\begin{align*}
J_i & = \sum_{k=1}^K \tfrac{1}{2}\big(x_k-\bar{x}_k^i\big)^\top Q_{ik}\big(x_k-\bar{x}_k^i\big) + \sum_{k=0}^{K-1}\tfrac{1}{2} u_{ik}^\top R_{ik}u_{ik} 
\\
& 
= \tfrac{1}{2}\big(x
-\bar{x}^i\big)^\top \mathbf{Q}_i \big(x-\bar{x}^i\big) + \tfrac{1}{2}u_i^\top \mathbf{R}_i u_i
\end{align*}
Using the rolled out form of the trajectories, we get cost matrices of the form. 
\begin{align*}
A_i & = \mathbf{R}_i + \mathbf{G}_i^\top \mathbf{F}^\top \mathbf{Q}_i \mathbf{F}\mathbf{G}_i, 
\quad 
a_i^\top = \big[\mathbf{H}x_0 - \bar{x}^i\big]^\top 
\mathbf{Q}_i \mathbf{F}\mathbf{G}_i \\
B_i & = \mathbf{G}_{-i}^\top \mathbf{F}^\top \mathbf{Q}_i \mathbf{F}\mathbf{G}_i, \quad \qquad \  
b_i^\top = \big[\mathbf{H}x_0-\bar{x}^i\big]^\top 
\mathbf{Q}_i \mathbf{F}\mathbf{G}_{-i} \\
D_i & = \mathbf{G}_{-i}^\top \mathbf{F}^\top \mathbf{Q}_i \mathbf{F}\mathbf{G}_{-i}
\end{align*}
Note the constant terms $\big[\mathbf{H}x_0- \bar{x}^i \big]^\top \mathbf{Q}_i\big[\mathbf{H}x_0- \bar{x}^i \big]$ have been removed from the costs. 
 Note specifically that in general, the cost terms $D_i$ are non-zero from the effect of the other players control action on the joint state.  
 
\subsection{Single-Integrator Trajectory Coordination}
\label{sec:lqcounter}

\subsubsection{Trajectory Parameters}
We now apply the above open-loop LQ-game framework to a simple two-agent trajectory coordination problem in 2D with single integrator dynamics for each agent.  Even in this simple example, we will see that for certian SVO values the SVO Nash-equilibria can go unbounded.  We will also derive some basic intuition for what the problematic blow-up points look like as inspiration for investigation in future work.  

We consider a time horizon of $K=50$ with time step $\Delta t = 0.04$. We consider a state vector $x_k \in \mathbb{R}^4$ where the first two elements are the position of player 1 and the last two elemenst are the position of player 2. 
Each player has full control over their velocity, ie. $u_{ik} \in \mathbb{R}^2$ and 
\begin{align*}
F_k = I_{4 \times 4}, \quad
G_{1k} = \Delta t \begin{bmatrix} I_{2 \times 2} \\ 0_{2 \times 2} \end{bmatrix}, \ \ 
G_{2k} = \Delta t \begin{bmatrix} 0_{2 \times 2} \\ I_{2 \times 2} \end{bmatrix}
\end{align*}
We take the following desired initial and terminal conditions $\bar{x}_0 = (-1.,0.,0.,-1.)$ and $\bar{x}_K = (1.,0.,0.,1.)$ and compute the desired trajectory (for both agents) by linearly interpolating between them. 
\begin{align*}
\bar{x}_k^i  = \big(\tfrac{k}{K}\big)\cdot \bar{x}_0 + \big(1-\tfrac{k}{K}\big)\cdot \bar{x}_K, \quad \text{for} \quad i=\{1,2\}
\end{align*}
These desired trajectories are illustrated in Fig. \ref{fig:lqbase}.

\begin{figure}
    \centering
    \includegraphics[width=0.65\textwidth]
    {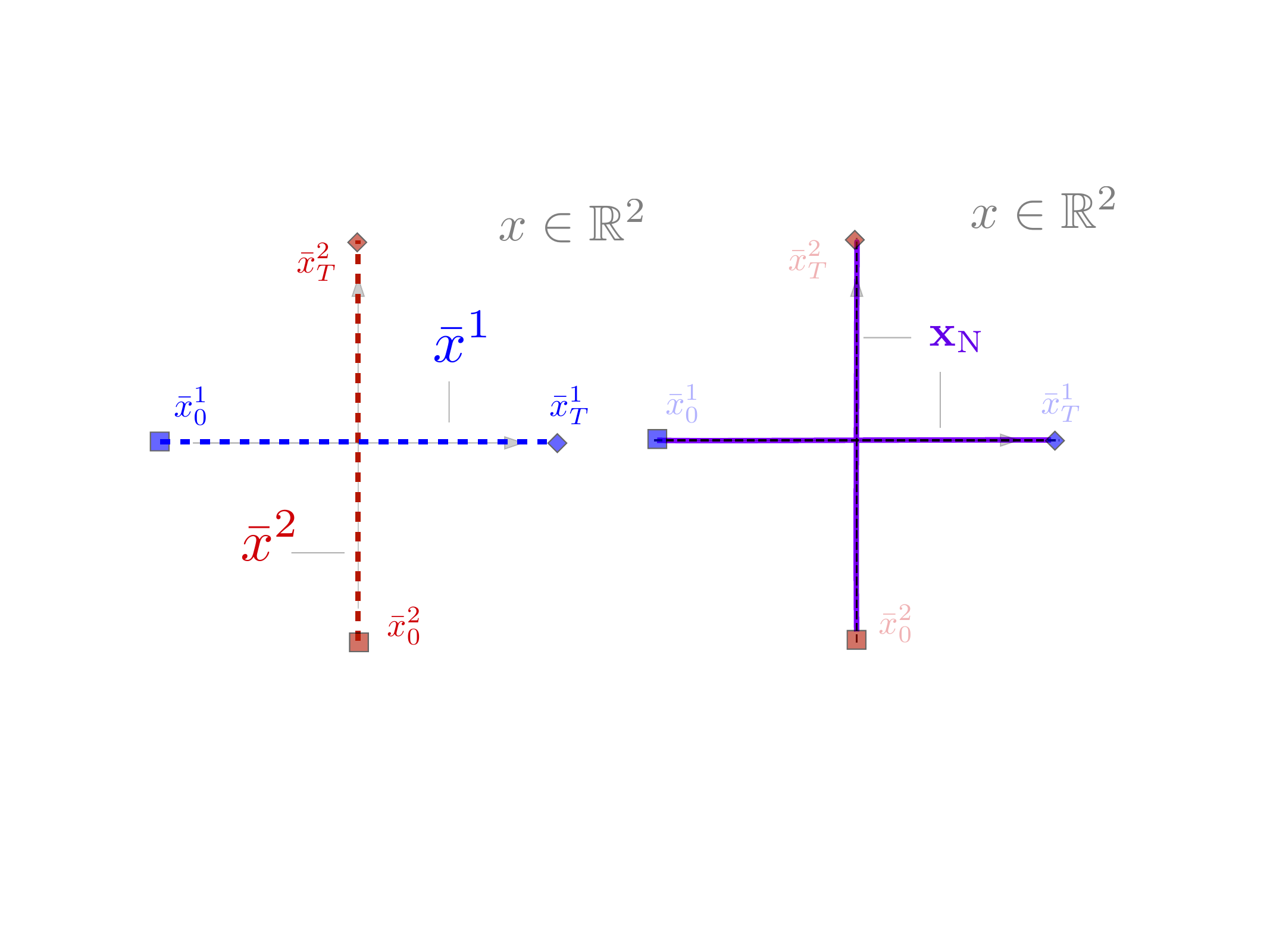}
    \caption{(Left) Desired trajectories $\bar{x}$ from initial conditions $\bar{x}_0 = (-1,0,0,-1)$ to desired terminal conditions $\bar{x}_K = (1,0,0,1)$. (Right) Original Nash equilibrium $\mathbf{x}_\text{N}$.  Note that since the tracking cost is relatively large, the Nash tracks almost exactly with the desired trajectories.}
    \label{fig:lqbase}    
\end{figure}

We set the cost for each agent to include a control effort cost, a tracking cost, and a proximity cost that penalizes agents being close to each other.
\begin{align*}
J_i
& = \sum_k
\big(x_{k}-\bar{x}^i_k\big)^\top Q^{\text{trk}}_{ik}
\big(x_{k}-\bar{x}^i_k\big)
+ 
u_{ik}^\top R_{ik}u_{ik} 
- 
x_k^\top Q_{ik}^{\text{prx}}x_k
\end{align*}
where for $i=\{1,2\}$, the general forms of the cost matrices is
$R_{ik} = r_{ik}I$,
and 
\begin{align*}
\begin{matrix}
Q^\text{trk}_{1k} =
\textbf{blkdg}\big(
 q^\text{trk}_{11}I, \  q^\text{trk}_{12}I
\big) \\
Q^\text{trk}_{2k} =
\textbf{blkdg}\big(
 q^\text{trk}_{21} I, \  q^\text{trk}_{22} I
\big)\Big.
\end{matrix},
\quad
Q^\text{prx}_{ik}
= 
q^\text{prx}_{ik}
\begin{bmatrix}
I & - I \\
-I & I
\end{bmatrix}
\end{align*}
 where the identity matrices are all $2 \times 2$.  The control scale parameters are given by $r_{ik} = 1.0$; and the tracking and proximity parameters are given by 
\begin{align*}
q^\text{trk}_{ii} =
\begin{cases}20 &; \ k < 45 \\ 30 &; \  45 \leq k \leq 50 
\end{cases}, \qquad 
q^\text{prx}_{ik} = 6.0
\end{align*}
where we penalize the later time steps more to ensure that that the terminal state is reached.  The other tracking scaling parameters 
$q^\text{trk}_{12} = q^\text{trk}_{21} = 0.1$ are regularization term so that the matrix $\mathbf{N}$ is invertible.   Note that the costs are symmetric for the two players in order to keep the problem simple but this could easily be changed.

We denote the original Nash equilibrium trajectories from $\mathbf{u}_\text{N}$, as 
$$\mathbf{x}_\text{N} = \mathbf{F}\big[\mathbf{G}_1 \ \mathbf{G}_2\big]\mathbf{u}_\text{N} + \mathbf{H}x_0$$
Similarly for a patricular SVO-equilibrium $\mathbf{u}_\theta$, we can denote the resulting state trajectory as 
$$\mathbf{x}_\theta = \mathbf{F}\big[\mathbf{G}_1 \ \mathbf{G}_2\big]\mathbf{u}_\theta + \mathbf{H}x_0 = 
\mathbf{F}\big[\mathbf{G}_1 \ \mathbf{G}_2\big]\Gamma_\phi(t) + \mathbf{H}x_0
$$
for $(\phi,t)= \Theta_\phi^{-1}(\theta)$.  
In order to visualize the asymptotic behavior in the trajectory space, 
let
\begin{align*}
\mathbf{x}_{\phi j}^\text{fin} = \mathbf{F}\big[\mathbf{G}_1 \ \mathbf{G}_2\big]\mathbf{u}_{\phi j}^\text{fin}, \qquad 
\mathbf{x}_{\phi j}^\text{inf} = \mathbf{F}\big[\mathbf{G}_1 \ \mathbf{G}_2\big]\mathbf{u}_{\phi j}^\text{inf}, \qquad 
\end{align*}
and 
we then have that for $\mathbf{u}_\theta = \Gamma_\phi(t)$ as $t \to |\lambda_{\phi j}|$ 
\begin{align*}
\mathbf{x}_\theta
& = 
\lim_{t \to |\lambda_{\phi j}| } 
\mathbf{F}\big[\mathbf{G}_1 \ \mathbf{G}_2\big]\Gamma_\phi(t) + \mathbf{H}x_0  \\
& = \mathbf{x}_\text{N} + \mathbf{x}_{\phi j}^\text{fin} + 
\lim_{t\to |\lambda_{\phi j}|}
\left(
1+\tfrac{\lambda_{\phi j}}{|\lambda_{\phi j}|} \right)^{-1}
\mathbf{x}_{\phi j}^{\text{inf}} 
\end{align*}
for any $\lambda_{\phi j} \in \text{spec}^-(\Lambda_\phi)$.

\subsubsection{Trajectory Discussion}


\begin{figure}
    \centering
    \includegraphics[width=0.5\textwidth]
    {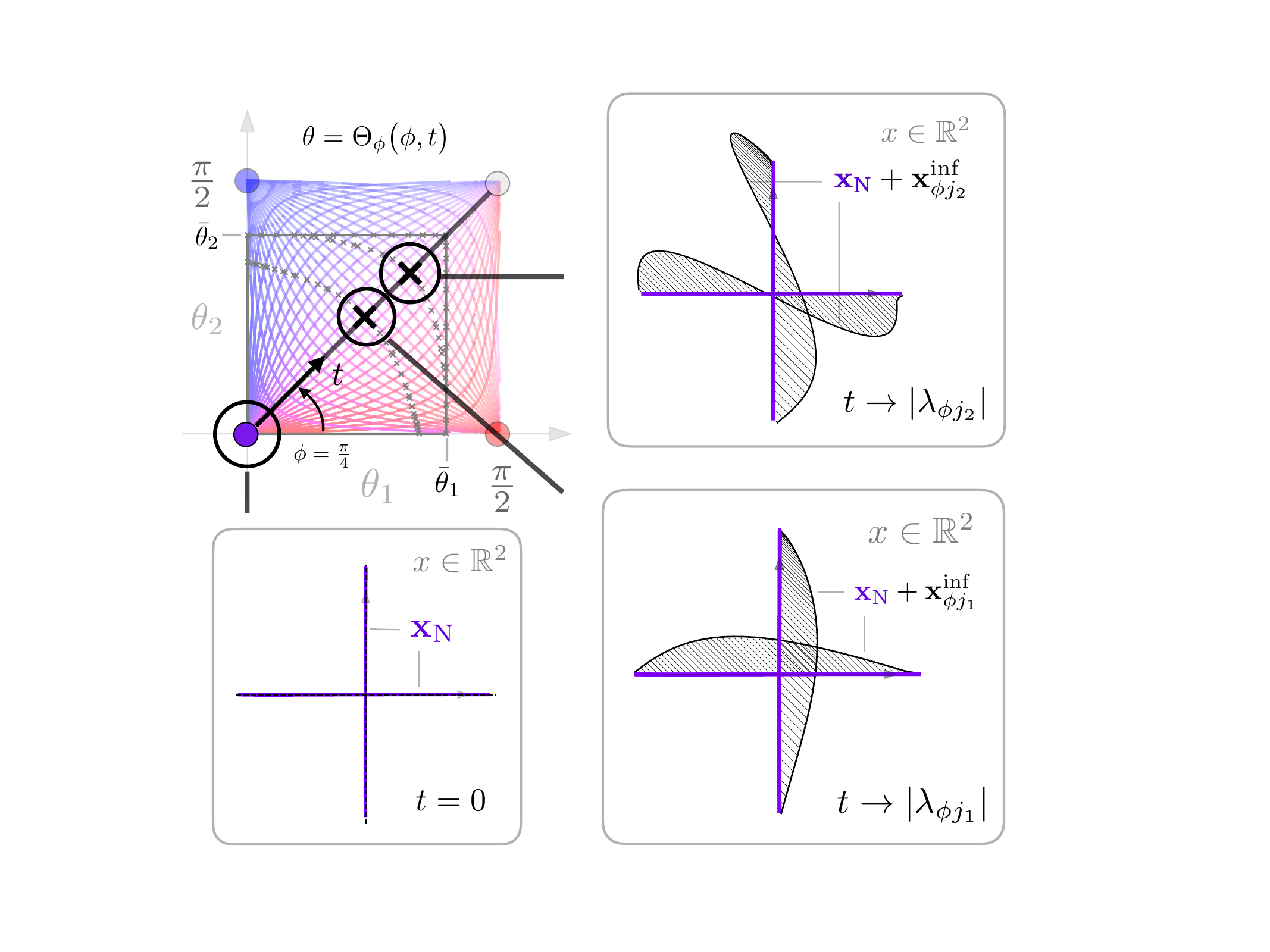}
\caption{
Problematic SVO values that cause unbounded SVO-equilibria. We focus on the curve $\phi = \pi/4$.   The insets show the asymptotic blow directions $\mathbf{u}_{\phi j_1}$ for $\lambda_{\phi j_1} = -1.1$ (bot-right) and $\mathbf{u}_{\phi j_2}$ for $\lambda_{\phi j_2} = -2.2$. (top-right). $\bar{\theta}_i$ give the maximum $\theta_i$ values for which the game is still well-posed. 
}
    \label{fig:lqexamples2}    
\end{figure}

\begin{figure}
    \centering
    \includegraphics[width=0.5\textwidth]
    {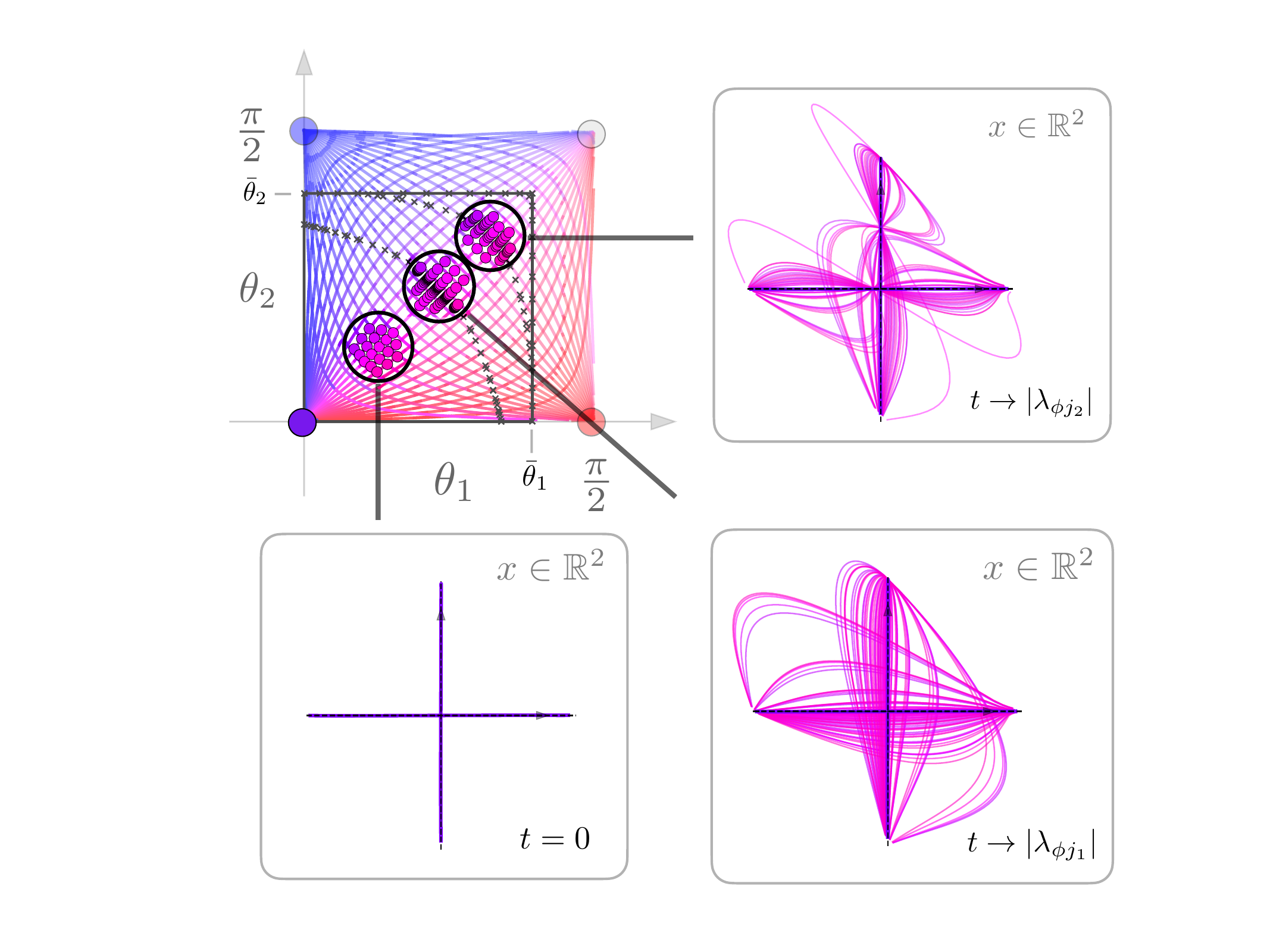}
\caption{
Sample SVO value around the problematic blow-up points on the curve $\phi = \pi/4$.  Note that for SVO values around $\theta = (\pi/8,\pi/8)$ (bot-left) the trajectories are still quite close to the original Nash trajectories.  For SVO-values sampled close to 
$t = \lambda_{\phi j_1} = -1.1$ (bot-right), we can see the trajectories get close to blowing up along the direction
$\mathbf{u}_{\phi j_1}$ (as shown in Fig.~\ref{fig:lqexamples2}) Similarly for SVO-values sampled close to 
$t = \lambda_{\phi j_1} = -2.2$ (top-right), we can see the trajectories get close to blowing up along the direction
and $\mathbf{u}_{\phi j_2}$ (as shown in Fig.~\ref{fig:lqexamples2}).  $\bar{\theta}_i$ give the maximum $\theta_i$ values for which the game is still well-posed.  }
    \label{fig:lqexamples1shexpansion}    
\end{figure}

Given the costs outlined above, the open-loop LQ game is well-posed for $\theta_i \leq 3\pi/8$.  For most values of $\phi$ there are two values of $t$ that cause the curve $\Gamma_\phi(t)$ to blow-up while the $\theta$ remains well-posed.  We focus on the curve where $\phi = \pi/4$ (ie. $\theta_1 = \theta_2$) since it is characteristic of the other curves and it contains the prosocial equilibrium $(\theta_1,\theta_2) = (\pi/4,\pi/4)$.  The blow-up values $\theta$ and the corresponding blow-up trajectory directions $\mathbf{x}_{\phi j}^\text{inf}$ are illustrated in Fig. \ref{fig:lqexamples2}.  In Fig. \ref{fig:lqexamples1shexpansion}, we show sampled values of $\theta$ close to the different blow up points.  Note that for $\theta = (\pi/8,\pi/8)$ the SVO-equilibrium trajectories are quite close to the Nash trajectories but then as they approach the two blow-up points shown the SVO-trajectories exhibit more erratic behavior in the blow-up directions as predicted.  In particular, empirically it seems like larger values $\theta$ cause higher frequency blow-up modes.  In future work, we hope to examine this phenomenon.  

\section{Discussion \& Conclusion}

\label{sec:conclusion}

In this paper, we have shown how Nash equilibria can shift under social value orientation in two player quadratic games (Section \ref{sec:svonash}) including open-loop linear time varying quadratic games.  
Under certain limited conditions presented in Section \ref{sec:contraction}, the new SVO-Nash equilibria remain bounded.  However, somewhat surprisingly as discussed in Section \ref{sec:blowups}, even in the cooperative regime, $\theta_i \in [0,\pi/2]$, different social value orientations can cause the Nash equilibrium to shift dramatically and even go unbounded at discrete cooperation levels.  
 Our empirical results indicate that these pathological unbounded solutions occur quite often in practice and 
could potentially cause problems for multi-agent coordination schemes unless care is taken.  

There are several interesting directions for further research. 
First, the stability of the SVO-equilibria in different regions of the cooperation space 
should be studied under various action-update schemes.  This characterization will be particularly important to understand how and when update schemes can converge to pathological equilibria. 
Second, initial empirical results seem to indicate that the pathological blowup points divide the cooperation space into discrete regions that each have topologically different types of solutions and that specifically for the case of LTV dynamic games, higher levels of cooperation can lead to behaviors with higher frequency oscillation modes.  Further work should be done to explore this phenomena and see how it can be leveraged for spatial and trajectory coordination.  
Finally, the expansions given could potentially be leveraged for learning problems where the goal is to estimate agents' costs and/or cooperation levels in two player competitive settings.  The asymptotic behavior detailed in Section \ref{sec:blowups} indicates that for any specific SVO-Nash equilibrium, the directions (eigenvectors) corresponding to the closest blow-up points in the cooperation space will have an out-sized effect on the nature of the SVO-equilibrium compared to the other eigenvectors.  This fact could be leveraged to do significant dimensionality reduction in estimation problems.

\section{Appendix}
\label{sec:appendices}

\subsection{Coordinate Transform Proofs of Prop. \ref{prop:coords}}
\label{app:coords12}

\begin{proof}
It suffices to define each inverse and note that they are valid on each transformation's co-domain $(\theta_1,\theta_2) \in (0,\pi/2) \times (0,\pi/2)$. 
\begin{align*}
\textbf{E1:} & \ \ (\phi,t) = 
\Theta_\phi^{-1}(\theta) = 
\left( \text{atan}
\left(
\frac{\tan \theta_2}{\tan \theta_1}
\right),
\frac{\tan \theta_1}{\cos\phi
}
= \frac{\tan \theta_2}{\sin\phi}\right) \\
\textbf{E2:} & \ \ (\psi,t) = 
\Theta_{\psi}^{-1}(\theta)
= 
\left(
\text{atan}
\left(
\frac{\cot \theta_2}{\tan \theta_1}
\right),
\frac{\tan \theta_1}{\cos\phi}
= \frac{\cot \theta_2}{\sin\phi}\right) \\
\textbf{E3:} & \ \ 
(\theta_1,t) 
= 
{\Theta_{1}}^{-1}(\theta) = \Big(\theta_1, 
\tan \theta_2 \Big) \\
\textbf{E4:} &  \ \ (t, \theta_2) = 
{\Theta_2}^{-1}(\theta) =
\Big(\tan \theta_1, \theta_2\Big)
\end{align*}
\end{proof}

\subsection{Expansions 3 \& 4}
\label{app:exp34}

In this section, we give the $\theta_1$-expansion ($\mathbf{E3}$) and the 
$\theta_2$-expansion ($\mathbf{E4}$). For each of these expansions we simply fix either $\theta_1$ or $\theta_2$ to generate curves and then sweeping $t$ to modify the other players cost.  

\begin{remark}
We give these expansions since the coordinate transformations \textbf{E3} and \textbf{E4} in Prop. \ref{prop:coords} are simpler and more straightforward. However in the analysis we will see that rather than isolating the affect of the SVO values to the matrices $G_\phi(t)$ and $G_\psi(t)$ as in Expansions \textbf{E1} and \textbf{E2}, the SVO values change every term in the expansion.  In addition the matrices $Z_{\theta_1}N_1^\top $ and 
$Z_{\theta_2}N_2^\top $ (detailed below) that have roles parallel to $\mathbf{M}H_\phi^{-1}\mathbf{N}^{-1}$ and $M_1H_\psi^{-1} M_2^{-1}$ are low rank and thus we can't make clean contraction arguments such as the ones detailed in Sec. \ref{sec:contraction}.  We include these expansions for the sake of being thorough and also since they provide more insight into parallel results for Stackelberg equilibria to be explored in future work. 
\end{remark}

In order to detail expansions \textbf{E3} and \textbf{E4}, we define 
\begin{align*}
    \mathbf{a}_{\theta_1} & = 
    \begin{bmatrix} \text{c} \theta_1 a_1+ \text{s} \theta_1 b_2  \\  a_2
    \end{bmatrix}, \ \
    \mathbf{M}_{\theta_1} = 
    \begin{bmatrix}
    \text{c}\theta_1 A_1 + \text{s} \theta_1 D_2 & B_2  \\
    \text{c}\theta_1 B_1 + \text{s}\theta_1 B_2^\top & 
    A_2
    \end{bmatrix} \\
    \mathbf{a}_{\theta_2} & = 
    \begin{bmatrix} 
    a_1 \\
    \text{c} \theta_2 a_2+ \text{s} \theta_2 b_1  
    \end{bmatrix}, \ \
    \mathbf{M}_{\theta_2}= 
    \begin{bmatrix}
    A_1 & 
    \text{c}\theta_2 B_2 + \text{s} \theta_2 B_1^\top \\
    B_1 & \text{c}\theta_2 A_2 +
    \text{s}\theta_2 D_1
    \end{bmatrix}
\end{align*}
and 
$
\mathbf{u}_{\theta_1}^\top  = -\textbf{a}_{\theta_1}^\top \mathbf{M}_{\theta_1}^{-1}
$, $
\mathbf{u}_{\theta_2}^\top = -\textbf{a}_{\theta_2}^\top \mathbf{M}_{\theta_2}^{-1} 
$. At several points in what follows we will want to use different portions of $\mathbf{M}_{\theta_1}^{-1}$and $\mathbf{M}_{\theta_2}^{-1}$.  Specifically we will define
\begin{align}
\mathbf{M}_{\theta_1}
^{-1}
& 
= 
\begin{bmatrix}
- \Big. Y_{\theta_1}^\top - \\
- Z_{\theta_1}^\top -  
\end{bmatrix}, \ \ 
\mathbf{M}_{\theta_2}
^{-1}
= 
\begin{bmatrix}
- \Big. Z_{\theta_2}^\top  -  \\
- Y_{\theta_2}^\top  - 
\end{bmatrix}
\label{eq:C3C4inv}
\end{align}
with $Y_{\theta_1}^\top,Z_{\theta_2}^\top \in \mathbb{R}^{d_1 \times d}$ and 
$Z_{\theta_1}^\top,Y_{\theta_2}^\top \in \mathbb{R}^{d_2 \times d}$
and also define $N_1^\top = \big[B_1 \ D_1^\top \big]$ and $N_2^\top = \big[D_2^\top \ B_2 \big]$.

We can now state the third expansion in the following proposition.
\begin{proposition}[\textbf{E3:} $\theta_1$-Expansion ]
\label{prop:theta1exp}
The SVO-equilibria $\mathbf{u}_\theta$ is given by 
\begin{align}
\mathbf{u}_\theta & = 
\mathbf{u}_{\theta_1} - G_{\theta_1}(t)
\Big[
Z_{\theta_1}b_1 +
Z_{\theta_1}N_1^\top 
\mathbf{u}_{\theta_1}
\Big] \notag 
\end{align}
where for $(\theta_1,t) \in (0,\pi/2) \times (0,\infty)$
\begin{align*}
G_{\theta_1}(t) 
& = 
\Big(\tfrac{1}{t}I + 
Z_{\theta_1}N_1^\top \Big)^{-1}
= 
\sum_i W_{i\theta_1}
\left(\tfrac{1}{t}+\lambda_{i\theta_1}\right)^{\text{-}1}
V_{i\theta_1}^\top 
\end{align*}
where $\big[N_1Z_{\theta_2}^\top \big]$ has eigenvalues $\Lambda_{\theta_1} = \{\lambda_{\theta_1 i}\}_{i=1}^d$, right-eigenvectors, 
$\{V_{\theta_1 i} \}_{i=1}^d$ and left-eigenvectors 
$\{W_{\theta_1 i}^\top \}_{i=1}^d$.
\end{proposition}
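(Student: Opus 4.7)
The plan is to start by applying the \textbf{E3} coordinate change and rewriting the SVO-equilibrium equation as a rank-$d_2$ perturbation of the fixed-$\theta_1$ system, then inverting via Sherman-Morrison-Woodbury. Under $\theta_2 = \operatorname{atan} t$, we have $\cos\theta_2 \ne 0$, so we may divide player 2's optimality equation (the second block row of \eqref{eq:basicequilibinv}) by $\cos\theta_2$ without changing the solution. Let $E_2 = \bigl[\begin{smallmatrix}0_{d_1\times d_2}\\ I_{d_2\times d_2}\end{smallmatrix}\bigr]$ and recall $N_1 = [B_1^\top;\, D_1]$. After this rescaling, the second block column of the system matrix becomes $\bigl[\begin{smallmatrix}B_2\\A_2\end{smallmatrix}\bigr] + t N_1$, and the second block of the constant vector becomes $a_2 + t b_1$; so the equilibrium condition \eqref{eq:equilibcond} specializes to
\begin{align*}
\mathbf{u}_\theta^\top\bigl(\mathbf{M}_{\theta_1} + t\,N_1 E_2^\top\bigr) + \bigl(\mathbf{a}_{\theta_1} + t\,E_2 b_1\bigr)^\top = 0,
\end{align*}
i.e. $\mathbf{u}_\theta = -(\mathbf{M}_{\theta_1}^\top + t E_2 N_1^\top)^{-1}(\mathbf{a}_{\theta_1} + t E_2 b_1)$.

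Next, I would apply the Woodbury identity with $A = \mathbf{M}_{\theta_1}^\top$, $U = tE_2$, $V^\top = N_1^\top$. The block partition \eqref{eq:C3C4inv} immediately gives the key simplification $\mathbf{M}_{\theta_1}^{-\top}E_2 = Z_{\theta_1}$, so
\begin{align*}
(\mathbf{M}_{\theta_1}^\top + tE_2 N_1^\top)^{-1} = \mathbf{M}_{\theta_1}^{-\top} - t Z_{\theta_1}\bigl(I + tN_1^\top Z_{\theta_1}\bigr)^{-1} N_1^\top \mathbf{M}_{\theta_1}^{-\top}.
\end{align*}
Applying this to $-(\mathbf{a}_{\theta_1} + t E_2 b_1)$ and using $-\mathbf{M}_{\theta_1}^{-\top}\mathbf{a}_{\theta_1} = \mathbf{u}_{\theta_1}$ throughout, the expression collapses to
\begin{align*}
\mathbf{u}_\theta = \mathbf{u}_{\theta_1} - t\,Z_{\theta_1}b_1 - tZ_{\theta_1}\bigl(I + tN_1^\top Z_{\theta_1}\bigr)^{-1}\bigl(Z_{\theta_1} N_1^\top \mathbf{u}_{\theta_1} \text{ term}\bigr) + \text{corrections}.
\end{align*}

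The decisive step is the push-through identity $Z_{\theta_1}(I + tN_1^\top Z_{\theta_1})^{-1} = (I + tZ_{\theta_1}N_1^\top)^{-1}Z_{\theta_1}$, combined with the scalar rearrangement $t(I + tZ_{\theta_1}N_1^\top)^{-1} = (I/t + Z_{\theta_1}N_1^\top)^{-1} = G_{\theta_1}(t)$. To absorb the stray $-tZ_{\theta_1}b_1$ term, I rewrite it as $-G_{\theta_1}(t)\bigl(G_{\theta_1}(t)^{-1}\bigr)tZ_{\theta_1}b_1 = -G_{\theta_1}(t)\bigl(Z_{\theta_1} + tZ_{\theta_1}N_1^\top Z_{\theta_1}\bigr)b_1$; the unwanted $tZ_{\theta_1}N_1^\top Z_{\theta_1}b_1$ piece cancels against the $b_1$-dependent correction produced by Woodbury, leaving cleanly
\begin{align*}
\mathbf{u}_\theta = \mathbf{u}_{\theta_1} - G_{\theta_1}(t)\bigl[Z_{\theta_1}b_1 + Z_{\theta_1}N_1^\top \mathbf{u}_{\theta_1}\bigr],
\end{align*}
which is the claimed formula. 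The eigenvalue expansion of $G_{\theta_1}(t)$ then follows from diagonalizing $Z_{\theta_1}N_1^\top$ and reading off $\operatorname{spec}\bigl(G_{\theta_1}(t)\bigr) = \{(1/t + \lambda_{\theta_1 i})^{-1}\}$.

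The main obstacle is the algebraic bookkeeping in the third step: the Woodbury expansion produces four terms (two from each of $\mathbf{a}_{\theta_1}$ and $tE_2 b_1$), and the combination $Z_{\theta_1}b_1 + Z_{\theta_1}N_1^\top \mathbf{u}_{\theta_1}$ only emerges after the push-through identity and the $G_{\theta_1}(t)^{-1}$ multiply-and-cancel trick are used together. A secondary subtlety is that $Z_{\theta_1}N_1^\top$ is $d\times d$ with rank at most $d_2$, so the spectral representation contains $d-d_2$ zero eigenvalues that contribute benign factors of $t$ to $G_{\theta_1}(t)$; this should be flagged but does not disturb the identity. (I also note that the paper's displayed matrix $N_1 Z_{\theta_2}^\top$ in the proposition appears to be a typographical slip for $Z_{\theta_1}N_1^\top$, whose nonzero spectrum coincides with that of $N_1^\top Z_{\theta_1}$.)
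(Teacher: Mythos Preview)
Your argument is correct and is essentially the paper's own approach: both invert the perturbed system via a Woodbury-type identity. The paper packages the computation as a short general lemma for $u^\top = -(a^\top + t\,b^\top)(A+tB)^{-1}$, applied with $A=\mathbf{M}_{\theta_1}$ and the full $d\times d$ (rank-$d_2$) perturbation $B=N_1E_2^\top$; since $BA^{-1}=N_1Z_{\theta_1}^\top$, the $d\times d$ resolvent $(\tfrac{1}{t}I+BA^{-1})^{-1}$---whose transpose is exactly $G_{\theta_1}(t)$---appears in one step, so the paper's route bypasses your intermediate $d_2\times d_2$ inverse, the push-through identity, and the stray-term absorption trick.
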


\begin{figure}
    \centering
\includegraphics[width=0.65\textwidth]
{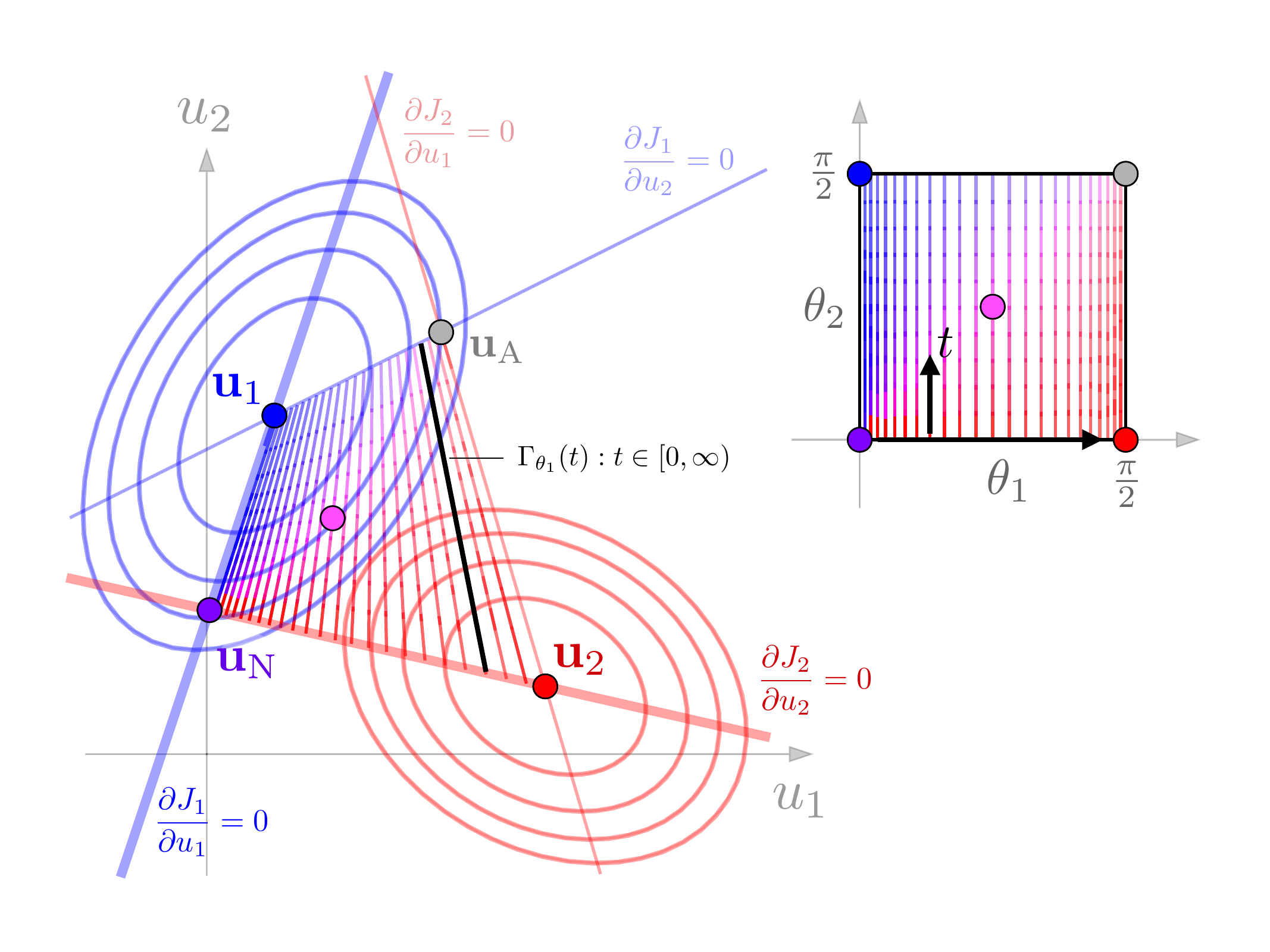}
\caption{(\textbf{E3}) $\theta_1$-Expansion: The curves $\Gamma_{\theta_1}(t):t \in [0,\infty)$ for $\theta_1 \in [0,\pi/2]$ are shown in color sweeping from 
    the point $\mathbf{u}_{\theta_1}$
    (Parameter values are given in App. \ref{app:values})
    }
    \label{fig:theta1exp}
\end{figure}
In the above proposition, we have used the following characterization of the spectrum of $G_{\theta_1}(t)$ as 
\begin{align*}
\text{spec}\Big(G_{\theta_1}(t)\Big)
= \Big\{ \big(\tfrac{1}{t} + \lambda_{i \theta_1}\big)^{-1} \ \Big| \ \lambda_{i\theta_1} \in \text{spec}\big(\Lambda_{\theta_1}\big)\Big\}
\label{eq:exp3spec}
\end{align*}
This expansion is illustrated in Fig. \ref{fig:theta1exp}.

\begin{figure}
    \centering
\includegraphics[width=0.65\textwidth]
{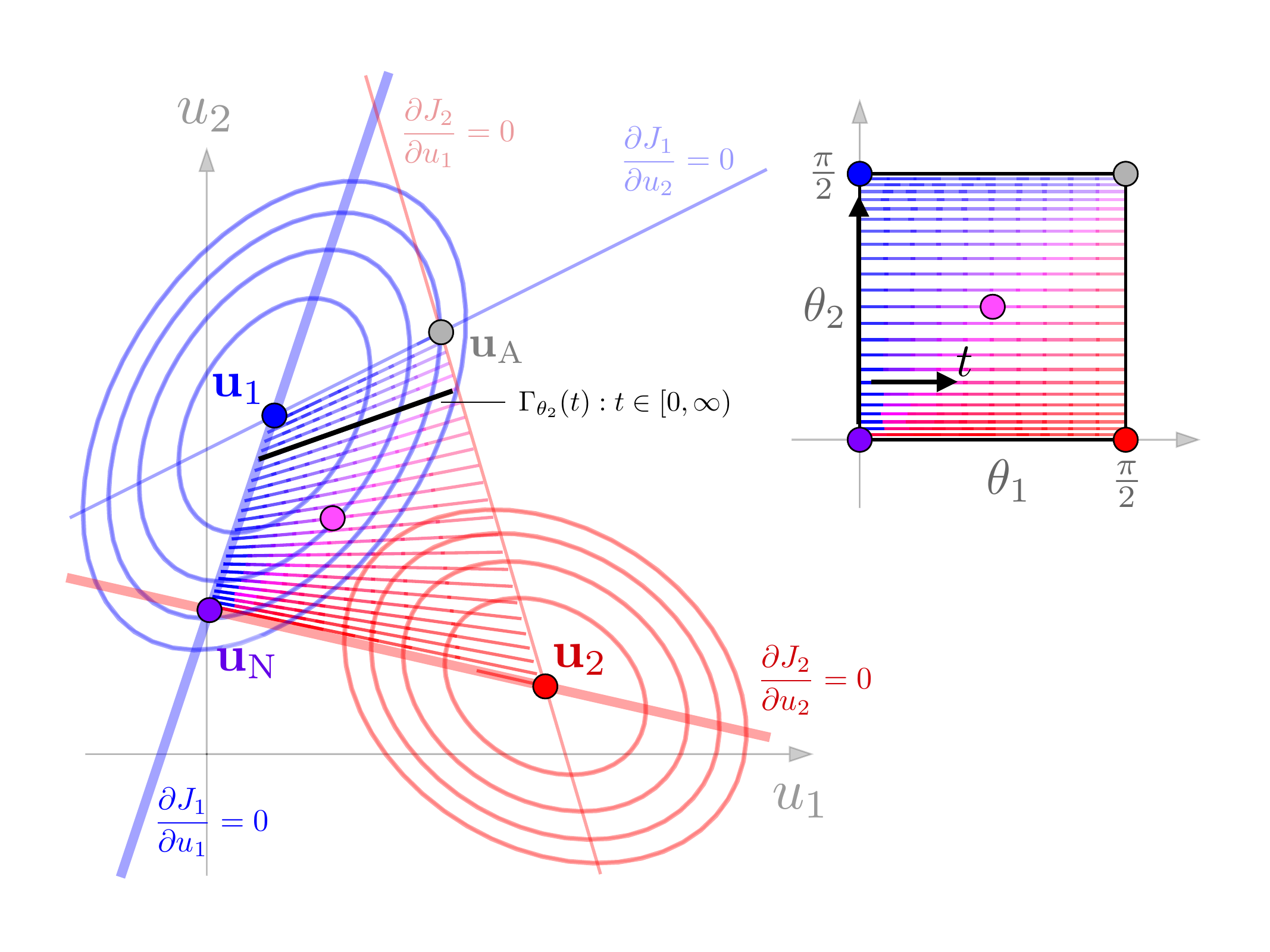}
    \caption{(\textbf{E4}) $\theta_2$-Expansion: The curves $\Gamma_{\theta_2}(t):t \in [0,\infty)$ for $\theta_1 \in [0,\pi/2]$ are shown in color sweeping from 
    the point $\mathbf{u}_{\theta_2}$
    (Parameter values are given in App. \ref{app:values})
    }
    \label{fig:theta2exp}
\end{figure}

Similarly, the fourth expansion can be given as follows 
\begin{proposition}[$\theta_2$-Expansion (\textbf{E4})]
\label{prop:theta2exp}
The SVO-equilibria $\mathbf{u}_\theta$ is given by 
\begin{align}
\mathbf{u}_\theta & = 
\mathbf{u}_{\theta_2} - G_{\theta_2}(t)
\Big[
Z_{\theta_2}b_2 +
Z_{\theta_2}N_2^\top 
\mathbf{u}_{\theta_2}
\Big] \notag 
\end{align}
where for $(\theta_1,t) \in (0,\pi/2) \times (0,\infty)$
\begin{align*}
G_{\theta_2}(t) 
& = 
\Big(\tfrac{1}{t}I + 
Z_{\theta_2}N_2^\top \Big)^{-1}
= 
\sum_i W_{i\theta_2}
\left(\tfrac{1}{t}+\lambda_{i\theta_2}\right)^{\text{-}1}
V_{i\theta_2}^\top 
\end{align*}
where $\big[N_2Z_{\theta_2}^\top\big]$ has eigenvalues $
\Lambda_{\theta_2} = 
\{\lambda_{\theta_2 i}\}_{i=1}^d$, right-eigenvectors, 
$\{V_{\theta_2 i} \}_{i=1}^d$ and left-eigenvectors 
$\{W_{\theta_2 i}^\top \}_{i=1}^d$.
\end{proposition}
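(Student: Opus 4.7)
The proof of Proposition~\ref{prop:theta2exp} parallels that of Proposition~\ref{prop:theta1exp} with the roles of the two players swapped. At a high level, \textbf{E4} can be viewed as the image of \textbf{E3} under the symmetry exchanging the two block components of $u$ and the labels $1\leftrightarrow 2$: this relabeling sends $\mathbf{M}_{\theta_1}\to\mathbf{M}_{\theta_2}$, $Z_{\theta_1}\to Z_{\theta_2}$, $N_1\to N_2$, $b_1\to b_2$, and $\mathbf{u}_{\theta_1}\to\mathbf{u}_{\theta_2}$, so the stated identity is the image of the \textbf{E3} identity under the swap. Accordingly, I would either invoke Proposition~\ref{prop:theta1exp} through this symmetry or re-run the argument directly with roles interchanged.

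For the direct derivation, I would start from $\mathbf{u}_\theta^\top\mathbf{M}_\theta = -\mathbf{a}_\theta^\top$ and use the \textbf{E4} coordinate $\tan\theta_1 = t$ to pull $\cos\theta_1$ out of the first block-column of $\mathbf{M}_\theta$ and the first block of $\mathbf{a}_\theta$. Writing $\tilde M_1$ for the first block-column of $\mathbf{M}_{\theta_2}$ and $E$ for the $d\times d_1$ matrix embedding the first $d_1$ coordinates, this yields the rank-$d_1$ decomposition
$$\mathbf{M}_\theta \,=\, \mathbf{M}_{\theta_2} + \bigl[c\theta_1(\tilde M_1 + tN_2) - \tilde M_1\bigr]E^\top, \qquad \mathbf{a}_\theta \,=\, \mathbf{a}_{\theta_2} + E\bigl[c\theta_1(a_1 + tb_2) - a_1\bigr].$$
Substituting into the SVO-equilibrium condition and cancelling $\mathbf{u}_{\theta_2}^\top\mathbf{M}_{\theta_2} = -\mathbf{a}_{\theta_2}^\top$ leaves $\mathbf{u}_\theta^\top = \mathbf{u}_{\theta_2}^\top - w^\top Z_{\theta_2}^\top$ for an auxiliary vector $w\in\mathbb{R}^{d_1}$, where $Z_{\theta_2}^\top = E^\top\mathbf{M}_{\theta_2}^{-1}$ by construction, and a Sherman--Morrison-type linear system in $w$ arises from substituting this expression back into the residual.

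Two identities then drive the collapse to the stated form. First, $Z_{\theta_2}^\top\tilde M_1 = E^\top\mathbf{M}_{\theta_2}^{-1}\mathbf{M}_{\theta_2}E = I_{d_1}$, which lets the $-\tilde M_1$ and $-a_1$ shift terms telescope against the first-block identity $\mathbf{u}_{\theta_2}^\top\tilde M_1 = -a_1^\top$ of the reduced equilibrium; a common factor of $c\theta_1 t$ then divides off both sides, reducing the equation for $w$ to $(\tfrac{1}{t}I + N_2^\top Z_{\theta_2})\,w = b_2 + N_2^\top\mathbf{u}_{\theta_2}$. Second, the push-through identity $Z_{\theta_2}(\tfrac{1}{t}I + N_2^\top Z_{\theta_2})^{-1} = (\tfrac{1}{t}I + Z_{\theta_2}N_2^\top)^{-1}Z_{\theta_2}$ converts the $d_1\times d_1$ inner inverse into the $d\times d$ operator $G_{\theta_2}(t)$, giving $\mathbf{u}_\theta = \mathbf{u}_{\theta_2} - G_{\theta_2}(t)\bigl[Z_{\theta_2}b_2 + Z_{\theta_2}N_2^\top\mathbf{u}_{\theta_2}\bigr]$. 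The eigenvalue-sum representation of $G_{\theta_2}(t)$ then follows by diagonalizing $N_2Z_{\theta_2}^\top$, whose transpose $Z_{\theta_2}N_2^\top$ shares its spectrum with left and right eigenvectors swapped.

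The main technical obstacle is the bookkeeping in this rank-$d_1$ telescoping: one must track carefully how the constant shift $-\tilde M_1E^\top$ interacts with the $t$-dependent piece $c\theta_1(\tilde M_1 + tN_2)E^\top$, and verify that the scalar $c\theta_1 t$ divides cleanly on both sides so that the inverse appears in exactly the form $(\tfrac{1}{t}I + Z_{\theta_2}N_2^\top)^{-1}$. Once the key identity $Z_{\theta_2}^\top\tilde M_1 = I_{d_1}$ is in place, the remaining manipulations amount to the same bookkeeping exercise as in \textbf{E3}.
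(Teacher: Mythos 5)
Your proposal is correct and follows essentially the same route as the paper: the paper proves \textbf{E3}/\textbf{E4} by a general Woodbury-type rearrangement of $-\big[a^\top + t\,b^\top\big](A+Bt)^{-1}$ (Lemma in Appendix D) and then applies it with $A=\mathbf{M}_{\theta_2}$, the low-rank perturbation $B=N_2E^\top$, and $b=Eb_2$, which is exactly the decomposition and push-through manipulation you carry out. The only cosmetic difference is that you reduce to a $d_1$-dimensional auxiliary system in $w$ before pushing through to the $d\times d$ operator $G_{\theta_2}(t)$, whereas the paper works with the $d\times d$ inverse throughout; your key identities $Z_{\theta_2}^\top\tilde M_1 = I_{d_1}$ and $Z_{\theta_2}\big(\tfrac{1}{t}I+N_2^\top Z_{\theta_2}\big)^{-1}=\big(\tfrac{1}{t}I+Z_{\theta_2}N_2^\top\big)^{-1}Z_{\theta_2}$ both check out.
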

In the above proposition, we have used the following characterization of the spectrum of $G_{\theta_2}(t)$ as 
\begin{align}
\text{spec}\Big(G_{\theta_2}(t)\Big)
= \Big\{ \big(\tfrac{1}{t} + \lambda_{i \theta_2}\big)^{-1} \ \Big| \ \lambda_{i\theta_2} \in \text{spec}\big(\Lambda_{\theta_2}\big)\Big\}
\end{align}

This expansion is illustrated in Fig. \ref{fig:theta2exp}.

\subsection{Proof of Expansions 1 \& 2}
\label{app:proofs}
The proofs of Expansions 1 \& 2 in Props. \ref{prop:nashexp} and \ref{prop:playerexp} follow parallel derivations and thus we can prove them together via the following lemma.  Other than the following lemma, the remainder of the equations are simple applications of the spectral mapping theorem.  

\begin{lemma}
\label{lem:expansions12}
Let 
\begin{align*}
u,a,b & \in \mathbb{R}^d, \ \ 
A,B \in \mathbb{R}^{d \times d}, \ \ 
H = \textbf{blkdg}\big(w_1 I, w_2 I\big) \\
\text{with} \quad 
u^\top & = - \big[a^\top + b^\top H \big]
\Big(A+BH\Big)^{-1}
\end{align*}
It follows that  
\begin{align}
u^\top 
& = u_a^\top + \big[u_b^\top  - u_a^\top \big] \big(AH^{-1}B^{-1}+I\big)^{-1} 
\end{align}
with $u_a^\top = -a^\top A^{-1}$ and $u_b^\top = -b^\top B^{-1}$.
\end{lemma}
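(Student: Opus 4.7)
The plan is to prove Lemma~\ref{lem:expansions12} by direct algebraic manipulation, factoring the matrix $(A+BH)^{-1}$ in two different ways to expose both $u_a$ and $u_b$ simultaneously. The key structural fact is that $H = \textbf{blkdg}(w_1 I, w_2 I)$ commutes with any conformally block-partitioned operation, and more importantly, we may right-factor $H$ cleanly from the inverse.

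First I would pull $H$ out on the right of $A + BH$. Writing $A + BH = (AH^{-1} + B)H$, we obtain $(A+BH)^{-1} = H^{-1}(AH^{-1}+B)^{-1}$, and the vector part simplifies as $[a^\top + b^\top H]H^{-1} = a^\top H^{-1} + b^\top$. This gives the compact intermediate form
\begin{align*}
u^\top = -[a^\top H^{-1} + b^\top](AH^{-1} + B)^{-1}.
\end{align*}
Next, I would introduce the matrix $G := (AH^{-1}B^{-1} + I)^{-1}$ appearing in the target formula. The identity $(AH^{-1}B^{-1} + I) B = AH^{-1} + B$ immediately yields $(AH^{-1}+B)^{-1} = B^{-1}G$. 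Substituting produces
\begin{align*}
u^\top = -\bigl[a^\top H^{-1}B^{-1} + b^\top B^{-1}\bigr] G.
\end{align*}

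Finally, I would recognize the two pieces as weighted combinations of $u_a$ and $u_b$. For the $b$-term this is immediate: $-b^\top B^{-1} G = u_b^\top G$. For the $a$-term I would use the ``resolvent'' identity $I - G = AH^{-1}B^{-1}\,G$, which follows from $G^{-1} - I = AH^{-1}B^{-1}$, to write
\begin{align*}
-a^\top H^{-1} B^{-1} G = -a^\top A^{-1} (A H^{-1} B^{-1}) G = -a^\top A^{-1} (I - G) = u_a^\top (I - G).
\end{align*}
Adding the two pieces yields $u^\top = u_a^\top + (u_b^\top - u_a^\top) G$, which is the desired identity.

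The main potential obstacle is purely bookkeeping: one must be careful about order of multiplication (none of $A$, $B$, $H$ generally commute), and one must keep track of where inverses exist. Invertibility of $H$ follows from the nonzero weights $w_1, w_2$ in the intended application (the $\cos\phi, \sin\phi$ and $\cos\psi, \sin\psi$ of Prop.~\ref{prop:coords}), while invertibility of $A$, $B$ and $A+BH$ is part of the standing assumptions on $M_1,M_2,\mathbf{M},\mathbf{N}$ used in Props.~\ref{prop:nashexp} and~\ref{prop:playerexp}. Once the identity $u^\top = u_a^\top + (u_b^\top - u_a^\top)G$ is established, the two propositions follow immediately by specializing $(a,b,A,B,H)$ to either $(\mathbf{a},\mathbf{b},\mathbf{M},\mathbf{N},H_\phi)$ or $(c_1,c_2,M_1,M_2,H_\psi)$, and then applying the spectral mapping theorem to $G = (t^{-1} A H^{-1} B^{-1} + I)^{-1}$ to obtain the eigenvalue characterization in~\eqref{eq:exp1sum} and~\eqref{eq:exp2sum}.
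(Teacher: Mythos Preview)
Your proof is correct. It differs slightly from the paper's argument in the order of operations: the paper splits the row vector into the $a^\top$ and $b^\top H$ pieces \emph{first} and then applies the Woodbury identity $(A+BH)^{-1}=A^{-1}-A^{-1}B(H^{-1}+A^{-1}B)^{-1}A^{-1}$ to the $a$-part while directly factoring $H(A+BH)^{-1}=B^{-1}G$ for the $b$-part; you instead factor $(A+BH)^{-1}=H^{-1}B^{-1}G$ uniformly up front and then recover the $a$-contribution via the resolvent identity $I-G=AH^{-1}B^{-1}G$. Your route is marginally more elementary in that it never invokes Woodbury explicitly, only one-sided factorizations and the algebraic identity $G^{-1}-I=AH^{-1}B^{-1}$; the paper's route has the small advantage that the ``alternative'' expansion (swapping the roles of $A$ and $B$, used later for the second ellipsoid bound in Prop.~\ref{prop:intersectellipses}) falls out symmetrically by applying Woodbury around $B$ instead of $A$. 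Either way the computations are equivalent and the invertibility caveats you flag are exactly the ones the paper relies on.
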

\begin{proof}
\begin{align}
{u}^\top
& = 
-
\Big(a^\top + b^\top H\Big)
\Big(A + BH\Big)^{-1} \\
& = 
-a^\top 
A^{-1}\Bigg[I-B\Big(H^{-1} + A^{-1}B\Big)^{-1}A^{-1}\Bigg]
\\ & \qquad 
- b^\top B^{-1} 
\Big(AH^{-1}B^{-1} + I\Big)^{-1} \\
& = 
-a^\top 
A^{-1}\Bigg[I-\Big(AH^{-1}B^{-1} + I\Big)^{-1}\Bigg]
\\ & \qquad 
- b^\top B^{-1} 
\Big(AH^{-1}B^{-1} + I\Big)^{-1} \\
& = 
-a^\top 
A^{-1} - \Bigg[b^\top B^{-1}-a^\top A^{-1}\Bigg]\Big(AH^{-1}B^{-1} + I\Big)^{-1}
\label{eq:mainproof1}
\end{align}
The alternative expansions are given by writing
\begin{align*}
u^\top
& = -\Big(a^\top + b^\top H\Big)\Big(A+BH\Big)^{-1} \\
& = -\Big(a^\top H^{-1} + b^\top \Big)\Big(A H^{-1}+B\Big)^{-1} \\
& = 
- b^\top B^{-1} - \Big[a^\top A^{-1} - b^\top B^{-1}\Big]
\Big(BHA^{-1} + I \Big)^{-1}
\end{align*}
where the last step follows the same logic as 
in \eqref{eq:mainproof1} swapping the roles of 
\end{proof}

\subsubsection*{Expansions 1 \& 2}

\begin{proof}
The proof of expansion 1 is given by applying Lemma \ref{lem:expansions12} with $w_1 = t\cos \phi$ and $w_2 = t \sin \phi$
\begin{align*}
a = \mathbf{a}, \ \  b = \mathbf{b}, 
\ \ A = \mathbf{M}, \ \ B = \mathbf{N}.
\end{align*}
For Expansion 2, apply Lemma \ref{lem:expansions12} with $w_1 =  t \cos \psi$ and $w_2 =  t  \sin \psi$
\begin{align*}
a = c_1, \ \  b = c_2, 
\ A = M_1, \ B = M_2.
\end{align*}
\end{proof}

\subsection{Proof of Expansions 3 \& 4}
\label{app:proofs34}
 
\begin{lemma}
\label{lem:expansions12}
Let 
\begin{align*}
u,a,b & \in \mathbb{R}^d, \ \ 
A,B \in \mathbb{R}^{d \times d}, 
\ \ 
\\
\text{with} \quad 
u^\top & = - \big[a^\top + t b^\top \big]
\Big(A+B t\Big)^{-1}
`\end{align*}
It follows that  
\begin{align}
u^\top 
& = u_a^\top + \Big[b^\top  - u_a^\top B \Big]A^{-1} \Big(\tfrac{1}{t}+BA^{-1}\Big)^{-1} 
\end{align}
with $u_a^\top = -a^\top A^{-1}$. 
\end{lemma}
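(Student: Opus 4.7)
The plan is to mimic the structure of the previous lemma (for Expansions 1 \& 2) but with the $H$-weighting collapsed to a scalar $t$, and without assuming $B$ is invertible (so I cannot pass to a dual $u_b^\top = -b^\top B^{-1}$). The key substitution is to use only the definition of $u_a^\top = -a^\top A^{-1}$, i.e.\ to replace $a^\top$ by $-u_a^\top A$ inside the original expression for $u^\top$.

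Concretely, I would proceed in four short steps. First, write
$$
u^\top \;=\; -\bigl[a^\top + t\,b^\top\bigr]\bigl(A+tB\bigr)^{-1}
\;=\; \bigl[u_a^\top A - t\,b^\top\bigr]\bigl(A+tB\bigr)^{-1},
$$
using $a^\top = -u_a^\top A$. Second, apply the elementary identity $A(A+tB)^{-1} = I - tB(A+tB)^{-1}$, obtained by right-multiplying $A+tB = A+tB$ by $(A+tB)^{-1}$. This isolates $u_a^\top$ from the remaining terms and gives
$$
u^\top \;=\; u_a^\top \;-\; \bigl[u_a^\top B + b^\top\bigr]\,t\,(A+tB)^{-1}.
$$
Third, rewrite the trailing factor $t(A+tB)^{-1}$ in the form $A^{-1}\bigl(\tfrac{1}{t}I + BA^{-1}\bigr)^{-1}$. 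This is the one-line identity
$$
\tfrac{1}{t}I + BA^{-1} \;=\; \bigl(\tfrac{1}{t}A + B\bigr)A^{-1} \;=\; \tfrac{1}{t}(A+tB)A^{-1},
$$
so that inverting and premultiplying by $A^{-1}$ yields $A^{-1}\bigl(\tfrac{1}{t}I + BA^{-1}\bigr)^{-1} = t(A+tB)^{-1}$. Fourth, substitute back to match the claimed right-hand side (up to the convention for the sign in the bracket, which the scalar case $d=1$ pins down).

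The hard part is not really hard: it is just keeping the signs straight and recognizing that the reformulation of $t(A+tB)^{-1}$ is what produces the $(\tfrac{1}{t}I + BA^{-1})^{-1}$ factor appearing in $G_{\theta_1}(t)$ and $G_{\theta_2}(t)$. Once this algebraic identity is set up, the derivations of Propositions~\ref{prop:theta1exp} and~\ref{prop:theta2exp} follow by applying the lemma with $(a,b,A,B)$ taken to be the $(\theta_1,\theta_2)$-dependent block data $(\mathbf{a}_{\theta_1},N_1,\mathbf{M}_{\theta_1},\cdot)$ and its $\theta_2$-analogue, exactly as the parallel lemma was used in the proofs of Propositions~\ref{prop:nashexp} and~\ref{prop:playerexp}, together with a standard spectral mapping argument to obtain the eigenvalue expression $(\tfrac{1}{t}+\lambda_{i\theta_j})^{-1}$ for the spectrum of $G_{\theta_j}(t)$.
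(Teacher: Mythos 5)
Your proposal is correct and follows essentially the same route as the paper: both isolate $u_a^\top$ and convert the remaining factor $t(A+tB)^{-1}$ into $A^{-1}\big(\tfrac{1}{t}I+BA^{-1}\big)^{-1}$ (the paper invokes the Woodbury identity for the $a^\top$ term where you use the equivalent one-line identity $A(A+tB)^{-1}=I-tB(A+tB)^{-1}$, but the algebra is the same). Your parenthetical about the sign is well taken: the scalar check shows the correct bracket is $-\big[b^\top + u_a^\top B\big]A^{-1}\big(\tfrac{1}{t}I+BA^{-1}\big)^{-1}$, which is exactly what the paper's own proof derives in its final line, so the $+\big[b^\top - u_a^\top B\big]$ appearing in the lemma statement is a typo rather than a flaw in your argument.
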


\begin{proof}
\begin{align*}
{u}^\top
& = 
-
\Big(a^\top + b^\top t\Big)
\Big(A + Bt\Big)^{-1} \\
& = 
-a^\top 
A^{-1}\Bigg[I-B\Big(\tfrac{1}{t}I + A^{-1}B\Big)^{-1}A^{-1}\Bigg]
\\ & \qquad 
- b^\top t
\Big(A+Bt\Big)^{-1} \\
& = 
-a^\top 
A^{-1}\Bigg[I-BA^{-1}\Big(\tfrac{1}{t}I + BA^{-1}\Big)^{-1}\Bigg]
\\ & \qquad 
- b^\top A^{-1}
\Big(\tfrac{1}{t}+BA^{-1}\Big)^{-1} \\
& = 
u_a^\top - \Big[b^\top A^{-1} + u_a^\top BA^{-1}\Big]
\Big(\tfrac{1}{t}+BA^{-1}\Big)^{-1} 
\end{align*}
\end{proof}

\subsubsection*{Expansions 3 \& 4}

\begin{proof}
For Expansion 1, apply Lemma \ref{lem:expansions12} with $w_1 = t\cos \phi$ and $w_2 = t \sin \phi$
\begin{align*}
a = \mathbf{a}, \ \  b = \mathbf{b}, 
\ \ A = \mathbf{M}, \ \ B = \mathbf{N}.
\end{align*}
For Expansion 2, apply Lemma \ref{lem:expansions12} with $w_1 =  t \cos \psi$ and $w_2 =  t  \sin \psi$
\begin{align*}
a = c_1, \ \  b = c_2, 
\ A = M_1, \ B = M_2.
\end{align*}
\end{proof}

\subsection{Example Parameters \& Constructions}
\label{app:values}

\subsubsection{2D \& 3D Examples}
\label{app:2Dn3Dexamples}
The 2D and 3D example parameters are given in the following form for visualization purposes. 
\begin{align*}
J_i = \tfrac{1}{2}\big(u-\mathbf{u}_i\big)^\top M_i \big(u - \mathbf{u}_i \big)
\end{align*}
The parameters in Figs. 
\ref{fig:nashillustration},
\ref{fig:nashexp}, 
\ref{fig:playerexp}, \ref{fig:theta1exp}, \ref{fig:theta2exp}, 
\ref{fig:balls}
\ref{fig:ellipsesNA12}, and 
\ref{fig:ellipsessamples}
are given by 
\begin{align*}
M_1 =
\begin{bmatrix}
3 & -1 \\ -1 & 2 
\end{bmatrix}, \ \ 
M_2 = 
\begin{bmatrix}
1.7  & 0.5 \\ 0.5 & 2.2    
\end{bmatrix}
\end{align*}
with $
\mathbf{u}_1 = (0.2, \ 1.0)$  and $\mathbf{u}_2 = 
(1.0, \ 0.2)$.   
For the 3D example in Fig. \ref{fig:3dexample},
\begin{align*}
M_1 =
\begin{bmatrix}
8.75 & 2.0 & 0.5 \\
2.0 & 3.75 & 1.0 \\
0.5 & 1.0 & 3.75 
\end{bmatrix}, \ \ 
M_2 = 
\begin{bmatrix}
1.8 & -1.6 &  0.0  \\
 -1.6 &  3.0 &  -2.8 \\
  0.0 &  -2.8 & 7.0 
\end{bmatrix}
\end{align*}
with 
$\mathbf{u}_1 = (-0.4, \ 0.2, \ 0.4)$ 
and
$\mathbf{u}_2 = (0.5, \ -0.4, \ 0.0)$
and $u^\top=[u_1^\top \ u_2^\top]$ with $u_1 \in \mathbb{R}^2$ and $u_2 \in \mathbb{R}$
(Parameters for Figs. \ref{fig:2x2bounded} and \ref{fig:2x2blowups} as well as the LQ examples in  Fig.
\ref{fig:lqexamples2}
\ref{fig:lqexamples1shexpansion} are given in the text.)



\bibliographystyle{plainnat}
\bibliography{refs}

\begin{thebibliography}{31}
\providecommand{\natexlab}[1]{#1}
\providecommand{\url}[1]{\texttt{#1}}
\expandafter\ifx\csname urlstyle\endcsname\relax
  \providecommand{\doi}[1]{doi: #1}\else
  \providecommand{\doi}{doi: \begingroup \urlstyle{rm}\Url}\fi

\bibitem[An et~al.(2023)An, Wang, Liu, Li, and Dong]{an2023cooperative}
Tianjiao An, Yuexi Wang, Guangjun Liu, Yuanchun Li, and Bo~Dong.
\newblock Cooperative game-based approximate optimal control of modular robot manipulators for human--robot collaboration.
\newblock \emph{IEEE Transactions on Cybernetics}, 53\penalty0 (7):\penalty0 4691--4703, 2023.

\bibitem[Bajcsy et~al.(2017)Bajcsy, Losey, O'malley, and Dragan]{bajcsy2017learning}
Andrea Bajcsy, Dylan~P Losey, Marcia~K O'malley, and Anca~D Dragan.
\newblock Learning robot objectives from physical human interaction.
\newblock In \emph{Conference on robot learning}, pages 217--226. PMLR, 2017.

\bibitem[Ba{\c{s}}ar and Olsder(1998)]{bacsar1998dynamic}
Tamer Ba{\c{s}}ar and Geert~Jan Olsder.
\newblock \emph{Dynamic noncooperative game theory}.
\newblock SIAM, 1998.

\bibitem[Bosch(1986)]{bosch1986factorization}
AJ~Bosch.
\newblock The factorization of a square matrix into two symmetric matrices.
\newblock \emph{The American Mathematical Monthly}, 93\penalty0 (6):\penalty0 462--464, 1986.

\bibitem[Brown et~al.(2024)Brown, Bu, Mandel, and Ju]{brown2024trash}
Barry Brown, Fanjun Bu, Ilan Mandel, and Wendy Ju.
\newblock Trash in motion: Emergent interactions with a robotic trashcan.
\newblock In \emph{Proceedings of the CHI Conference on Human Factors in Computing Systems}, pages 1--17, 2024.

\bibitem[Busch et~al.(2017)Busch, Grizou, Lopes, and Stulp]{busch2017learning}
Baptiste Busch, Jonathan Grizou, Manuel Lopes, and Freek Stulp.
\newblock Learning legible motion from human--robot interactions.
\newblock \emph{International Journal of Social Robotics}, 9\penalty0 (5):\penalty0 765--779, 2017.

\bibitem[Chackochan and Sanguineti(2017)]{chackochan2017modelling}
Vinil~Thekkedath Chackochan and Vittorio Sanguineti.
\newblock Modelling collaborative strategies in physical human-human interaction.
\newblock In \emph{Converging Clinical and Engineering Research on Neurorehabilitation II: Proceedings of the 3rd International Conference on NeuroRehabilitation (ICNR2016), October 18-21, 2016, Segovia, Spain}, pages 253--258. Springer, 2017.

\bibitem[Dragan and Srinivasa(2014)]{dragan2014integrating}
Anca Dragan and Siddhartha Srinivasa.
\newblock Integrating human observer inferences into robot motion planning.
\newblock \emph{Autonomous Robots}, 37:\penalty0 351--368, 2014.

\bibitem[Dragan et~al.(2015{\natexlab{a}})Dragan, Holladay, and Srinivasa]{dragan2015deceptive}
Anca Dragan, Rachel Holladay, and Siddhartha Srinivasa.
\newblock Deceptive robot motion: synthesis, analysis and experiments.
\newblock \emph{Autonomous Robots}, 39:\penalty0 331--345, 2015{\natexlab{a}}.

\bibitem[Dragan et~al.(2013)Dragan, Lee, and Srinivasa]{dragan2013legibility}
Anca~D Dragan, Kenton~CT Lee, and Siddhartha~S Srinivasa.
\newblock Legibility and predictability of robot motion.
\newblock In \emph{2013 8th ACM/IEEE International Conference on Human-Robot Interaction (HRI)}, pages 301--308. IEEE, 2013.

\bibitem[Dragan et~al.(2015{\natexlab{b}})Dragan, Bauman, Forlizzi, and Srinivasa]{dragan2015effects}
Anca~D Dragan, Shira Bauman, Jodi Forlizzi, and Siddhartha~S Srinivasa.
\newblock Effects of robot motion on human-robot collaboration.
\newblock In \emph{Proceedings of the tenth annual ACM/IEEE international conference on human-robot interaction}, pages 51--58, 2015{\natexlab{b}}.

\bibitem[Driggs-Campbell and Bajcsy(2016)]{driggs2016communicating}
Katherine Driggs-Campbell and Ruzena Bajcsy.
\newblock Communicating intent on the road through human-inspired control schemes.
\newblock In \emph{2016 IEEE/RSJ International Conference on Intelligent Robots and Systems (IROS)}, pages 3042--3047. IEEE, 2016.

\bibitem[Driggs-Campbell et~al.(2017)Driggs-Campbell, Govindarajan, and Bajcsy]{driggs2017integrating}
Katherine Driggs-Campbell, Vijay Govindarajan, and Ruzena Bajcsy.
\newblock Integrating intuitive driver models in autonomous planning for interactive maneuvers.
\newblock \emph{IEEE Transactions on Intelligent Transportation Systems}, 18\penalty0 (12):\penalty0 3461--3472, 2017.

\bibitem[Frihauf et~al.(2011)Frihauf, Krstic, and Basar]{frihauf2011nash}
Paul Frihauf, Miroslav Krstic, and Tamer Basar.
\newblock Nash equilibrium seeking in noncooperative games.
\newblock \emph{IEEE Transactions on Automatic Control}, 57\penalty0 (5):\penalty0 1192--1207, 2011.

\bibitem[Klein et~al.(2024)Klein, Sandhaus, Goedicke, Ju, and Parush]{klein2024modeling}
Navit Klein, Hauke Sandhaus, David Goedicke, Wendy Ju, and Avi Parush.
\newblock Modeling social situation awareness in driving interactions.
\newblock In \emph{Proceedings of the 16th International Conference on Automotive User Interfaces and Interactive Vehicular Applications}, pages 259--271, 2024.

\bibitem[Kruse et~al.(2012)Kruse, Basili, Glasauer, and Kirsch]{kruse2012legible}
Thibault Kruse, Patrizia Basili, Stefan Glasauer, and Alexandra Kirsch.
\newblock Legible robot navigation in the proximity of moving humans.
\newblock In \emph{2012 IEEE workshop on advanced robotics and its social impacts (ARSO)}, pages 83--88. IEEE, 2012.

\bibitem[Lambert et~al.(2018)Lambert, Martini, and Ostrovsky]{lambert2018quadratic}
Nicolas~S Lambert, Giorgio Martini, and Michael Ostrovsky.
\newblock Quadratic games.
\newblock Technical report, National Bureau of Economic Research, 2018.

\bibitem[Li et~al.(2016)Li, Tee, Yan, Chan, and Wu]{li2016framework}
Yanan Li, Keng~Peng Tee, Rui Yan, Wei~Liang Chan, and Yan Wu.
\newblock A framework of human--robot coordination based on game theory and policy iteration.
\newblock \emph{IEEE Transactions on Robotics}, 32\penalty0 (6):\penalty0 1408--1418, 2016.

\bibitem[Liebrand and McClintock(1988)]{liebrand1988ring}
Wim~BG Liebrand and Charles~G McClintock.
\newblock The ring measure of social values: A computerized procedure for assessing individual differences in information processing and social value orientation.
\newblock \emph{European journal of personality}, 2\penalty0 (3):\penalty0 217--230, 1988.

\bibitem[Lindig-Le{\'o}n et~al.(2021)Lindig-Le{\'o}n, Schmid, and Braun]{lindig2021nash}
Cecilia Lindig-Le{\'o}n, Gerrit Schmid, and Daniel~A Braun.
\newblock Nash equilibria in human sensorimotor interactions explained by q-learning with intrinsic costs.
\newblock \emph{Scientific Reports}, 11\penalty0 (1):\penalty0 20779, 2021.

\bibitem[Liniger and Lygeros(2019)]{liniger2019noncooperative}
Alexander Liniger and John Lygeros.
\newblock A noncooperative game approach to autonomous racing.
\newblock \emph{IEEE Transactions on Control Systems Technology}, 28\penalty0 (3):\penalty0 884--897, 2019.

\bibitem[McClintock and Allison(1989)]{mcclintock1989social}
Charles~G McClintock and Scott~T Allison.
\newblock Social value orientation and helping behavior 1.
\newblock \emph{Journal of Applied Social Psychology}, 19\penalty0 (4):\penalty0 353--362, 1989.

\bibitem[Musi{\'c} and Hirche(2020)]{music2020haptic}
Selma Musi{\'c} and Sandra Hirche.
\newblock Haptic shared control for human-robot collaboration: A game-theoretical approach.
\newblock \emph{IFAC-PapersOnLine}, 53\penalty0 (2):\penalty0 10216--10222, 2020.

\bibitem[Nash et~al.(1950)]{nash1950non}
John~F Nash et~al.
\newblock Non-cooperative games.
\newblock 1950.

\bibitem[Sadigh et~al.(2016)Sadigh, Sastry, Seshia, and Dragan]{sadigh2016planning}
Dorsa Sadigh, Shankar Sastry, Sanjit~A Seshia, and Anca~D Dragan.
\newblock Planning for autonomous cars that leverage effects on human actions.
\newblock In \emph{Robotics: Science and systems}, volume~2, pages 1--9. Ann Arbor, MI, USA, 2016.

\bibitem[Sadigh et~al.(2018)Sadigh, Landolfi, Sastry, Seshia, and Dragan]{sadigh2018planning}
Dorsa Sadigh, Nick Landolfi, Shankar~S Sastry, Sanjit~A Seshia, and Anca~D Dragan.
\newblock Planning for cars that coordinate with people: leveraging effects on human actions for planning and active information gathering over human internal state.
\newblock \emph{Autonomous Robots}, 42:\penalty0 1405--1426, 2018.

\bibitem[Schwarting et~al.(2019)Schwarting, Pierson, Alonso-Mora, Karaman, and Rus]{schwarting2019social}
Wilko Schwarting, Alyssa Pierson, Javier Alonso-Mora, Sertac Karaman, and Daniela Rus.
\newblock Social behavior for autonomous vehicles.
\newblock \emph{Proceedings of the National Academy of Sciences}, 116\penalty0 (50):\penalty0 24972--24978, 2019.

\bibitem[Tian et~al.(2022)Tian, Sun, Bajcsy, Tomizuka, and Dragan]{tian2022safety}
Ran Tian, Liting Sun, Andrea Bajcsy, Masayoshi Tomizuka, and Anca~D Dragan.
\newblock Safety assurances for human-robot interaction via confidence-aware game-theoretic human models.
\newblock In \emph{2022 International Conference on Robotics and Automation (ICRA)}, pages 11229--11235. IEEE, 2022.

\bibitem[Toghi et~al.(2021)Toghi, Valiente, Sadigh, Pedarsani, and Fallah]{toghi2021cooperative}
Behrad Toghi, Rodolfo Valiente, Dorsa Sadigh, Ramtin Pedarsani, and Yaser~P Fallah.
\newblock Cooperative autonomous vehicles that sympathize with human drivers.
\newblock In \emph{2021 IEEE/RSJ International Conference on Intelligent Robots and Systems (IROS)}, pages 4517--4524. IEEE, 2021.

\bibitem[Zhu and Borrelli(2024)]{zhu2024sequential}
Edward~L Zhu and Francesco Borrelli.
\newblock A sequential quadratic programming approach to the solution of open-loop generalized nash equilibria for autonomous racing.
\newblock \emph{arXiv preprint arXiv:2404.00186}, 2024.

\bibitem[Zou et~al.(2020)Zou, Liu, Zhao, and Cai]{zou2020framework}
Rui Zou, Yubin Liu, Jie Zhao, and Hegao Cai.
\newblock A framework for human-robot-human physical interaction based on n-player game theory.
\newblock \emph{Sensors}, 20\penalty0 (17):\penalty0 5005, 2020.

\end{thebibliography}

\end{document}